\newtheorem{theorem}{Theorem}[section]
\newtheorem{bigthm}{Theorem}
\newtheorem{lemma}[theorem]{Lemma}
\newtheorem{proposition}[theorem]{Proposition}
\newtheorem{remo}[theorem]{Remark}
\newtheorem{corollary}[theorem]{Corollary}
\newtheorem{definition}[theorem]{Definition}
\theoremstyle{remark}
\newtheorem{example}[theorem]{Example}
\numberwithin{equation}{section}
\def\XXint#1#2#3{{\setbox0=\hbox{$#1{#2#3}{\int}$ }
		\vcenter{\hbox{$#2#3$ }}\kern-.6\wd0}}
\newcommand*\bigcdot{\mathpalette\bigcdot@{.5}}
\newcommand*\bigcdot@[2]{\mathbin{\vcenter{\hbox{\scalebox{#2}{$\m@th#1\bullet$}}}}}
\newcommand{\di}{\operatorname{div}}
\newcommand{\curl}{\operatorname{curl}}
\newcommand{\dif}{\operatorname{d}\!}
\newcommand{\spt}{\operatorname{spt}}
\renewcommand{\geq}{\geqslant}
\newcommand{\id}{\operatorname{Id}}
\newcommand{\lin}{\operatorname{Lin}}
\newcommand{\locc}{\operatorname{loc}}
\renewcommand{\leq}{\leqslant}
\newcommand{\proj}{\mathrm{Proj}}
\newcommand{\rank}{\operatorname{rank}}
\newcommand{\rmim}{\mathrm{im\,}}
\newcommand{\spann}{\operatorname{span}}
\newcommand{\sym}{\operatorname{sym}}
\newcommand{\ww}{\mathbf w}
\newcommand{\A}{\mathscr{A}}
\newcommand{\B}{\mathscr{B}}
\newcommand{\C}{\mathscr{C}}
\newcommand{\EE}{\mathscr{E}}
\newcommand{\F}{\mathcal{F}}
\newcommand{\FT}{\mathscr{F}}
\newcommand{\Le}{\mathscr{L}}
\newcommand{\N}{\mathbb{N}}
\newcommand{\PP}{\mathscr{P}}
\newcommand{\R}{\mathbb{R}}
\DeclareFontFamily{OMX}{MnSymbolE}{}
\DeclareSymbolFont{MnLargeSymbols}{OMX}{MnSymbolE}{m}{n}
\DeclareFontShape{OMX}{MnSymbolE}{m}{n}{
    <-6>  MnSymbolE5
   <6-7>  MnSymbolE6
   <7-8>  MnSymbolE7
   <8-9>  MnSymbolE8
   <9-10> MnSymbolE9
  <10-12> MnSymbolE10
  <12->   MnSymbolE12
}{}
\DeclareFontShape{OMX}{MnSymbolE}{b}{n}{
    <-6>  MnSymbolE-Bold5
   <6-7>  MnSymbolE-Bold6
   <7-8>  MnSymbolE-Bold7
   <8-9>  MnSymbolE-Bold8
   <9-10> MnSymbolE-Bold9
  <10-12> MnSymbolE-Bold10
  <12->   MnSymbolE-Bold12
}{}
\let\llangle\@undefined
\let\rrangle\@undefined
\DeclareMathDelimiter{\llangle}{\mathopen}%
                     {MnLargeSymbols}{'164}{MnLargeSymbols}{'164}
\DeclareMathDelimiter{\rrangle}{\mathclose}%
                     {MnLargeSymbols}{'171}{MnLargeSymbols}{'171}
\newcommand{\abs}[1]{\left\lvert #1 \right\rvert}
\newcommand{\abss}[1]{\lvert #1 \rvert}
\newcommand{\brac}[1]{\left(#1\right)}
\newcommand{\brangle}[1]{\left\langle#1\right\rangle}
\newcommand{\Brangle}[1]{\langle\kern-2.5pt\langle #1 \rangle\kern-2.5pt\rangle}
\newcommand{\col}{\colon\,}
\newcommand{\indi}{\mathbbm{1}}
\newcommand{\norm}[1]{\left\lVert#1\right\rVert}
\newcommand{\normm}[1]{\lVert#1\rVert}
\newcommand{\normiii}[1]{{\left\vert\kern-0.25ex\left\vert\kern-0.25ex\left\vert #1 \right\vert\kern-0.25ex\right\vert\kern-0.25ex\right\vert}}
\newcommand{\pprime}{\prime\prime}
\newcommand{\ssubset}{\subset\joinrel\subset}
\newcommand{\weakto}{\rightharpoonup}
    \noindent\textsc{Max Planck Institute for Mathematics in the Sciences,\\ Inselstrasse 22, Leipzig, 04103, Germany}\\
		\noindent\textsc{Department of Mathematics and Statistics, Georgetown University,
                  \\3700 Reservoir Road NW, Washington, D.C., 20007 , USA }   \\
		\noindent\textit{E-mail address}, B.~Rai\cb{t}\u{a}: \texttt{br607@georgetown.edu}
\begin{document}
	\title[Regularity for $\A$-quasiconvex integrals]{Partial regularity and higher integrability \\for $\A$-quasiconvex variational problems}
\author[Z. Li]{Zhuolin Li}
\author[B. Rai\cb{t}\u{a}]{Bogdan Rai\cb{t}\u{a}}
\subjclass[2020]{Primary: 49N60, Secondary: 35B65,  49J45, 28B05, 35E20}
\keywords{Regularity theory, $\A$-Quasiconvexity, Strong quasiconvexity, Constant rank operators, Partial continuity, Partial regularity, Systems of linear pde, Degenerate variational problems}

\maketitle
\begin{center}
  \small{  \textit{Dedicated to Jan Kristensen on the occasion of his 60th birthday}}
\end{center}
\begin{abstract}
 We prove that minimizers of variational problems on open sets $\Omega\subset\R^n$
 $$ \mbox{minimize}\quad
\mathcal E(v)=\int_\Omega f(v(x))\mathrm{d} x\quad\text{for } \mathscr{A}
v=0, $$
  are partially continuous provided that the integrands $f$ are strongly
$\mathscr{A}$-quasiconvex in a suitable sense. We consider $p$-growth problems
with $1<p<\infty$, linear constant rank PDE operators $\mathscr{A}$ on $\R^n$ between vector spaces $V$ and $W$, and
Dirichlet boundary conditions, in the sense that admissible fields are of the
form $v=v_0+\varphi$, with $\mathscr{A}$-free $\varphi\in C_c^\infty(\Omega,V)$.
Our analysis also covers the ``potentials case'' $$ \mbox{minimize}\quad
\F(u)=\int_\Omega f(\mathscr{B} u(x))\mathrm{d} x\quad\text{for }
u\in u_0+ C_c^\infty(\Omega,U), $$ where $\mathscr{B}$ is another linear constant rank PDE operator on $\R^n$ between vector spaces $U,\,V$. We also prove appropriate higher integrability of
minimizers for both types of problems. In addition, our approach covers  non-autonomous integrands $f(x,v(x))$ or $f(x,\B u(x))$.
 \end{abstract}
\tableofcontents
\section{Introduction}

Let $\Omega\subset\R^n$ be an open bounded set and $1\leq p<\infty$. Consider the problem
\begin{align}\tag{$P\A_\Omega$}\label{eq:P_Omega_Afree}
\mbox{minimize}\quad \mathcal E(v,\Omega)=\int_\Omega f(x,v(x))\dif x\quad\text{for }v\in v_0+C_{c,\A}^\infty(\Omega),
\end{align}
where $v_0\in L^p(\Omega,V)$ is $\A$-free and
$
C_{c,\A}^\infty(\Omega)=\{\varphi\in C_c^\infty(\Omega,V)\colon \A \varphi=0\}
$. 

Here $\A$ is a homogeneous vectorial differential operator with constant coefficients, defined on $\R^n$ from $V$ to $W$, which are both finite dimensional inner product spaces. We denote its characteristic polynomial/symbol map by $\A(\xi)\in \lin(V,W)$ for $\xi\in\R^n$. For the precise definitions, see Section~\ref{sec:diff_op}.

Functionals of this type have been studied extensively over the last several decades, first in the prototypical case $\A=\curl$ (which leads to functionals defined on Sobolev spaces) and later in the case of general differential constraints, emanating from the study of compensated compactness \cite{Tartar78, Murat81}. The lower semicontinuity of these functionals with respect to weak convergence is well understood both in the $\curl$-free case \cite{Morrey52,Dacorogna82_JFA,AF84,FonMul93,AmDal92,KP94,kristensen1999lower,FMP98,KriRin10a} as well as in the general $\A$-free case \cite{Murat81,Dacorogna82,FonMul99,FLM04,Dacorogna_JEMS,ARDPR,AR21,GR22,KriRai22}.  The overarching theme in these works is that the lower semicontinuity of functionals in \eqref{eq:P_Omega_Afree} is equivalent to the so-called \textbf{$\A$-quasiconvexity} condition introduced in \cite{FonMul99}.

All these results are subject to the so called \textbf{constant rank} condition on $\A$, i.e.
\begin{align}\tag{CR}\label{eq:CR}
    \rank\A(\xi)=\text{const.}\quad\text{for }\xi\in\R^n\setminus\{0\}.
\end{align}
Except for notable examples \cite{Muller99,LMM,SzWi}, the lower semicontinuity problem is widely open in the absence of the constant rank condition.
We will assume \eqref{eq:CR}, together with the condition that the wave cone of $\A$ \cite{Tartar78,Murat81} is spanning, i.e.
\begin{align}\tag{SC}\label{eq:SC}
    \spann\bigcup_{\xi\in\R^n\setminus\{0\}}\ker\A(\xi)=V.
\end{align}
This second assumption is mild (see Section~4.1 in \cite{KriRai22}).

Under these assumptions, the regularity of minimizers is far from fully understood. Our aim is to reach the high benchmark established in the case $\A=\curl$ by works such as \cite{Evans86,AF87,AF89,GiaMod86,marcellini1989regularity,CFM98,diening2012partial,GmeKri_BV}.
 These works use stronger quasiconvexity conditions to infer partial regularity results, which is typical in the vectorial calculus of variations even for strictly convex problems. Strong quasiconvexity also leads to higher integrability of minimizers above the growth parameter, see \cite{GG82} for a classic reference. For more details see the comprehensive exposition in \cite{Min06}.

On the other hand, there are only a handful of contributions to regularity for the general Problem~\eqref{eq:P_Omega_Afree} \cite{Federico,GK19,Gme20,Gme21,CG22,BK22,Schiffer24}, all of which impose stringent restrictions on the setup. 

Our main result establishes partial continuity and higher integrability of minimizers of \eqref{eq:P_Omega_Afree} under a suitable assumption of strong quasiconvexity on the integrand $f$: 
\begin{bigthm}\label{thm:main_A-free}
Let $\A$ be a  differential operator satisfying the \eqref{eq:CR} and \eqref{eq:SC} conditions and let $1<p<\infty$. Suppose that $f\colon \Omega\times V\to\R$ satisfies the  growth, smoothness, and strong $\A$-quasiconvexity  {assumptions} \textup{\ref{it:fgrowth}}, \textup{\ref{it:f_Acontinuity}}$'$, and \textup{\ref{it:f_B-qcstrong}} below. Then for any generalized minimizer $v$ of \eqref{eq:P_Omega_Afree}, there exists an open subset $\Omega'\subset\Omega$ and a constant $\sigma_0>0$ such that $\mathscr{L}^n(\Omega\setminus \Omega^{\prime})=0$ and $v\in C^{0,\alpha}(\Omega',V)\cap L^{p+\sigma}_{\locc}(\Omega,V)$ for any $\alpha\in(0,1)$ and $\sigma\in(0,\sigma_0)$.
\end{bigthm}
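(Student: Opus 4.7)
The plan is to implement the Acerbi--Fusco/Evans blow-up-and-harmonic-approximation scheme in the $\A$-free setting, exploiting the fact that under \eqref{eq:CR} any $\A$-free field locally admits a representation $v=\B u$ through a potential operator $\B$ of constant rank with $\A\B=0$, where \eqref{eq:SC} ensures that the potential captures enough of $v$. This reduces local questions about $\A$-free fields on small balls to questions about $\B u$ for a potential $u$, making the standard toolkit for strongly $\B$-quasiconvex $p$-growth problems on potentials available.

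First, I would derive a Caccioppoli inequality of the second kind on concentric balls $B_{r/2}\subset B_r\ssubset\Omega$: one tests minimality against competitors of the form $v-\B(\eta w)+\B((1-\eta)w)$, where $w=u-L$ is a localized potential with an affine piece $L$ subtracted and $\eta$ is a smooth cutoff. The strong $\A$-quasiconvexity \ref{it:f_B-qcstrong} yields a lower bound in terms of $V_p(v-(v)_{B_r})$ on the inner ball, while $p$-growth and the continuity assumption \ref{it:fcontinuity} control the right-hand side by $V_p((\B u-L)/r)$ on the annulus. Combined with a $\B$-Sobolev--Poincar\'e inequality, this Caccioppoli estimate becomes a reverse H\"older inequality at exponent $p$; Gehring's lemma then delivers the higher integrability $v\in L^{p+\sigma_0}_{\locc}(\Omega)$ claimed in the theorem.

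The partial continuity is then obtained via excess decay for
$E(x_0,r):=\dashint_{B_r(x_0)}|V_p(v-(v)_{B_r})|^2\,dx$.
The core step is a quantitative $\A$-harmonic approximation lemma: on balls where $E(x_0,r)$ is small relative to $|(v)_{B_r}|$, the field $v$ is close in $L^p$ to an $\A$-free solution $h$ of the constant-coefficient, Legendre--Hadamard elliptic system obtained by freezing $x_0$ and linearizing $D_v f(x,\cdot)$ at the mean $(v)_{B_r}$. Such $h$ enjoys the usual excess decay $E_h(x_0,\theta r)\leq C\theta^2 E_h(x_0,r)$ from constant-coefficient theory on $\ker\A(\xi)$ via a Fourier-multiplier projection. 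Combining the approximation error with this decay and iterating in $r$ produces, for every $\alpha\in(0,1)$, a Campanato estimate $E(x_0,\rho)\leq C_\alpha\rho^{2\alpha}$ at points where both $|(v)_{B_r(x_0)}|$ remains bounded and $E(x_0,r_0)$ is small enough at some initial scale $r_0$; Campanato's characterization then gives $v\in C^{0,\alpha}$ locally near such points.

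Taking $\Omega'$ to be the set of such good points, a Lebesgue-differentiation argument combined with the higher integrability yields $\mathscr{L}^n(\Omega\setminus\Omega')=0$. The main obstacle is producing the $\A$-harmonic approximation itself: this requires a careful Helmholtz-type projection onto $\ker\A$ realized by a Fourier multiplier on $\R^n$, combined with $L^p$ a priori estimates for the linearized $\A$-free system, both of which depend crucially on \eqref{eq:CR}. The $x$-dependence of $f$ forces an additional freezing argument compatible with \ref{it:fcontinuity}, and passing from the $L^2$-flavoured excess decay of $h$ to the $p$-power setting requires the Uhlenbeck $V_p$-framework to absorb the nonlinearity uniformly across the two regimes $p\leq 2$ and $p\geq 2$.
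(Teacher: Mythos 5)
Your high-level architecture (local reduction $v=\B u$ to a potential, Caccioppoli $\Rightarrow$ reverse H\"older $\Rightarrow$ Gehring, then excess decay $\Rightarrow$ Campanato) matches the paper, and the paper does indeed route Theorem~\ref{thm:main_A-free} through the $\B$-gradient problem as you suggest (Proposition~\ref{prop:reduction_A-free} and Remark~\ref{rmk:reduction}). But there is a concrete gap in the plan that the paper identifies as precisely the main difficulty, and that your sketch does not address.

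Since $\B$ satisfies only \eqref{eq:CR} and is not elliptic, the potential $u$ with $\B u=v$ is wildly non-unique and a priori has no Sobolev regularity beyond $\B u\in L^p$. Your Caccioppoli inequality, as described, gives a lower bound in terms of $V_p(\B(u-a))$ on the inner ball and an upper bound involving $V_p(D^i(u-a)/r^{k-i})$, $i<k$, on the annulus (this is Proposition~\ref{prop:Caccioppoli}). Applying a Sobolev--Poincar\'e inequality to the annulus terms produces $\bigl(\fint |D^k(u-a)|^{p_*}\bigr)^{p/p_*}$ on the right, while the left-hand side only controls $\B(u-a)$, not the full $D^k(u-a)$; the two quantities do not match, so the reverse H\"older inequality does not close and Gehring does not apply. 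The paper bridges this by choosing the representative $u$ with $\C^*u=0$, where $\C$ is a potential for $\B$ (Theorem~\ref{thm:BC}), so that the composite operator $(D^{l-k}\B,\C^*)$ is \emph{elliptic}. This yields the local Korn-type inequality of Proposition~\ref{prop:Korn}, which upgrades the Caccioppoli inequality to Proposition~\ref{prop:Caccioppoli_Dk} with $V_p(D^k(u-a))$ on the left; only then does the reverse H\"older/Gehring step go through. The paper states explicitly that the unnormalized Caccioppoli inequality ``we found insufficient for performing an excess-decay estimate approach''; your proposal would run into exactly that wall.

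The same omission affects your excess-decay step. You propose $\A$-harmonic approximation against the linearized system obtained by freezing the coefficient, i.e.\ $\B^*(\partial_z^2 f(x_0,z_0)\B h)=0$. But without the side condition $\C^*h=0$, this system has an enormous kernel (every $h$ with $\B h=0$ solves it), so there are no interior $C^\infty$ estimates or Campanato-type decay for $h$ alone; the ``constant-coefficient theory on $\ker\A(\xi)$'' you invoke is not standard and must be built. The paper instead studies the coupled system $\B^*(A\B h)=0$, $\C^*h=0$ (Theorem~\ref{thm:linear_system}) and derives the decay estimate from ellipticity of the composite operator together with the Korn inequality. Incidentally, the paper proves excess decay by a blow-up/contradiction argument (Propositions~\ref{prop:EDESup} and~\ref{prop:EDESub}) rather than quantitative harmonic approximation; that part is a legitimate alternative route, but it too would rely on the Korn/Caccioppoli$_{D^k}$ machinery and on having a well-posed linearized problem, so the omission of the $\C^*$-normalization is the essential gap, not the choice of decay argument.

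One further small inaccuracy: the local potential representation is $v=\B u+\phi$ with $\phi$ a smooth $\A$-free, $\B^*$-free field (Proposition~\ref{prop:reduction_A-free}), not simply $v=\B u$; $\phi$ is then absorbed into the $x$-dependence. Also, \eqref{eq:SC} does not say ``the potential captures enough of $v$''; its role is to ensure $\rmim T$ spans $V$ so that the quasiconvexity assumption bites on the whole target space -- in the $\B$-gradient framework it is automatic (Lemma~\ref{lem:essential_range}).
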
 
Here $\A$ is assumed to have homogeneous entries in the derivatives for simplicity of exposition, but the same result holds for differential operators with homogeneous rows (see \cite{Raita_pot_new} and Remark~\ref{rmk:homogeneity}). The case $p=1$ can be addressed by a non-trivial upgrade of our method, which will make the object of our future work.
We emphasize that Theorem~\ref{thm:main_A-free} is new for \textit{autonomous} integrands, i.e., when $f(x,\,\cdot\,)$ is independent of $x$; our method of proof integrates the $x$-dependence seamlessly.

Most of the earlier results concerning general differential operators that we mentioned pertain to the related problem on potential operator level
\begin{align}\tag{$P\B_\Omega$}\label{eq:P_Omega}
\mbox{minimize}\quad \F(u,\Omega)=\int_\Omega f(x,\B u(x))\dif x\quad\text{for }u\in u_0+C_c^\infty(\Omega, { U}),
\end{align}
where $u_0\in W^{k-1,p}(\Omega,U)$ is such that $ \B u_0\in L^p(\Omega,V)$; here $k$ is the order of $\B$.

We will show that establishing partial continuity of minimizers of \eqref{eq:P_Omega_Afree} can be reduced to proving partial continuity of minimizers $\B u$ of \eqref{eq:P_Omega} (see Remark~\ref{rmk:reduction}). Here $\A$ and $\B$ satisfy a homological relation described in Section~\ref{sec:diff_op} (see also \cite{Raita19,Raita_pot_new}). The reduction of $\eqref{eq:P_Omega_Afree}$ to $\eqref{eq:P_Omega}$ in the context of regularity theory is very likely to be new. 

Earlier results pertaining to partial regularity of minimizers of \eqref{eq:P_Omega} in \cite{CG22,Federico,} concern \textit{elliptic} operators $\B$ of first order. Ellipticity is a restrictive assumption which allows the use of strong harmonic analysis estimates.  We prove partial continuity and higher integrability of minimizers $\B u$ of \eqref{eq:P_Omega} for arbitrary constant rank operators:
\begin{bigthm}\label{thm:main}
   Let $\B$ be a  differential operator satisfying \eqref{eq:CR} and let $1<p<\infty$. Suppose that $f\colon \Omega\times V\to\R$ satisfies the  growth, smoothness, and strong quasiconvexity {assumptions} \textup{\ref{it:fgrowth}}, \textup{\ref{it:fcontinuity}}, and \textup{\ref{it:f_B-qcstrong}}$'$ below. Then for any generalized minimizer $\B u $ of \eqref{eq:P_Omega}, there exists an open subset $\Omega'\subset\Omega$ satisfying $\mathscr{L}^n(\Omega\setminus \Omega^{\prime})=0$ and a constant $\sigma_0>0$ such that $\B u\in C^{0,\alpha}(\Omega',V)\cap L^{p+\sigma}_{\locc}(\Omega,V)$ for any $\alpha\in(0,1)$, $\sigma\in(0,\sigma_0)$. Moreover, for each $\omega\Subset\Omega$, there is a representative $\tilde u\in W^{k,p}(\Omega,U)$ such that $\B u = \B \tilde{u}$ in $\omega$ and $\tilde u\in C^{k,\alpha}(\omega\cap\Omega',U)\cap W^{k,p+\sigma}(\omega,U)$. 
\end{bigthm}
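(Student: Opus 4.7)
The plan is to implement the classical partial regularity scheme of Evans and Acerbi--Fusco (with the Giaquinta--Modica $V_p$-excess for $p\neq 2$) adapted to the non-elliptic potential operator $\B$ via the constant-rank multiplier calculus. Let $\A$ be the annihilator of $\B$ provided by the constant-rank structure theorem, so that $\A\B=0$ and $\ker\A(\xi)=\ima\B(\xi)$ for $\xi\neq 0$. I introduce the excess
\[
\Phi(x_0,r):=\dashint_{B_r(x_0)} |V_p(\B u-(\B u)_{x_0,r})|^2\dif x,
\]
and define the candidate regular set $\Omega'$ to be the open set of Lebesgue points of $\B u$ at which $\Phi(x_0,r)$ eventually falls below a universal threshold.

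The first technical ingredient is a Caccioppoli inequality of the second kind: for each $B_r(x_0)\Subset\Omega$ and each polynomial $\ell$ of degree at most $k$ with $\B\ell\equiv z_0\in V$,
\[
\dashint_{B_{r/2}(x_0)}|\B u-z_0|^p\dif x\leq C\,\dashint_{B_r(x_0)}\Bigl|\tfrac{u-\ell}{r^k}\Bigr|^p\dif x+\textup{l.o.t.},
\]
proved by testing minimality against $u$ glued to $\ell$ through a cutoff, a second-order expansion of $f(x_0,\cdot)$ around $z_0$, and the strong $\B$-quasiconvexity \textup{\ref{it:f_B-qcstrong}}$'$. Combining this with the constant-rank Poincar\'e-type inequality $\inf_\ell\dashint_{B_r}|u-\ell|^p\dif x\leq Cr^{kp}\dashint_{B_r}|\B u|^p\dif x$ produces a reverse H\"older inequality for $|\B u|^p$, whence Gehring's lemma gives the higher integrability $\B u\in L^{p+\sigma}_{\locc}(\Omega)$.

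For the partial continuity, the pivot is an $\A$-harmonic approximation. At a point $x_0$ with bounded $z_0:=(\B u)_{x_0,r}$ and small $\Phi(x_0,r)$, the idea is to approximate $\B u$ on $B_r$ by an $\A$-free minimizer $g$ of the frozen quadratic form $\tfrac12\langle f''(x_0,z_0)\cdot,\cdot\rangle$ among $\A$-free fields. Strong quasiconvexity on $\A$-free fields gives a G\aa rding inequality for this form, and together with Mikhlin-type bounds for the constant-rank projector onto $\A$-free fields (which requires \eqref{eq:CR}) this yields the Schauder-type decay $\dashint_{B_{\rho r}}|g-(g)_{x_0,\rho r}|^2\dif x\leq C\rho^2\dashint_{B_r}|g|^2\dif x$. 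A compactness/contradiction argument, using the constant-rank projector and exact reconstruction of a $\B$-potential from a prescribed $\A$-free field at scale $r$, provides the approximation $\|\B u-g\|_{L^2(B_{r/2})}^2\leq \varepsilon(r)\,\Phi(x_0,r)$; combining with the decay of $g$ and higher-integrability-based control of the nonlinear error yields $\Phi(x_0,\rho r)\leq C\rho^{2\alpha}\Phi(x_0,r)$ for small $\rho$, and a Campanato--Morrey iteration produces $\B u\in C^{0,\alpha}(\Omega')$.

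Finally, for the potential, on each $\omega\Subset\Omega$ the constant-rank potential reconstruction theorem applied to the $\A$-free field $\B u\in L^{p+\sigma}(\omega)$ yields a $\tilde u\in W^{k,p+\sigma}(\omega)$ with $\B\tilde u=\B u$ on $\omega$; standard constant-coefficient Schauder theory for $\B$ then upgrades the H\"older bound on $\B u$ to $\tilde u\in C^{k,\alpha}(\omega\cap\Omega')$. The main obstacle I foresee is precisely the $\A$-harmonic approximation step: since $\B$ is not elliptic, the comparison field must be built via the constant-rank projector onto $\A$-free fields and then lifted to a $\B$-potential at the correct scale while preserving the Dirichlet data at the level of $u$ (not merely at the level of $\B u$), and making this lifting quantitatively tight enough to drive the iteration -- including uniform control of the multiplier constants in the frozen coefficient $f''(x_0,z_0)$ -- is the delicate heart of the argument.
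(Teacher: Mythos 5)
Your overall scaffolding (Caccioppoli, Gehring, excess decay, linear comparison) is the right shape, but the scheme as written has a genuine gap that you yourself flag at the end, and that gap is exactly where the paper's main new idea lives: the gauge $\C^*u=0$.

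Two of your intermediate claims are false for a general non-elliptic constant-rank $\B$ unless a gauge on the potential is imposed. First, the Poincar\'e-type inequality $\inf_\ell\fint_{B_r}|u-\ell|^p\dif x\leq Cr^{kp}\fint_{B_r}|\B u|^p\dif x$ cannot hold: the operator $\B$ has an infinite-dimensional kernel as a differential operator, so one can add to $u$ an arbitrary field in $\ker\B$ (e.g.\ any gradient when $\B=\curl$) without changing the right-hand side, while sending the left-hand side to infinity. Second, and for the same reason, an $\A$-harmonic approximation performed at the level of the field $\B u$ cannot be ``lifted to a $\B$-potential at the correct scale while preserving the Dirichlet data at the level of $u$'' in a quantitative way: once $\B$ is degenerate, nothing constrains the potential. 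The paper resolves both issues by invoking the constant-rank potential $\C$ of $\B$ ($\rmim\C(\xi)=\ker\B(\xi)$, Theorem~\ref{thm:BC}), projecting the competitor class so that $\C^*u=0$ (Propositions~\ref{prop:existence} and~\ref{prop:reduction}), and proving a local Korn-type inequality (Proposition~\ref{prop:Korn}) that controls $D^k u$ by $\B u$ and lower-order terms \emph{once the gauge holds}. Consequently the paper's excess is built from $D^k u$, not $\B u$ (see \eqref{eq:excess}), the Caccioppoli inequality is upgraded to a $D^k$-level estimate (Proposition~\ref{prop:Caccioppoli_Dk}), and the comparison linear problem is the genuinely elliptic system $\B^*(A\B h)=0,\ \C^*h=0$, i.e.\ $\PP^*\PP h=0$ with $\PP=(D^q\B,\C^*)$ (Theorem~\ref{thm:linear_system}), not an $\A$-harmonic approximation on $\A$-free fields.

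To repair your argument you would need to (a) introduce $\C$ and show that on interior balls the minimizer admits a $\C^*$-free representative $u$ with $D^ku\in L^p_{\locc}$ (this is nontrivial and is what the full-space Hodge decomposition and the local Korn inequality buy you), (b) replace your excess $\Phi$ in $\B u$ by the excess in $D^ku$, and (c) carry out the blow-up against the elliptic gauged system rather than against $\A$-free harmonic maps. Without (a)--(c), the ``constant-rank Poincar\'e inequality'' and the ``reconstruction of a $\B$-potential from a prescribed $\A$-free field at scale $r$'' remain unsubstantiated, and the excess-decay iteration cannot close.
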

As is the case for Theorem~\ref{thm:main_A-free}, the result of Theorem~\ref{thm:main} is new for autonomous integrands.

Unlike for elliptic operators $\B$, the potential $u$ associated with a minimizer $\B u$ can be wildly irregular since non-elliptic operators $\B$ have large kernels. In fact, it was shown in \cite{CG22} that ellipticity is necessary for minimizers $u$ of \eqref{eq:P_Omega} to be partially $C^{k,\alpha}$-regular. This degeneracy also makes the proof of existence of generalized minimizers of Problem~\eqref{eq:P_Omega}  more involved, see the details in Section~\ref{susbsec:B_exist}. 

The natural coercivity condition given by the appropriate strong quasiconvexity assumption becomes insufficient to choose a good representative (Remark~\ref{rmk:Bcoercivity}). This is also reflected in the Caccioppoli inequality in Proposition~\ref{prop:Caccioppoli} which we found insufficient for performing an excess-decay estimate approach. 
Our main new idea is to look at a potential operator $\C$ of $\B$ (Theorem~\ref{thm:BC}) and to attempt to impose the condition $\C^* u=0$ in \eqref{eq:P_Omega}. It is unclear if this is possible in general open sets, so we only worked with local  estimates, see Section~\ref{sec:linear_inequ}. One difficulty in proving these comes from the fact that the order of $\C$ is in general larger than the order of $\B$, so it was unclear what kind of Calder\'on--Zygmund estimates to expect.
The nonlinear estimate in \eqref{eq:Korn_ball} lead us to the improved Caccioppoli inequality in Proposition~\ref{prop:Caccioppoli_Dk}, which is strong enough to enable us to prove both the higher integrability in Section~\ref{sec:higher_int} and the partial regularity in Section~\ref{sec:partialreg}.

Both  our main results above also apply to local minimizers in the sense of 
\cite{CG22,Federico}, i.e. vector fields $v\in L^p(\Omega,V)$ such that $\A v=0$ which satisfy
$$
\int_\Omega f(x,v(x))\dif x\leq \int_\Omega f(x,v(x)+\varphi(x))\dif x \quad\varphi\in C_c^\infty(\Omega,V)\text{ with }\A\varphi=0.
$$
For the $\B$-gradients setting we consider $v=\B u$ and $\varphi=\B\phi$ with $\phi\in C_c^\infty(\Omega,U)$.

We remark that in contrast to Theorem~\ref{thm:main_A-free}, in Theorem~\ref{thm:main}, we require no spanning cone condition. This is so since in the $\B$-gradient framework, this condition is satisfied automatically, see Lemma~\ref{lem:essential_range}.
We also clarify that in both Theorem~\ref{thm:main_A-free} and~\ref{thm:main} the notion of generalized minimality is considered with respect to the natural sequential weakly $L^p$-closure of the admissible classes. The details are provided in Section~\ref{sec:existence}.

\par We now list our 
assumptions on  integrands $f\col \Omega \times V \to \R$: 
    \begin{enumerate}[label=(\textbf{H\theenumi}),leftmargin=3\parindent, itemsep=.2em]
        \item\label{it:fgrowth} $\abss{f(x,z)}\leq L(1+\abss{z}^p)$ for any $(x,z)\in \Omega \times V$, where $L>0$;
        \item\label{it:fcontinuity} For any $x\in \Omega$ the function $f(x,\cdot)$ is $C^{2}$, and for any $z\in V$ the functions $\partial_z^{\, i}f(\cdot, z)$, $i=0,1,2$, are all continuous with
            \begin{equation*}
                \abss{\partial_z f(x,z)-\partial_z f(y,z)}\leq L \abss{x-y}(1+\abss{z}^2)^{\frac{p-1}{2}}
            \end{equation*} for any $z\in V$ and any $x,y\in \Omega$; 
        \item[(\textbf{H2})$^{\prime}$] The functions $\partial_z^{\,i}f(\cdot, \cdot)$, $i=0,1,2$, are continuous in $\Omega \times V$ with \label{it:f_Acontinuity}
            \begin{align*}
                \abss{\partial_z f(x,z)-\partial_z f(y,z)}&\leq L \abss{x-y}(1+\abss{z}^2)^{\frac{p-1}{2}},\\
                \abss{\partial_z^2 f(x,z)} &\leq L (1+\abss{z}^2)^{\frac{p-2}{2}}
            \end{align*} for any $z\in V$ and any $x,y\in \Omega$;
        \item The function $f$ is \textbf{strongly $\A$-quasiconvex} in the following sense:  there exists $\ell>0$ such that for any $z\in V$,  $\varphi \in C_c^{\infty}(B_1,V)$ with $\A \varphi =0$ and a.e. $x_0\in\Omega$, we have \label{it:fqcstrong}
            \begin{equation*}
                \fint_{B_1} f(x_0,z+ \varphi) \dif x \geq f(x_0,z) + \ell \fint_{B_1} (1+\abss{z}^2 +\abss{\varphi}^2)^{\frac{p-2}{2}}\abss{\varphi}^2 \dif x.
            \end{equation*} 
        \item[(\textbf{H3})$^{\prime}$] 
            The function $f$ is \textbf{strongly quasiconvex with respect to $\B$-gradients} in the following sense: there exists $\ell>0$ such that for any $z\in V$, $\phi \in C_c^{\infty}(B_1,U)$ and a.e. $x_0\in\Omega$, we have\label{it:f_B-qcstrong}
            \begin{equation*}
                \fint_{B_1} f(x_0,z+ \B\phi) \dif x \geq f(x_0,z) + \ell \fint_{B_1} (1+\abss{z}^2 +\abss{\B\phi}^2)^{\frac{p-2}{2}}\abss{\B\phi}^2 \dif x.
            \end{equation*} 
    \end{enumerate}
\par Assumption \ref{it:f_Acontinuity}$'$, which is slightly stronger than \ref{it:fcontinuity}, is needed for the regularity claim in Theorem \ref{thm:main_A-free}. We will locally reduce the $\A$-free problem \eqref{eq:P_Omega_Afree} to \eqref{eq:P_Omega} for the regularity result (see Section \ref{sec:existence}), and \ref{it:f_Acontinuity}$'$ guarantees that the new integrand obtained in the reduction satisfies \ref{it:fcontinuity}.
\par Notice that \ref{it:fgrowth} and $\ref{it:fqcstrong}$ together imply, by Lemma 2.3 in \cite{KK16},
    \begin{equation}\label{eq:fLip}
        \abss{\partial_z f(x,z)} \leq CL(1+\abss{z}^{p-1})\quad\text{for any $(x,z) \in \Omega \times V$,}
    \end{equation} 
    an estimate which will be used frequently in the sequel. 

It was observed in \cite{ChenKris17} that strong quasiconvexity at a point is sufficient for proving mean coercivity for autonomous integrands. This interesting observation led us to realize that our strong quasiconvexity assumptions ensure existence in the case of autonomous integrands:
\begin{bigthm}\label{thm:existence_autonomous}
    Let $f\colon V\to\R$. Under the assumptions of Theorem~\ref{thm:main_A-free} (resp.~\ref{thm:main}), generalized minimizers of Problem~\eqref{eq:P_Omega_Afree} (resp.~\eqref{eq:P_Omega}) exist.
\end{bigthm}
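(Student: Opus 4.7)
My plan is to apply the direct method of the calculus of variations to both \eqref{eq:P_Omega_Afree} and \eqref{eq:P_Omega}. The two ingredients are coercivity of the functionals along admissible sequences, which will be extracted from \ref{it:fqcstrong} (resp.~\ref{it:f_B-qcstrong}$'$) evaluated at $z=0$, following the Chen--Kristensen observation \cite{ChenKris17}, and sequential weak $L^p$-lower semicontinuity, which is the classical result for autonomous $\A$-quasiconvex integrands of $p$-growth \cite{FonMul99,KriRai22}. Autonomy of $f$ is essential: it is precisely what allows strong quasiconvexity at a single point to yield a global coercivity estimate.

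For coercivity in the \eqref{eq:P_Omega_Afree} case I first test \ref{it:fqcstrong} at $z=0$ on $B_1$; homogeneity of $\A$ renders the inequality invariant under the rescaling $\varphi\mapsto\varphi((\cdot-x_0)/r)$, so it holds on every ball. For general $\varphi\in C^\infty_{c,\A}(\Omega)$ I take a large ball $B_R\supset\Omega$, extend $\varphi$ by zero (preserving compact support and $\A$-freeness), and apply the inequality on $B_R$ to obtain
\begin{equation*}
    \int_\Omega f(\varphi)\dif x \geq |\Omega|f(0) + \ell \int_\Omega (1+|\varphi|^2)^{\frac{p-2}{2}}|\varphi|^2\dif x \geq c\|\varphi\|_{L^p(\Omega)}^p - C,
\end{equation*}
the second inequality following from $(1+|w|^2)^{(p-2)/2}|w|^2\geq c|w|^p$ on $\{|w|\geq 1\}$. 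To pass from $\int f(\varphi)\dif x$ to $\int f(v_0+\varphi)\dif x$ I combine \ref{it:fgrowth}, the Lipschitz bound \eqref{eq:fLip}, and Young's inequality, which after absorbing the $\varepsilon\|\varphi\|_{L^p}^p$ cross term yields $\mathcal E(v_0+\varphi,\Omega)\geq c'\|\varphi\|_{L^p(\Omega)}^p-C$ for every admissible $\varphi$.

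Given any minimizing sequence $v_k=v_0+\varphi_k$ for \eqref{eq:P_Omega_Afree}, coercivity forces $\{v_k\}$ to be bounded in $L^p(\Omega,V)$. Reflexivity gives a weakly convergent subsequence $v_k\rightharpoonup v$; linearity of $\A$ yields $\A v=0$, so $v$ lies in the weak $L^p$-closure of the admissible class. Weak lower semicontinuity of $\mathcal E$ on $\A$-free fields then gives $\mathcal E(v,\Omega)\leq\liminf_k\mathcal E(v_k,\Omega)=\inf\mathcal E$, exhibiting $v$ as a generalized minimizer. The same scheme applies to \eqref{eq:P_Omega}: testing \ref{it:f_B-qcstrong}$'$ at $z=0$ with $\phi\in C^\infty_c(\Omega,U)$ yields $\mathcal F(u_0+\phi,\Omega)\geq c'\|\B\phi\|_{L^p(\Omega)}^p-C$. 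The main subtlety, and the only nontrivial point I anticipate, is the degeneracy flagged in Section~\ref{susbsec:B_exist}: the possibly large kernel of $\B$ prevents any control on $\phi_k$ itself. However, since generalized minimality for \eqref{eq:P_Omega} is phrased in terms of the weak $L^p$-closure of $\B(u_0+C^\infty_c(\Omega,U))$, the weak limit of the bounded sequence $\B u_k$ is precisely the object required, and it is realized as $\B\tilde u$ by the potential--annihilator exactness recalled in Section~\ref{sec:diff_op}.
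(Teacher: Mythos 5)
Your overall blueprint---coercivity from the Chen--Kristensen observation applied to \ref{it:fqcstrong} at $z=0$, then the direct method---matches the paper's exactly (Theorem~\ref{thm:Acoercivity} + Proposition~\ref{prop:coerc_implies_existence_A-free}, resp.\ Remark~\ref{rmk:Bcoercivity} + Proposition~\ref{prop:coerc_implies_existence_B}). Your rescaling argument for extending \ref{it:fqcstrong} from $B_1$ to general $\Omega$ is also what the paper implicitly uses, and the splitting of $\int_\Omega f(v_0+\varphi)$ into $\int_\Omega f(\varphi)$ plus a term controlled by \eqref{eq:fLip} and Young is the paper's Theorem~\ref{thm:Acoercivity} verbatim.

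There is, however, a genuine gap in the lower semicontinuity step. You invoke ``the classical result for autonomous $\A$-quasiconvex integrands of $p$-growth [FonMul99, KriRai22].'' The paper explicitly flags this as insufficient: the integrands here are only assumed to satisfy $|f(z)|\le L(1+|z|^p)$, hence may be \emph{signed}, while the Fonseca--M\"uller theorem is proved for nonnegative integrands; and the more recent lsc results do not account for concentration of the minimizing sequence at $\partial\Omega$, which for signed $f$ can destroy lower semicontinuity. This is precisely what the paper's Lemma~\ref{lem:lsc} repairs: it extends $v_0$, the sequence, and the integrand by zero to a larger open set $\Omega'\Supset\Omega$, so that the competitors become $\A$-free up to a fixed $W^{-h,p}_{\locc}$ source compactly supported away from $\partial\Omega'$, and then applies the interior lsc theorem of \cite{KriRai20} on $\Omega'$. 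Without this (or an equivalent argument showing boundary concentration cannot lose mass in the presence of Dirichlet data), the step ``$\mathcal E(v,\Omega)\le\liminf_k\mathcal E(v_k,\Omega)$'' is not justified. A second, more minor imprecision: from $v_k\rightharpoonup v$ and linearity of $\A$ you deduce ``$\A v=0$, so $v$ lies in the weak $L^p$-closure of the admissible class''---but membership in $v_0+L^p_{0,\A}(\Omega)$ is stronger than $\A v=0$ (which would also hold for fields not attaining the boundary data). What one actually needs is Mazur's lemma: $L^p_{0,\A}(\Omega)$ is the \emph{strong} closure of $C^\infty_{c,\A}(\Omega)$, hence convex and strongly closed, hence weakly closed, so the weak limit $v-v_0$ of $\{\varphi_k\}\subset C^\infty_{c,\A}(\Omega)$ lands there.
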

The technical description of our notion of minimality can be found in Definition~\ref{def:generalized_min}. As a consequence of Theorem~\ref{thm:existence_autonomous}, our results offer a full package in the case of autonomous integrands: strong quasiconvexity, smoothness, and growth conditions imply existence and regularity of minimizers.

When applied to the case of exterior derivatives, this result is  more general than Theorem~5.1 in \cite{Dacorogna_JEMS} as far as both the coercivity condition and the regularity of the boundary data are concerned. In fact, to the best of our knowledge, our results are the first partial regularity results for minimizers of integral functionals defined on exact differential forms \cite{Dacorogna_JEMS,SIL1,SIL2,SIL3}:
\begin{bigthm}\label{thm:main_du}
Let $1<p<\infty$, $u_0\in L^p(\Omega,\Lambda^{k-1} \R)$ with $du_0\in L^p(\Omega)$. Consider the problem
\begin{align}\label{eq:P_Omega_du}
\mbox{minimize}\quad \int_\Omega f(du)\quad\text{for }u\in u_0+C_c^\infty(\Omega, \Lambda^{k-1}\R^n),
\end{align}
where $f\colon \Lambda^k\R^n\to\R$ satisfies the  growth, smoothness, and strong quasiconvexity {assumptions} \textup{\ref{it:fgrowth}}, \textup{\ref{it:fcontinuity}}, and \textup{\ref{it:f_B-qcstrong}}$'$ with $\B=d$. Then  generalized minimizers $du$ of \eqref{eq:P_Omega_du} exist. For any such minimizer, there exists an open subset $\Omega'\subset\Omega$ satisfying $\mathscr{L}^n(\Omega\setminus \Omega^{\prime})=0$ and a constant $\sigma_0>0$ such that $d u\in C^{0,\alpha}(\Omega',\Lambda^{k}\R^n)\cap L^{p+\sigma}_{\locc}(\Omega,\Lambda^{k}\R^n)$ for any $\alpha\in(0,1)$ and $\sigma\in(0,\sigma_0)$. Moreover, for each $\omega\Subset\Omega$, there is a representative $\tilde u\in W^{1,p}(\Omega,\Lambda^{k-1}\R^n)$ such that $d u = d \tilde{u}$ in $\omega$ and $\tilde u\in C^{1,\alpha}(\omega\cap\Omega',\Lambda^{k-1}\R^n)\cap W^{1,p+\sigma}(\omega,\Lambda^{k-1}\R^n)$. 
\end{bigthm}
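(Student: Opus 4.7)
The strategy is to obtain Theorem~\ref{thm:main_du} as a direct specialization of Theorems~\ref{thm:existence_autonomous} and~\ref{thm:main} to the first-order operator $\B=d$, viewed as mapping $(k-1)$-forms on $\R^n$ to $k$-forms. Hypotheses \ref{it:fgrowth}, \ref{it:fcontinuity} and \ref{it:f_B-qcstrong}$'$ are granted by assumption and the integrand is autonomous, so the only structural input required is the verification that $d$ satisfies the constant rank condition~\eqref{eq:CR}.

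For $\xi\in\R^n\setminus\{0\}$ the principal symbol of $d$ is exterior multiplication, $d(\xi)\omega=\xi\wedge\omega$. By exactness of the Koszul complex on $\R^n$ (equivalently, by choosing an orthonormal basis of $\R^n$ whose first vector is $\xi/|\xi|$ and expanding components), one finds
\[
\ker d(\xi)=\xi\wedge\Lambda^{k-2}\R^n,\qquad \dim\ker d(\xi)=\binom{n-1}{k-1},
\]
which is independent of $\xi\neq 0$. Hence $d$ has constant rank. The spanning cone condition~\eqref{eq:SC} is not required for Theorem~\ref{thm:main} and is in any case automatic in the $\B$-gradient setting by Lemma~\ref{lem:essential_range}.

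Existence of a generalized minimizer of~\eqref{eq:P_Omega_du} then follows from Theorem~\ref{thm:existence_autonomous}, since $f$ depends only on $z\in\Lambda^k\R^n$ and the datum $u_0\in L^p(\Omega)$ with $du_0\in L^p(\Omega)$ matches the admissibility requirement for~\eqref{eq:P_Omega} with $\B=d$: the order of $d$ is $1$, so the relevant starting Sobolev space is $W^{0,p}=L^p$. The partial H\"older continuity and higher integrability of $du$, together with the existence for each $\omega\Subset\Omega$ of a representative $\tilde u\in W^{1,p}(\Omega)$ satisfying $d\tilde u=du$ on $\omega$ and $\tilde u\in C^{1,\alpha}(\omega\cap\Omega')\cap W^{1,p+\sigma}(\omega)$, then follow verbatim from Theorem~\ref{thm:main}.

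There is thus no genuine technical obstacle specific to Theorem~\ref{thm:main_du}; its substance is entirely absorbed into Theorems~\ref{thm:existence_autonomous} and~\ref{thm:main}, and it is stated separately to record what appears to be the first partial regularity result for minimizers of integral functionals on exact differential forms. The only conceptual point worth emphasizing is the marked non-ellipticity of $d$, which is precisely the obstruction to any $C^{1,\alpha}$-regularity of the potential $u$ itself and is the reason that the conclusion is phrased in terms of a carefully chosen local representative $\tilde u$ rather than $u$.
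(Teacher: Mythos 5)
Your proposal is correct and follows precisely the route the paper intends: Theorem~\ref{thm:main_du} is indeed read off as the specialization of Theorems~\ref{thm:existence_autonomous} and~\ref{thm:main} to the first-order, constant-rank operator $\B=d\colon\Lambda^{k-1}\R^n\to\Lambda^k\R^n$, with the constant-rank check via exactness of the Koszul complex being the only structural verification required. One small numerical slip: choosing an orthonormal basis with $e_1=\xi/|\xi|$, the kernel of $\xi\wedge\cdot$ on $\Lambda^{k-1}\R^n$ consists of forms $e_1\wedge\alpha$ with $\alpha\in\Lambda^{k-2}(\operatorname{span}\{e_2,\dots,e_n\})$, so $\dim\ker d(\xi)=\binom{n-1}{k-2}$, not $\binom{n-1}{k-1}$; this is immaterial since the argument only needs $\xi$-independence of the dimension, but you should correct it.
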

Despite precise Hodge decompositions on domains in this case \cite{ISS,schwarz2006hodge}, we cannot obtain the existence of a representative $\tilde u$ such that $d\tilde u=d u$ in $\Omega$ due to the low boundary regularity of $\Omega$ (open set). This would be possible  if $\Omega$ is assumed Lipschitz for instance.

Our last main result concerns the partial continuity and higher integrability for functionals defined on closed differential forms:
\begin{bigthm}\label{thm:main_d-free}
    Let $1<p<\infty$ and $v_0\in L^p(\Omega,\Lambda^k \R^n)$ satisfy $dv_0=0$. Suppose that $f\colon  \Lambda^k\R^n\to\R$ satisfies the  growth, smoothness, and strong $d$-quasiconvexity  {assumptions} \textup{\ref{it:fgrowth}}, \textup{\ref{it:fcontinuity}}$'$, and \textup{\ref{it:f_B-qcstrong}} for $\A=d$. Then there exist generalized minimizers $v$ of the problem 
    \begin{align*}
\mbox{minimize}\quad \int_\Omega f(v)\quad\text{for }v\in v_0+C_{c,d}^\infty(\Omega).
\end{align*}
   For any such minimizer $v$, there exists an open set $\Omega'\subset\Omega$ and a constant $\sigma_0>0$ such that $\mathscr{L}^n(\Omega\setminus \Omega^{\prime})=0$ and $v\in C^{0,\alpha}(\Omega',\Lambda^{k}\R^n)\cap L^{p+\sigma}_{\locc}(\Omega,\Lambda^{k}\R^n)$ for any $\alpha\in(0,1)$ and $\sigma\in(0,\sigma_0)$.
\end{bigthm}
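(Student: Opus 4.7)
The plan is to obtain Theorem~\ref{thm:main_d-free} as a direct specialization of Theorems~\ref{thm:main_A-free} and~\ref{thm:existence_autonomous} to the operator $\A = d\colon C^\infty(\R^n,\Lambda^k\R^n)\to C^\infty(\R^n,\Lambda^{k+1}\R^n)$, with target inner product space $V=\Lambda^k\R^n$. The growth, smoothness and strong $d$-quasiconvexity hypotheses in the statement are phrased to match \ref{it:fgrowth}, \ref{it:fcontinuity}, \ref{it:fqcstrong} verbatim, the admissible class $v_0+C_{c,d}^\infty(\Omega)$ is the admissible class of \eqref{eq:P_Omega_Afree}, and the notion of generalized minimizer is the one supplied by Definition~\ref{def:generalized_min}. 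Hence the only thing left to verify is that $d$ satisfies the structural conditions \eqref{eq:CR} and \eqref{eq:SC}, after which Theorem~\ref{thm:main_A-free} yields the partial $C^{0,\alpha}$-regularity and the higher integrability, while Theorem~\ref{thm:existence_autonomous} supplies existence of generalized minimizers since the integrand is autonomous.

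For the constant rank condition, the principal symbol at $\xi\in\R^n\setminus\{0\}$ is $d(\xi)\omega=\xi\wedge\omega$. By the standard exactness of the Koszul complex associated with multiplication by $\xi$, one has
\begin{equation*}
\ker d(\xi) = \xi\wedge\Lambda^{k-1}\R^n,
\end{equation*}
and the right-hand side has dimension $\binom{n-1}{k-1}$ independent of the choice of nonzero $\xi$. Hence \eqref{eq:CR} holds. For the spanning wave cone condition, let $I=(i_1<\dots<i_k)$ be any ordered multi-index and consider the basis form $e_I=e_{i_1}\wedge\dots\wedge e_{i_k}\in\Lambda^k\R^n$; then $e_{i_1}\wedge e_I=0$, so $e_I\in\ker d(e_{i_1})$. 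Thus every basis element of $\Lambda^k\R^n$ lies in some $\ker d(\xi)$, and so
\begin{equation*}
\spann\bigcup_{\xi\in\R^n\setminus\{0\}}\ker d(\xi)=\Lambda^k\R^n,
\end{equation*}
which is \eqref{eq:SC}.

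With \eqref{eq:CR} and \eqref{eq:SC} in hand for $\A=d$, both conclusions of the theorem follow: the partial H\"older continuity on an open set $\Omega'\subset\Omega$ of full measure and the local higher integrability $L^{p+\sigma}_{\locc}(\Omega)$ are direct consequences of Theorem~\ref{thm:main_A-free}, and existence of a generalized minimizer comes from Theorem~\ref{thm:existence_autonomous}. There is no substantive analytic obstacle here beyond the symbol computation above; the entire content of Theorem~\ref{thm:main_d-free} lies in recognizing that closed $k$-form fields fit the $\A$-free framework developed for the general operators, so the main task is ensuring the hypotheses match and that $d$ is covered by the constant rank theory, which it is.
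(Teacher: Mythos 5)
Your proposal is correct and follows exactly the route the paper takes: Theorem~\ref{thm:main_d-free} is obtained by specializing Theorems~\ref{thm:main_A-free} and~\ref{thm:existence_autonomous} to $\A=d$ on $\Lambda^k\R^n$, after verifying \eqref{eq:CR} and \eqref{eq:SC}, which you did correctly via exactness of the Koszul complex (giving $\ker d(\xi)=\xi\wedge\Lambda^{k-1}\R^n$ of constant dimension $\binom{n-1}{k-1}$) and the observation that $e_{i_1}\wedge e_I=0$ forces every basis $k$-form into some $\ker d(\xi)$. The paper states this only as ``an immediate application of Theorem~\ref{thm:main_A-free}'' without spelling out the symbol computations or citing Theorem~\ref{thm:existence_autonomous} for existence, so your write-up is in fact slightly more explicit than the original (the only degenerate case it implicitly omits is $k=0$, where the admissible class is a singleton and the conclusion is vacuous).
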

This result is an immediate application of Theorem~\ref{thm:main_A-free}. Both Theorems~\ref{thm:main_du} and~\ref{thm:main_d-free} extend to the case of vector valued differential forms.  While there has been work on regularity for functionals and nonlinear elliptic systems defined on exact or closed differential forms \cite{uhlenbeck1977regularity,hamburger1992regularity,beck2013regularity,lee2024calderon}, we believe that Theorems~\ref{thm:main_du} and~\ref{thm:main_d-free} are new. In fact, it may well be that Theorem~\ref{thm:main_A-free} is new for $\A=\di$, $\A\colon \mathscr D'(\R^n,\R^{N\times n})\to\mathscr D'(\R^n,\R^{n})$.
\begin{example}
    In the following, we list a few more examples of PDE constraints $\A$ of constant rank appearing in  variational models for various applications. To the best of our knowledge, the regularity results established in Theorem~\ref{thm:main_A-free} are new for all the following examples:
    \begin{enumerate}
        \item \textit{Divergence on symmetric matrices}: $\A\colon\mathscr D'(\R^n,\R^{n\times n}_{\sym})\to\mathscr D'(\R^n,\R^{n})$ is the row-wise divergence operator. Symmetric div-quasiconvex hulls were studied in connection with elasticity theory in \cite{conti2018data,conti2020symmetric,behn2022symmetric}.
        \item \textit{The linearly relaxed Euler system}, $\A\colon\mathscr D'(\R^{1+d},\R\times\R^d\times \mathbb S^d_0\times \R)\to\mathscr D'(\R^{1+d},\R^{1+d})$,      
        $$\A_E(\rho,m,M,q)=(\partial_tm+\di M+Dq,\,\partial_t\rho+\di m),$$ 
        which was used for selecting measure-valued solution of the isentropic Euler system \cite{gallenmuller2021muller,gallenmuller2021selection,gallenmuller2023measure,gallenmuller2023which,gallenmuller2023probabilistic}, see also \cite{delellis2009euler,SzWi}.
        \item \textit{Generalized Saint-Venant compatibility operators}: The higher order operators defined in Section~2.2, Equation~(14) of ~\cite{raictua2024scaling} are used in inverse problems and tensor tomography \cite{sharafutdinov2012integral,paternain2014tensor,paternain2023geometric}.
        \item \textit{Higher order divergence}: The operator $\A=\di^k=(D^k)^*$, i.e. the adjoint of the $k$-th order gradient gradient, $\A\colon \mathscr D'(\R^n,\mathrm{Slin}^k(\R^n,\R^N))\to\mathscr D'(\R^n,\R^{N})$. This is an ubiquitous  example since any operator of order $k$ can be written as $\di^k\circ T$ for some linear map $T$. This operator has several invariance properties which were used to prove very general statements, e.g. \cite{van2008estimates,van2013limiting,van2022injective}.    
        \end{enumerate}
\end{example}
Our work is just an intersection node between two large networks of results in the calculus of variations: regularity theory and variational problems under linear pde constraints. While no amount of citations would do justice to either field, we will nevertheless endeavor to point the reader in a few fascinating directions that are related to our theme.
We begin by mentioning regularity results for the widely studied $\curl$-free case, which is Problem~\eqref{eq:P_Omega_Afree} with $\A=\curl$ or Problem~\eqref{eq:P_Omega} for $\B=D$, with standard growth conditions including linear growth \cite{carozza2003partial,kristensen2003partial,kuusi2016partial,li2022partial,li2023regularity} and  non-standard growth conditions \cite{marcellini1989regularity,fonseca1997relaxation,esposito1999higher,esposito2004sharp,bella2020regularity,schmidt2009regularity,Schmidt14,carozza2021trace,defilippis2022quasiconvexity,irving2023bmo,defilippis2023singular,diening2012partial}. Problems of the type \eqref{eq:P_Omega_Afree} with the same Dirichlet boundary conditions were considered recently in \cite{raictua2023scaling,raictua2024scaling}, see also \cite{garroni2004rigidity,palombaro2004three,chan2015energy,dephilippis2018two,sorella2023four,ruland2022energy}. Contributions related to concentration phenomena in $\A$-free sequences include \cite{fonseca2010oscillations,KK16,dephilippis2016structure,dephilippis2017characterization,arroyo2019dimensional,guerra2022compensated,KriRai22,derosa2023fine,arroyo2024higher,guerra2024compensation}. Symmetric div-quasiconvex hulls of sets, with an outlook to geometrically linear elasticity, were studied in \cite{conti2018data,conti2020symmetric,behn2022symmetric}. Interesting  contributions to the study of the Euler equations in the $\A$-free frameworks were made in \cite{chiodaroli2017free,gallenmuller2021selection,gallenmuller2023measure,gallenmuller2023which,gallenmuller2023probabilistic}. Homogenization results for $\A$-quasiconvex integrals can be found in \cite{braides2000quasiconvexity,matias2015homogenization,davoli2016homogenization,davoli2016periodic,davoli2018relaxation,ferreira2021homogenization,davoli2024homogenization}. Applications to image processing can be found in \cite{pagliari2022bilevel,davoli2023adaptive}.
Results on lower semicontinuity and regularity of minimizers of integrals defined on spaces of mixed smoothness can be found in \cite{kazaniecki2017anisotropic,prosinski2019calculus,prosinski2023existence}.
Other interesting contributions that do not fit in any of the above categories can be found in \cite{dacorogna2002ab,santos2004quasi,baia2013lower,kramer2017A,prosinski2018closed,koumatos2021quasiconvexity,skipper2021lower,lienstromberg2023data,schiffer2024sufficient}.

\medskip
This paper is organized as follows: In Section~\ref{sec:prel} we collect notation and preliminaries on differential operators and function space inequalities, in Section~\ref{sec:coercivity} we prove mean coercivity for autonomous integrands and lower semicontinuity for non-autonomous integrands, paving the way to the existence proofs in Section~\ref{sec:existence}, where we also prove Theorem~\ref{thm:existence_autonomous}. In Section~\ref{sec:linear_inequ} we prove local estimates for linear systems which are crucial for our main results. In Section~\ref{sec:higher_int} we prove two Caccioppoli inequalities and show how they lead to higher integrability. Finally, in Section~\ref{sec:partialreg} we prove the excess decay estimates that lead to partial regularity and complete the proofs of Theorems~\ref{thm:main_A-free} and~\ref{thm:main}.

\subsection*{Acknowledgment} The authors thank Tatiana Toro for insightful discussions. B.R. thanks SLMath and Tatiana Toro for the hospitality and financial support during a visit when significant parts of the current research were conducted. Part of this paper is based upon work supported by the National Science Foundation under Grant No. DMS-1928930 and by the Alfred P. Sloan Foundation under grant G-2021-16778, while Z.L. was in residence at the Simons Laufer Mathematical Sciences Institute (formerly MSRI) in Berkeley, California, during the Spring 2024 semester.

\section{Preliminaries}\label{sec:prel}
\subsection{Notation}\label{subsec:notation}
\par Throughout this paper, $\Omega$ is a bounded open subset of $\R^n$ unless otherwise specified. For any $x\in \R^n$ and $r>0$, we denote by $B(x,r)$ or $B_r(x)$ the open ball $\{y\in \R^n\col \abss{x-y}<r\}$. $\Le^n$ is the Lebesgue measure on $\R^n$, and $\indi_{E}$ for any $E\subset \R^n$ is the indicator function of $E$. For any measurable set $E\subset \R^n$ with $0<\Le^n(E)<\infty$ and any $f \in L^1(E)$, set the average notation as follows
    \[ (f)_E= \fint_{E}:= \frac{1}{\Le^n(E)}\int_E f\dif x. \] This notation also extends to vector-valued functions.
\par For any exponent $p \in (1, \infty)$, its conjugate $p'$ is defined such that $\frac{1}{p}+\frac{1}{p'}=1$. The comparability of two quantities $x_1$ and $x_2$ is denoted by $x_1 \sim_{c_1,\ldots,c_m} x_2$, which means that there exists $C=C(c_1,\ldots,c_m)\geq 1$ such that $\frac{1}{C}x_2 \leq x_1 \leq Cx_2$. Similarly, the relation $x_1 \lesssim_{c_1,\ldots,c_m} x_2$ means that there exists $C=C(c_1,\dots,c_m)>0$ such that $ x_1 \leq Cx_2$.

\par Suppose that $X$ is an arbitrary finite dimensional inner-product space over $\R$. For any $x,y \in X$, denote by $x\cdot y$ or $\brangle{x,y}$ the inner product of $x$ and $y$. The norm induced by the inner product is denoted by $\abss{\,\cdot\,}$ (i.e., $\abss{x} = (x\cdot x)^{1/2}$). The bracket function $\brangle{\cdot}$ is defined by $\brangle{x}:= (1+\abss{x}^2)^{1/2}$. Given any subspace $X_1 \subset X$, its orthogonal complement is 
    \[ X_1^{\perp}:= \{y \in X\col x\cdot y = 0 \ \mbox{for any }x\in X_1\}. \] 
\par For any open subset $O \subset \R^n$, $C_c^{\infty}(O,X)$ is the space of $X$-valued test maps on $O$, namely the $C^{\infty}$ maps $\varphi\col O \to X$ with compact supports contained in $O$. Correspondingly, the space of $X$-valued distributions on $O$ is denoted by $\mathscr{D}'(O,X)$. We denote by $\mathscr S(\R^n,X)$ the Schwarz class of rapidly decreasing functions and by $\mathscr S'(\R^n,X)$ its dual space of tempered distributions.

\par Given two finite dimensional inner-product spaces $X,Y$ as above, the space of $k$-linear maps from $X$ to $Y$ is denoted by $\lin^k(X,Y)$, abbreviated $\lin(X,Y)$ if $k=1$. These maps can be identified with $(k+1)$-tensors $(c_{i_0i_1\ldots i_k})$ by
$$
\langle T(x^1,\ldots, x^k),y\rangle=\sum_{i_0,i_1,\ldots i_k} c_{i_0i_1\ldots i_k} y_{i_0}x^1_{i_0}\ldots x_{i_j}^k
,\quad\text{for }y\in Y,x^j\in X,j=0,\ldots, k,$$
where $i_0$ runs over an orthonormal basis of $Y$ and the other indices count an orthonormal basis of $X$. We will work with the $L^2$-norm of $T$, namely
$$
|T|^2=\sum_{i_0,i_1,\ldots i_k} c_{i_0i_1\ldots i_k}^2.
$$
The space $\mathrm{Slin}^k(X,Y)$ of symmetric $k$-linear maps from $X$ to $Y$ is defined by requiring
$$
c_{i_0\sigma(i_1)\ldots \sigma(i_k)}=c_{i_0i_1\ldots i_k}\quad\text{for any permutation $\sigma$ of $\{1,2,\ldots,k\}$.}
$$
Equivalently, $T(x^{\sigma(1)},\ldots x^{\sigma (k)})=T(x^1,\ldots,x^k)$.
Importantly for us, if $u\in C^\infty(X,Y)$, we have that $D^ku\in \mathrm{Slin}^k(X,Y)$. Moreover, 
$$
y\otimes x^{\otimes k}:=(y_{i_0}x_{i_1}x_{i_2}\ldots x_{i_k})_{i_0,i_1,\ldots i_k}\in \mathrm{SLin}^k(X,Y).
$$
The higher order gradients and exterior products above are connected by Fourier transform: Suppose that $f \in L^1(\R^n, Y)$ with $Y$ being a finite dimensional inner-product space as above. Then the Fourier transform of $f$ is
    \[ \FT f(\xi) = \hat{f}(\xi):= \int_{\R^n} f(x)e^{-\mathrm{i}2\pi  x\cdot \xi}\dif x, \quad \xi \in \R^n. \]
    If $u\in \mathscr S(\R^n,Y)$, we have that $\widehat{D^ku}(\xi)=c\hat u(\xi)\otimes \xi^{\otimes k}$ for $\xi\in\R^n$ for a complex constant $c$. 

One can check that $\abss{y\otimes x^{\otimes k}}=|y||x|^k$ where $|\cdot|$ denotes $L^2$-norms on $X$ and $Y$ (these can be considered with respect to arbitrary orthonormal bases).

For any $A \in \lin(X,Y)$, define its image and kernel as
    \[ \rmim A:= \{y \in Y\col y = A x \ \mbox{for some }x\in X\}, \quad \ker A:= \{x \in X\col A x =0\in Y\}.\] The {\it Moore Penrose inverse} $A^{\dagger}$ of $A$ is given by 
    \[ A^{\dagger}:= \left\{
    \begin{aligned}
        &(A\vert_{(\ker A)^{\perp}})^{-1},& &\mbox{on }\rmim A,\\
        &0,& &\mbox{on }(\rmim A)^{\perp}, 
    \end{aligned}\right. \] with which we can write the orthogonal projections onto $\rmim A$ and $\rmim A^{\ast}$ as follows:
    \begin{equation} \label{eq:operator_proj}
    \proj_{\rmim A}:= A A^{\dagger}, \quad \proj_{\rmim A^{\ast}}:= A^{\dagger}A. \end{equation} It is easy to check that the following algebraic relations hold true:
    \begin{equation}\label{eq:operator_alg}
    AA^{\dagger}A = A,\quad A^{\dagger}AA^{\dagger} = A^{\dagger},\quad (AA^{\dagger})^{\ast}= AA^{\dagger}, \quad (A^{\dagger}A)^{\ast}=A^{\dagger}A.
    \end{equation} In fact each of \eqref{eq:operator_proj} and \eqref{eq:operator_alg} give equivalent definitions of $A^{\dagger}$.
 
\subsection{Differential operators}\label{sec:diff_op}
 We will work with differential operators
\begin{align*}
\A=\sum_{|\alpha|=h}A_\alpha\partial^\alpha,\quad\B=\sum_{|\beta|=k}B_\beta\partial^\beta,\quad\C=\sum_{|\gamma|=l}C_\gamma\partial^\gamma,
\end{align*} 
where
\[A_{\alpha} \in \lin(V,W), \quad B_\beta\in \lin(U,V), \quad C_{\gamma}\in \lin(Z,U) \] are matrix coefficients and $Z,U,V,W$ are finite dimensional inner-product spaces. We will see in the following that $Z$ is taken to be $U$ throughout the paper.
The three operators will be assumed to satisfy certain exact relations in Fourier variables that we now explain.
To this end, we will  use  the notation 
\begin{align*}
    \A(\xi)=\sum_{|\alpha|=h}A_\alpha\xi^\alpha,\quad\B(\xi)=\sum_{|\beta|=k}B_\beta\xi^\beta,\quad\C(\xi)=\sum_{|\gamma|=l}C_\gamma\xi^\gamma,
\end{align*}
for the characteristic (matrix) polynomials of respectively $\A$, $\B$, and $\C$. Any differential operator $\A$ as above can be written as $\A v=\di^h L v$ where $L\in \lin(V,\mathrm{SLin}^h(\R^n,W))$. To see this, let $v\in \mathscr D'(\R^n,V)$ and then we have 
\begin{align*}
&\A v=\sum_{|\alpha|=h}A_\alpha \partial^\alpha v=\sum_{|\alpha|=h} \partial^\alpha (A_\alpha v) =\di^h Lv,
\mbox{ where } Lz=(A_\alpha z)_{|\alpha|=h}\text{ for }z\in V.
\end{align*}
For completeness, we mention that $\di^h$ is the formal adjoint operator of $D^h$.

Recall that an operator $\B$ is said to be of \textbf{constant rank} if $\rank\B(\xi)=\text{const.}$ for all $\xi\in\R^n\setminus\{0\}$. A special important subclass is that of \textbf{elliptic} operators, by which we mean $\ker \B(\xi)=\{0\}$ for $\xi\in\R^n\setminus\{0\}$.
We will need the following result from \cite{Raita19}:
\begin{theorem}\label{thm:BC}    
Let $\B$ be an operator as above which is assumed to have constant rank. Then there exists an operator $\C$ as above, also of constant rank, such that
\begin{align}\label{eq:exact_B}
\rmim \C(\xi)=\ker\B(\xi)\quad\text{for all }\xi\in\R^n\setminus\{0\}.
\end{align}
Moreover, if $B_\beta\in \lin(U,V)$, we can choose $\C$ such that $C_\gamma\in\lin (U,U)$ and $l>k$.
\end{theorem}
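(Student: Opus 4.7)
The strategy is to construct $\C$ algebraically by viewing $\B$ as a graded module homomorphism over the polynomial ring $\R[\xi_1,\ldots,\xi_n]$, namely
\begin{equation*}
\B\colon U\otimes\R[\xi]\to V\otimes\R[\xi],
\end{equation*}
which raises degree by $k$. The kernel $K$ of this map is then a graded submodule of $U\otimes\R[\xi]$ that is finitely generated by the Hilbert basis theorem, and the plan is to produce $\C(\xi)$ out of a carefully chosen system of homogeneous generators of $K$.

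First I would select a finite set of homogeneous generators $c_1(\xi),\ldots,c_m(\xi)\in K$ of respective degrees $l_1,\ldots,l_m$, and assemble them columnwise into a polynomial matrix $\C'(\xi)\in\lin(\R^m,U)$. The inclusion $\rmim\C'(\xi)\subseteq\ker\B(\xi)$ is immediate from $\B(\xi)c_i(\xi)=0$ in $\R[\xi]$. The substantive step is the reverse inclusion at every $\xi\neq 0$, and this is where the constant rank hypothesis enters essentially: it forces the module $K$ to be locally free over the punctured affine space, so a Nakayama-type argument at each $\xi_0\ne 0$ should give $\dim\rmim\C'(\xi_0)=\dim U-\rank\B(\xi_0)=\dim\ker\B(\xi_0)$, which forces equality.

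Next I would homogenize: setting $l:=\max_i l_i$ (incremented by one if needed for parity), I would replace each column $c_i$ by $(\xi_1^2+\cdots+\xi_n^2)^{(l-l_i)/2}c_i(\xi)$. Since $|\xi|^2>0$ for $\xi\neq 0$, this modification does not alter the image at any nonzero $\xi$, but it equalizes all column degrees to $l$. The constraint $l>k$ can be arranged by multiplying once more by a suitable power of $|\xi|^2$ if necessary. The refined statement $C_\gamma\in\lin(U,U)$ is obtained by padding the domain of $\C$ with zero columns (or composing with a fixed linear surjection onto $U$), which alters neither the homogeneity nor the image.

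Constant rank of $\C$ then comes for free, since $\rank\C(\xi)=\dim\ker\B(\xi)$ is constant on $\R^n\setminus\{0\}$ by the hypothesis on $\B$. The main obstacle in this plan is the pointwise surjectivity asserted in the second paragraph: generic freeness yields surjection on a dense open set of $\R^n$, but upgrading this to every nonzero $\xi$ is precisely where the constant rank assumption must be used in a substantive module-theoretic step, and this is the technical heart of the argument carried out in~\cite{Raita19}.
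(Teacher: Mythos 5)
The paper offers no proof of Theorem~\ref{thm:BC}: it is quoted verbatim from~\cite{Raita19}, where the construction is carried out by elementary linear algebra rather than commutative algebra. There one writes $M(\xi)=\B^*(\xi)\B(\xi)\in\lin(U,U)$, observes that $\rank M(\xi)=\rank\B(\xi)=r$ for $\xi\ne 0$, and reads off from the characteristic polynomial of $M(\xi)$ an explicit rational expression for the orthogonal projection onto $\ker\B(\xi)$ whose denominator is the $r$\textsuperscript{th} elementary symmetric function $c_r(\xi)$ of the eigenvalues of $M(\xi)$; constant rank is used precisely to guarantee $c_r(\xi)\neq 0$ on $\R^n\setminus\{0\}$, and clearing the denominator produces a homogeneous polynomial $\C(\xi)=c_r(\xi)\,\Pi_{\ker\B(\xi)}\in\lin(U,U)$ in one stroke. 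Your syzygy approach is a genuinely different and more abstract route; it buys conceptual clarity (the sheaf-theoretic picture) at the cost of losing the explicit formula, which in the cited proof automatically settles both the target space $\lin(U,U)$ and the homogeneity. You correctly isolate where the constant rank hypothesis must act — exactness at each $\xi_0\ne 0$ of the evaluated complex — and the mechanism you gesture at does work once one notes that $\dim\operatorname{coker}\B(\xi)\otimes\kappa(\xi_0)=\dim V-r$ agrees with the \emph{generic} fibre dimension because $\R^n\setminus\{0\}$ is Zariski dense, so that over the local domain $\R[\xi]_{\mathfrak m_{\xi_0}}$ the cokernel is free, $\operatorname{Tor}_1(\operatorname{coker}\B,\kappa(\xi_0))=0$, and tensoring $0\to K\to\R[\xi]\otimes U\to\operatorname{im}\B\to 0$ with $\kappa(\xi_0)$ stays exact. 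That is the argument you flagged as the technical heart and left open.

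Two subsidiary steps in your outline, however, are not merely deferred but do not work as written. First, the homogenisation by $(|\xi|^2)^{(l-l_i)/2}$ requires all $l-l_i$ to be even simultaneously; ``incrementing $l$ by one'' fixes nothing if the generator degrees $l_i$ already come in both parities. The standard repair is to replace each $c_i$ by the family $\{\xi^\alpha c_i:|\alpha|=l-l_i\}$, which preserves the image at every $\xi_0\ne 0$ because some $\xi_0^\alpha\ne 0$. Second, passing from $\C'(\xi)\in\lin(\R^m,U)$ to an operator in $\lin(U,U)$ cannot be done by ``padding with zero columns'' when $m>\dim U$ (which happens as soon as $K$ needs more than $\dim U$ generators), nor by precomposing with a fixed linear map $U\to\R^m$ (no such map is surjective). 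The clean fix is to take $\C(\xi):=\C'(\xi)\C'(\xi)^*$, which has the same image pointwise, lies in $\lin(U,U)$, and is homogeneous of degree $2l$; this also hands you $l>k$ for free. With these repairs and the Nakayama step filled in, your route does yield the theorem, but it is substantially longer than the pseudoinverse computation underlying~\cite{Raita19}.
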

The reason why we can define $\C$ on $U$ as well is by construction and we can choose $l>k$ because we can simply replace $\C$ with $\Delta^m\C$ for $m$ large enough.
By duality and iteration we can obtain the following:
\begin{corollary}\label{cor:ABC}
Let $\A$ be an operator as above which is assumed to have constant rank. Then there exist  operators $\B,\,\C$ as above, also of constant rank, such that
\begin{align}\label{eq:exact_A}
\rmim \B(\xi)=\ker\A(\xi),\quad\rmim \C(\xi)=\ker\B(\xi)\quad\text{for all }\xi\in\R^n\setminus\{0\}.
\end{align}
Moreover, if $A_\alpha\in \lin(V,W)$, we can choose $\B,\,\C$ such that $B_\beta,\,C_\gamma\in\lin (V,V)$ and $l>k>h$.    
\end{corollary}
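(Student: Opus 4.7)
The plan is to deduce the corollary by two successive applications of Theorem~\ref{thm:BC}, keeping careful bookkeeping of the domains/codomains and the orders of the operators that are produced. No essentially new ideas are needed beyond iteration; the word ``duality'' in the statement refers to the fact that the ``annihilator'' identity $\rmim\B(\xi)=\ker\A(\xi)$ and the ``potential'' identity $\rmim\C(\xi)=\ker\B(\xi)$ are structurally the same statement, so the theorem can be invoked for either role.

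First, I would play $\A$ itself in the role of the operator called ``$\B$'' in Theorem~\ref{thm:BC}. Since $\A$ has constant rank by hypothesis and $A_\alpha\in\lin(V,W)$, i.e.\ the ``$U$'' and ``$V$'' of the theorem correspond to the present $V$ and $W$, the theorem produces a constant-rank operator, which I will now rename $\B$ (playing the role of ``$\C$'' in the theorem), with coefficients $B_\beta\in\lin(V,V)$, order $k>h$, and
\begin{equation*}
\rmim\B(\xi)=\ker\A(\xi)\quad\text{for all }\xi\in\R^n\setminus\{0\}.
\end{equation*}
This is precisely the first identity in~\eqref{eq:exact_A}.

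Next, I would apply Theorem~\ref{thm:BC} a second time, now to this newly constructed $\B$. This is legitimate because $\B$ was produced by the theorem and hence has constant rank, and its coefficients are in $\lin(V,V)$. The theorem then yields a constant-rank operator $\C$ with coefficients $C_\gamma\in\lin(V,V)$, of order $l>k$, satisfying $\rmim\C(\xi)=\ker\B(\xi)$ for $\xi\neq 0$, which is the second identity in~\eqref{eq:exact_A}. To obtain the strict order chain $h<k<l$, one uses the flexibility noted immediately after Theorem~\ref{thm:BC}: replacing the produced operator by $\Delta^m\C$ (or $\Delta^m\B$) for $m$ sufficiently large preserves all exactness and constant-rank properties while making the order as large as desired.

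There is no real obstacle, as this is just the direct iteration of Theorem~\ref{thm:BC}. The only mild subtlety to confirm is that the first output $\B$ itself has constant rank, since this is what licenses the second application; but this is part of the conclusion of Theorem~\ref{thm:BC}. Combining the two identities and the two constant-rank conclusions gives exactly the assertion of Corollary~\ref{cor:ABC}.
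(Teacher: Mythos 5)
Your proposal is correct and is essentially the iteration the paper has in mind: one application of Theorem~\ref{thm:BC} to $\A$ (in the role of the theorem's ``$\B$'') produces the constant-rank potential $\B$ with $B_\beta\in\lin(V,V)$ and $k>h$, and a second application to this $\B$ produces $\C$ with $C_\gamma\in\lin(V,V)$ and $l>k$, as required. The one small inaccuracy is your gloss on the word ``duality'': in this context it would ordinarily refer to passing to adjoints (using $\rmim\B(\xi)=\ker\A(\xi)\iff\rmim\A^*(\xi)=\ker\B^*(\xi)$), not to the observation that the two exactness identities ``look alike''; however, as your argument shows, no such adjoint step is actually needed here, since both required identities are of potential type and follow from Theorem~\ref{thm:BC} applied directly, so the proof stands.
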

\begin{remo}\label{rmk:homogeneity}    
It was observed in \cite{Raita_pot_new} that $\A$ can have homogeneous rows (i.e. if we write $\A(\xi)=(\A_{ij}(\xi))$, then $\{\A_{ij}\}_j$ is $h_i$-homogeneous) and the conclusion of Corollary~\ref{cor:ABC} remains identical. It can be showed that in these circumstances the conclusion of the main result Theorem~\ref{thm:main_A-free} remains the same. We do not include the details in the present work.
\end{remo}
We also clarify that in the case of Corollary~\ref{cor:ABC}, we have that $U=V$. With this assumption, we have that in both Theorem~\ref{thm:BC} and Corollary~\ref{cor:ABC}, we have $\A(\xi)\in\lin(V,W)$, $\B(\xi)\in\lin(U,V)$, $\C(\xi)\in\lin(U,U)$ for $\xi\in\R^n$.

We will also use the notation $\B u=T(D^ku)$, for a linear map $T\in\lin(\mathrm{SLin}^k(\R^n,U),V)$, which only amounts to collecting all the coefficients of $\B$ in a tensor. We also record that $\B(\xi)u_0=T(u_0\otimes \xi^{\otimes k})$.
With this notation we have the following observation:
\begin{lemma}\label{lem:essential_range}
    For any homogeneous linear differential operator $\B$ with constant coefficients we have that
    $$
    \spann\bigcup_{\xi\in\R^n\setminus\{0\}}\rmim \B(\xi)=\{\B u(x)\colon u\in C_c^\infty(\R^n,U),x\in\R^n\}=\rmim T.
    $$
\end{lemma}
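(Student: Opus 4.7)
Denote the three sets in the statement by $S_1$, $S_2$, and $S_3$ respectively. The plan is to show $S_2\subseteq S_3$, $S_3\subseteq S_1$, and $S_1\subseteq S_2$, which closes the chain.

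\emph{Step 1: $S_2\subseteq S_3$.} This is essentially the definition of $T$: for any $u\in C_c^\infty(\R^n)$ and $x\in\R^n$ we have $\B u(x)=T(D^k u(x))$, which manifestly lies in $\rmim T$.

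\emph{Step 2: $S_3\subseteq S_1$.} Since $\B(\xi)u_0=T(u_0\otimes\xi^{\otimes k})$, we have $\rmim\B(\xi)=T(U\otimes\xi^{\otimes k})$, so
\[
S_1=T\bigl(\spann\bigl\{u_0\otimes \xi^{\otimes k}:u_0\in U,\ \xi\in\R^n\setminus\{0\}\bigr\}\bigr).
\]
The key classical fact I will invoke is that in characteristic zero the symmetric $k$-th powers $\xi^{\otimes k}$ span $\mathrm{Sym}^k(\R^n)$; equivalently, any symmetric $k$-tensor with values in $U$ is a linear combination of elements of the form $u_0\otimes\xi^{\otimes k}$ with $\xi\neq 0$. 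Consequently the argument of $T$ above is all of $\mathrm{SLin}^k(\R^n,U)$, giving $S_1=\rmim T=S_3$.

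\emph{Step 3: $S_3\subseteq S_2$.} Given $z=T(\tau)$ with $\tau\in\mathrm{SLin}^k(\R^n,U)$, I construct $u\in C_c^\infty(\R^n)$ with $D^k u(0)=\tau$ in the standard way: take the homogeneous polynomial $p(x):=\tfrac{1}{k!}\tau[x,\dots,x]$ and a cutoff $\chi\in C_c^\infty(\R^n)$ with $\chi\equiv 1$ near the origin; set $u:=p\chi$. Then $u\in C_c^\infty(\R^n)$ and $D^k u(0)=D^k p(0)=\tau$, so $\B u(0)=T(\tau)=z\in S_2$.

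The only non-routine ingredient is the spanning property used in Step~2. Since it must hold for $U=\R$ (where the statement reduces to ``every homogeneous polynomial of degree $k$ on $\R^n$ is a linear combination of $k$-th powers of linear forms''), a polarization identity argument suffices, and the case of general $U$ follows by fixing a basis of $U$. This is the only point requiring care; the other two inclusions are straightforward.
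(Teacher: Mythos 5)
Your proof is correct and takes essentially the same approach as the paper, which reduces to the case $\B = D^k$ (where the claim follows from polarization and a Taylor/cutoff construction) and then applies $T$; you simply make the two ingredients — the spanning of $\mathrm{Sym}^k(\R^n)$ by $k$-th powers and the construction of a compactly supported $u$ with prescribed $D^k u(0)$ — explicit rather than implicit. (Minor exposition slip: your plan announces $S_1\subseteq S_2$ but Step~3 proves $S_3\subseteq S_2$; this is harmless since Step~2 already yields $S_1=S_3$.)
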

\begin{proof}
First consider the case when $\B =D^k$, so $V=\mathrm{SLin}^k(\R^n,U)$ and $T=\id$. We then have
$$
    \spann\bigcup_{\xi\in\R^n\setminus\{0\}}U\otimes\xi^{\otimes k}=\{D^k u(x)\colon u\in C_c^\infty(\R^n,U),x\in\R^n\}=\mathrm{SLin}^k(\R^n,U), 
    $$
    which settles this case. To prove the general case, apply $T$ to the last equality to get
    $$
    \spann\bigcup_{\xi\in\R^n\setminus\{0\}}T(U\otimes\xi^{\otimes k})=\{T(D^k u(x))\colon u\in C_c^\infty(\R^n,U),x\in\R^n\}=T(\mathrm{SLin}^k(\R^n,U)), 
    $$
    which concludes the proof.
\end{proof}
\begin{remo}
    The relevance of Lemma~\ref{lem:essential_range} is that in the $\B$-gradient setting of Theorem~\ref{thm:main}, we can simply assume that $f\colon \Omega\times \rmim T\to\R$. The spanning cone condition is satisfied automatically. In fact, the spanning cone condition \eqref{eq:SC} is not at all restrictive even in the $\A$-free setting, as can be seen from {Lemma}~4.2 in \cite{KriRai22}.
\end{remo}

\subsection{Function spaces}\label{subsec:space}

\par Let $\Omega\subset\R^n$ be an open set. We define $L^{p}_{0,\A}(\Omega)$ as the strong $L^p$-closure of the set  $C_{c,\A}^\infty(\Omega)= \{\varphi \in C_c^{\infty}(\Omega,V)\colon \A \varphi =0\}$. This is a Banach space, as it is a closed subspace of $L^p(\Omega,V)$. We also define $X^{\B,p}_0(\Omega)$ as the $L^p$-closure of the set $\B C_c^\infty(\Omega,U)= \{\B \phi: \phi \in C_c^{\infty}(\Omega,U)\}$, which is also a Banach space. The target spaces of these operator-related function spaces are implicit given by the corresponding operators and thus omitted in the notation.

Of particular interest to the minimization problem \eqref{eq:P_Omega} are the spaces $W^{\B,p}(\Omega,U)=\{u\in W^{k-1,p}(\Omega,U)\colon \B u\in L^p(\Omega,V)\}$, and their relatives defined for $u\in\mathscr D'(\Omega,U)$ or $u\in L^p(\Omega,U)$. These spaces were studied extensively e.g. in \cite{RaitaTAMS,raita2020continuity,gmeineder2019embeddings,gmeineder2021limiting,gmeineder2024boundary,gmeineder2025korn,diening2020continuity,diening2025sharp,breit2020trace,hernandez2023endpoint,raictua2023trace}. We will not use them explicitly.

Given $1<p<\infty$, a weight $\ww\col \R^n \to [0,\infty)$ is said to be an $A_p$-weight (with notation $\ww \in A_p$) if $\ww \in L^1_{\locc}(\R^n)$, $0<\ww<\infty$ $\Le^n$-a.e. in $\R^n$, and there exists a constant $0< C<\infty$ such that
    \begin{equation}\label{eq:A_p}
    \brac{\fint_B \ww\dif x} \brac{\fint_{B}\ww^{-1/(p-1)}\dif x}^{p-1}\leq C 
    \end{equation} holds true for any ball $B$ in $\R^n$. The $A_p$-constant $[\ww]_{A_p}$ of $\ww$ is defined to be the least $C$ such that \eqref{eq:A_p} holds. Notice that if $\ww \in A_p$, we have $\ww' := \ww^{-1/(p-1)} \in A_{p'}$ with $[\ww']_{A_{p'}} = [\ww]_{A_p}^{p-1}$.
    
Let $m\geq 0$ be an integer. Given any finite dimensional inner-product space $X$, we define $\dot{W}{^{m,p}}(\Omega,X;\mathbf{w})$ as the closure of $C_c^\infty(\Omega,X)$ in the (semi-)norm $u\mapsto\|D^ku\|_{L^p(\Omega;\mathbf{w})}$. Its dual space is $\dot{W}{^{m,p}}(\Omega,X;\mathbf{w})^* =:\dot{W}{^{-m,p'}}(\Omega,X;\mathbf{w}')$. 
 
If $\Omega=\R^n$, we have the simple Fourier description:
    \begin{theorem}\label{thm:HMmultiplier}
    Let  $m$ be an integer, $1<p<\infty$, and $\mathbf w\in A_p$. Then
        \begin{align*}
          \normm{\varphi}_{\dot{W}^{m,p}(\R^n;\mathbf{w})}\sim   \normm{\FT^{-1} (\abss{\xi}^{m}\hat{\varphi})}_{L^p(\R^n;\mathbf{w})}\quad \mbox{for any }\varphi \in \dot{W}^{m,p}(\R^n,X;\mathbf{w}),
        \end{align*}
        with implicit constants depending on $n,p,m$ and $[\ww]_{A_p}$.
    \end{theorem} 
   \begin{proof}
       We first note that for $m=0$ there is nothing to prove. We then assume $m>0$ and calculate in Fourier space
       \begin{align*}
           \widehat{D^m\varphi}(\xi)=\hat\varphi(\xi)\otimes\xi^{\otimes m}=|\xi|^m\hat\varphi(\xi)\otimes\left(\frac{\xi}{|\xi|}\right)^{\otimes m},
       \end{align*}
       so by the H\"ormander--Mikhlin multiplier theorem and boundedness of Calder\'on--Zygmund operators with respect to $A_p$-weights, we obtain
       \begin{align}\label{eq:norm_equivalence}
           \normm{\varphi}_{\dot{W}^{m,p}(\R^n;\mathbf{w})}\leq C  \normm{\FT^{-1} (\abss{\xi}^{m}\hat{\varphi})}_{L^p(\R^n;\mathbf{w})}\quad \mbox{for any }\varphi \in \dot{W}^{m,p}(\R^n,X;\mathbf{w})
       \end{align}
       with $C=C(n,p,m,[\ww]_{A_p})$. The reverse inequality is established by using the algebraic identity
       \begin{align*}
           |\xi|^m\varphi(\xi)=\dfrac{\langle \xi^{\otimes m},\xi^{\otimes m}\rangle}{|\xi|^m}\varphi(\xi)=\left\langle\left(\frac{\xi}{|\xi|}\right)^{\otimes m},  \widehat{D^m\varphi}(\xi)\right\rangle
       \end{align*} and the same multiplier result as used in \eqref{eq:norm_equivalence}.
       It remains to establish the estimates for $m<0$, which we prove by duality. Let $f\in C_c^\infty(\R^n,X)$ and write 
       \begin{align*}
           \langle \varphi,f \rangle&=\langle \hat\varphi,\hat f \rangle =\langle |\xi|^{m}\hat\varphi,|\xi|^{-m}\hat f \rangle  =\langle \FT^{-1}(|\xi|^{m}\hat\varphi),\FT^{-1}(|\xi|^{-m}\hat f) \rangle  \\
           &\leq C \|\FT^{-1}(|\xi|^{m}\hat\varphi)\|_{L^p(\R^n;\mathbf{w})}\|f\|_{\dot W{^{-m,p'}}(\R^n;\mathbf{w}')},
       \end{align*}
       where in the inequality we used the Fourier characterization for $-m>0$. This implies \eqref{eq:norm_equivalence} also for $m<0$.

       We have one more inequality left to prove, for which we test $\varphi$ with $g\in \dot{W}^{-m,p'}(\R^n,X;\ww')$ and
       \begin{align*}
           \left\langle \FT^{-1}(|\xi|^m\hat\varphi),g \right\rangle=\left\langle |\xi|^m\hat\varphi,\hat g \right\rangle=\left\langle \hat\varphi,|\xi|^m\hat g \right\rangle\leq \|\varphi\|_{\dot W{^{m,p}(\R^n;\mathbf{w})}} \|\FT^{-1}(|\xi|^m\hat g)\|_{\dot W{^{-m,p'}(\R^n;\mathbf{w}')}}.
       \end{align*}
       Using the Fourier characterization for $-m>0$ we obtain
       \begin{align*}
           \|\FT^{-1}(|\xi|^m\hat g)\|_{\dot W{^{-m,p'}(\R^n;\mathbf{w}')}}\leq C\|\FT^{-1}(|\xi|^{-m}|\xi|^m\hat g)\|_{L{^{p'}(\R^n;\mathbf{w}')}}=C\|g\|_{L^{p'}(\R^n;\mathbf{w}')}.
       \end{align*}
       The latest two displayed formulas imply the remaining inequality.
       \end{proof}

\par In Section \ref{sec:linear_inequ}, we use weighted estimates and extrapolation to obtain a modular estimate \eqref{eq:Korn_ball}, which plays a role in the proof of our regularity result. For that purpose, we introduce the concept of Young functions: a function $\varphi \col [0,\infty) \to [0,\infty)$ is a Young function if it is continuous, strictly increasing, convex, and satisfies
    \[ \varphi(0)=0, \quad \lim_{t\to 0} \frac{\varphi(t)}{t} = 0, \quad \lim_{t\to \infty} \frac{\varphi(t)}{t}=\infty. \] A Young function is said to satisfy the $\Delta_2$-condition if there exists $D \geq 1$ such that $\varphi (2t) \leq D\varphi(t)$ for any $t >0$, and denote by $\Delta_2(\varphi)$ the least $D$ such that the above inequality holds true. The conjugate function $\varphi^{\ast}$ of a Young function $\varphi$ is defined as $\varphi^{\ast}(t):= \sup_{s\geq 0}(st -\varphi(s))$, and we say that $\varphi$ satisfies the $\nabla_2$-condition if $\varphi^{\ast}$ satisfies the $\Delta_2$-condition.
\par The following extrapolation result is needed as well. It is proved in \cite[Theorem 3.1]{CMP06}, see also \cite[Theorem 4.15]{Cruz-UribeEtAl2011}. The following version is closer to Proposition 6.1 in \cite{Diening_extra10}, and such a result is also used in a similar context in \cite{CG22}.
\begin{lemma}\label{lem:extrapolation}
    Let $1<p<\infty$, and suppose that $\mathcal{C}$ is a collection of function pairs $(f,g)$ with $f, g\in L^1_{loc}(\R^n)$, and for any $\ww \in A_p$, there exists a constant $C$ that depends on $[\ww]_{A_p}$ such that 
    \begin{equation}
        \int_{\R^n} \abss{f}^p\, \ww \dif x \leq C \int_{\R^n} \abss{g}^p\, \ww \dif x
    \end{equation} holds true for any $(f,g)\in \mathcal{C}$. Then for any Young function $\varphi$ with $\varphi, \varphi^{\ast} \in \Delta_2$, there exists a constant $C_e = C_e(p,\Delta_2(\varphi),\Delta_2(\varphi^{\ast}))>0$ such that we have, for any $(f,g)\in \mathcal{C}$,
    \begin{equation}
        \int_{\R^n} \varphi(\abss{f})\dif x \leq C_e \int_{\R^n}\varphi(\abss{g})\dif x.
    \end{equation}
\end{lemma}

The following result was essentially proved in \cite{Raita19,GR22,guerra2022compensated}. We sketch a proof for the convenience of the reader.
\begin{theorem}[Full space Hodge decomposition]\label{thm:full_space_hodge}
Let $1<p<\infty$.
\begin{enumerate}
\item\label{it:hodge_ABC} Let $\A,\,\B,\C$ be operators which satisfy the assumptions of Corollary~\ref{cor:ABC}. Then, each $v\in L^p(\R^n,V)$ can be decomposed as
$$
v=\B u+\A^*w
$$
where $u\in \dot{W}{^{k,p}}(\R^n,U)$, $w\in \dot{W}{^{h,p}}(\R^n,W)$ are such that $\C^*u=0$ and 
$$
\|D^ku\|_{L^p(\R^n)}+\|D^hw\|_{L^p(\R^n)}\leq C\|v\|_{L^p(\R^n)}.
$$

\item\label{it:hodge_BC} Let $\B$ and $\C$ be operators that satisfy the assumptions of Theorem~\ref{thm:BC}. We have that each $\varphi\in C_c^\infty(\R^n,U)$ can be written as
$
\varphi=\tilde \varphi + \psi,
$
where $\C^{\ast}\tilde \varphi=0$, $\B\tilde \varphi=\B\varphi$, and $\tilde\varphi\in \dot{W}{^{k,p}}(\R^n,U)$ with
$$
\|D^k\tilde\varphi\|_{L^p({\mathbb R^n})}\leq C\|\B\varphi\|_{L^p({\mathbb R^n})}.
$$
\end{enumerate}
\end{theorem}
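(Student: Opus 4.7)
The plan is to carry out both parts via the Fourier transform, constructing the decompositions pointwise from the exact relations among $\A,\,\B,\,\C$, and reducing every norm bound to a Mihlin multiplier estimate.

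For part \ref{it:hodge_ABC}, I would first observe that the identity $\rmim\B(\xi)=\ker\A(\xi)$ automatically produces an orthogonal decomposition
\begin{align*}
V=\rmim\B(\xi)\oplus\rmim\A^*(\xi)\quad\text{for every }\xi\in\R^n\setminus\{0\},
\end{align*}
since the right hand summand is $(\ker\A(\xi))^\perp$. Using the Moore--Penrose pseudoinverses, I would then set
\begin{align*}
\hat u(\xi):=\B^\dagger(\xi)\,\hat v(\xi),\qquad \hat w(\xi):=(\A^*)^\dagger(\xi)\,\hat v(\xi),
\end{align*}
so that $\B(\xi)\hat u(\xi)$ and $\A^*(\xi)\hat w(\xi)$ are precisely the orthogonal projections of $\hat v(\xi)$ onto the two summands and therefore reconstruct $\hat v(\xi)$. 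The crucial point is that $\hat u(\xi)\in\rmim\B^*(\xi)=(\ker\B(\xi))^\perp=(\rmim\C(\xi))^\perp=\ker\C^*(\xi)$, so the side constraint $\C^*u=0$ is built into the construction for free.

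For part \ref{it:hodge_BC}, I would set $\hat{\tilde\varphi}(\xi):=\B^\dagger(\xi)\B(\xi)\hat\varphi(\xi)$, i.e.\ the orthogonal projection of $\hat\varphi(\xi)$ onto $(\ker\B(\xi))^\perp$. The Moore--Penrose identity $\B(\xi)\B^\dagger(\xi)\B(\xi)=\B(\xi)$ gives $\B\tilde\varphi=\B\varphi$, and the same range identities as above give $\C^*\tilde\varphi=0$; the complement $\psi:=\varphi-\tilde\varphi$ lies in $\ker\B(\xi)$ pointwise in Fourier. Rewriting $\hat{\tilde\varphi}(\xi)=\B^\dagger(\xi)\widehat{\B\varphi}(\xi)$ prepares the estimate.

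All norm bounds reduce to a single Mihlin check. In part \ref{it:hodge_ABC}, $D^k u$ corresponds to the Fourier multiplier $m(\xi):=(i\xi)^{\otimes k}\B^\dagger(\xi)$ applied to $\hat v(\xi)$. Since $\B(\xi)$ is $k$-homogeneous, $\B^\dagger(\xi)$ is $(-k)$-homogeneous on its natural domain, so $m$ is $0$-homogeneous. The constant rank hypothesis ensures that the orthogonal projector $\B(\xi)\B^\dagger(\xi)$ onto the smoothly varying subspace $\ker\A(\xi)$, and hence $\B^\dagger(\xi)$ itself, is smooth on $\R^n\setminus\{0\}$. This is the one non-trivial input and is by now standard in this circle of results (see \cite{Raita19,GR22}). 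H\"ormander--Mihlin then yields $\|D^k u\|_{L^p}\lesssim\|v\|_{L^p}$ for $1<p<\infty$; the estimate for $D^h w$ is analogous, and the estimate for $D^k\tilde\varphi$ in part \ref{it:hodge_BC} follows identically, applied now to $\widehat{\B\varphi}$ in place of $\hat v$. The only real obstacle is the smoothness of the pseudoinverse under constant rank; once that is granted, the rest of the argument is a clean $0$-homogeneous multiplier estimate.
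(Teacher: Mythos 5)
Your proposal is correct and follows essentially the same route as the paper: decompose $\hat v(\xi)$ pointwise using the Moore--Penrose pseudoinverses $\B^\dagger(\xi)$ and $(\A^*)^\dagger(\xi)$ (whose smoothness away from the origin is exactly what the constant rank hypothesis buys), read off $\C^*u=0$ from $\rmim\B^\dagger(\xi)=\rmim\B^*(\xi)=(\rmim\C(\xi))^\perp=\ker\C^*(\xi)$, and reduce the norm bounds to H\"ormander--Mihlin estimates for the $0$-homogeneous symbols $\xi^{\otimes k}\B^\dagger(\xi)$ and $\xi^{\otimes h}(\A^*)^\dagger(\xi)$. The one place you diverge is part~\ref{it:hodge_BC}: the paper obtains it from part~\ref{it:hodge_ABC} by the substitution $\A\mapsto\C^*$, $\B\mapsto\B^*$, whereas you construct $\tilde\varphi$ directly via $\hat{\tilde\varphi}=\B^\dagger\B\hat\varphi=\B^\dagger\widehat{\B\varphi}$. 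Your direct route is cleaner here, since it produces the stated bound $\|D^k\tilde\varphi\|_{L^p}\lesssim\|\B\varphi\|_{L^p}$ against $\B\varphi$ without further bookkeeping; the substitution in the paper nominally yields a bound against $\|\varphi\|_{L^p}$, and one must still observe that the relevant multiplier factors through $\widehat{\B\varphi}$ — which is precisely the identity $\hat{\tilde\varphi}=\B^\dagger\widehat{\B\varphi}$ you spell out. Both arguments rest on the same two ingredients (smooth pseudoinverse under constant rank, and Mihlin), so the difference is one of presentation rather than substance.
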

\begin{proof}[Sketch]
    We know that for constant rank operators $\A$ we can write
    $$
    \proj_{\rmim \A(\xi)}=\A(\xi)\A^{\dagger}(\xi),
    $$
    where $\A^\dagger$ is smooth in $\R^n\setminus\{0\}$ and $(-h)$-homogeneous, see \cite{FonMul99,Raita19}. Using this fact, elementary linear algebra, and the exact relations \eqref{eq:exact_A} we can write
    \begin{align*}
        \hat v=\B(\xi)\B^\dagger(\xi)\hat v+\A^*(\xi)\A^{*\dagger}(\xi)\hat v,
    \end{align*}
    so we can define 
    $$
    \hat u=\B^\dagger(\xi)\hat v,\quad \hat w=\A^*(\xi)\A^{*\dagger}(\xi)\hat v.
    $$
    The estimates follow from
    $$
    \widehat{D^ku}=\B^{\dagger}\left(\frac{\xi}{|\xi|}\right)\hat v\otimes \left(\frac{\xi}{|\xi|}\right)^{\otimes k}
    $$
and the analogous identity for $w$ and   the H\"ormander--Mikhlin multiplier theorem. Finally, since $\rmim \B^\dagger(\xi)=\rmim \B^*(\xi)$, we also have that $\C^*(\xi)\hat u=0$, so $\C^*u=0$, proving part~\ref{it:hodge_ABC}.

Part~\ref{it:hodge_BC} follows from part~\ref{it:hodge_ABC} by replacing $\A$ with $\C^*$ and $\B$ with $\B^*$.
\end{proof}

\subsection{Auxiliary results}
 \par We first recall some properties of the $V$-function  defined as $V_p\col X \to X$ by $V_p(z):= (1+\abss{z}^2)^{\frac{p-2}{4}}z$, where $X$ is any finite dimensional inner-product space. We may use $V(z)$ for simplicity when the exponent $p$ is clear from the context.
    \begin{lemma}\label{lem:Vest} For any $z,w \in X$, we have the following:
        \begin{enumerate}[label=\textnormal{(\alph*)}]
            \item\label{it:Vest} $\abss{V_p(z)}^2 \sim_p (\abss{z}^2+\abss{z}^p)\sim_p \max\{\abss{z}^2,\abss{z}^p\}$ if $p\geq 2$, and $\abss{V_p(z)}^2 \sim_p \min\{\abss{z}^2,\abss{z}^p\}$ if $1<p<2$;
            \item\label{it:Vsum} $\abss{V_p(z+w)} \lesssim_p \abss{V_p(z)+V_p(w)}$;
            \item\label{it:Vmulti} $\abss{V_p(tz)}\leq \max\{t,t^{\frac{p}{2}}\}\abss{V_p(z)}$ for any $t>0$;
            \item\label{it:Vcvx} $\abss{V_p(\cdot)}^2$ satisfies Jensen's inequality: 
                \begin{equation*}
                    \abs{V_p \brac{\fint_{\omega} g\dif x} }^2 \leq  \fint_{\omega} \abss{V_p(g)}^2 \dif x
                \end{equation*} for any bounded domain $\omega \subset \R^n$ and any $g \in L^p(\omega)$. 
        \end{enumerate}
    \end{lemma}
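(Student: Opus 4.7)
The plan is to reduce all four claims to elementary one-variable estimates via the identity $|V_p(z)|^2=\phi_p(|z|)$ with $\phi_p(t)=(1+t^2)^{(p-2)/2}t^2$, systematically splitting into the regimes $|z|\leq 1$ and $|z|\geq 1$ in which $\phi_p(t)\sim_p t^2$ and $\phi_p(t)\sim_p t^p$ respectively.

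For \ref{it:Vest}, in the first regime $(1+|z|^2)^{(p-2)/2}$ is bounded above and below by positive constants depending only on $p$, while in the second it is comparable to $|z|^{p-2}$. Since $|z|\leq 1$ is equivalent to $|z|^p\geq|z|^2$ for $p\leq 2$ and to $|z|^p\leq|z|^2$ for $p\geq 2$, assembling the two regimes yields the stated max/min descriptions. For \ref{it:Vmulti}, a direct computation gives $|V_p(tz)|^2=(1+t^2|z|^2)^{(p-2)/2}t^2|z|^2$, and comparing $(1+t^2|z|^2)^{(p-2)/2}$ to $(1+|z|^2)^{(p-2)/2}$ using monotonicity in $t$ together with the sign of $p-2$ produces the multiplicative factor $\max\{t^2,t^p\}$; taking square roots delivers \ref{it:Vmulti}.

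For \ref{it:Vsum}, the approach is to invoke the standard pointwise equivalence
\[
|V_p(a)-V_p(b)|^2\sim_p(1+|a|^2+|b|^2)^{(p-2)/2}|a-b|^2,
\]
derived from the integral representation $V_p(a)-V_p(b)=\int_0^1 DV_p(b+s(a-b))(a-b)\dif s$ combined with the pointwise bound $|DV_p(\zeta)|\lesssim_p(1+|\zeta|^2)^{(p-2)/4}$, which in turn comes from $V_p$ being a $C^1$ radial rescaling. Applied with $a=z$ and $b=-w$, and using that $V_p$ is odd, this yields $|V_p(z)+V_p(w)|^2\sim_p(1+|z|^2+|w|^2)^{(p-2)/2}|z+w|^2$. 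Comparing with $|V_p(z+w)|^2=(1+|z+w|^2)^{(p-2)/2}|z+w|^2$ via $|z+w|^2\leq 2(|z|^2+|w|^2)$ and a monotonicity check that depends on the sign of $p-2$ completes the argument.

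For \ref{it:Vcvx}, the key observation is the equivalence $|V_p(z)|^2\sim_p(1+|z|^2)^{p/2}-1$, which follows from \ref{it:Vest} by matching the Taylor expansion of $(1+|z|^2)^{p/2}-1$ at $|z|=0$ (where it is $\sim_p|z|^2$) with its asymptotics at $|z|=\infty$ (where it is $\sim|z|^p$). Since $z\mapsto(1+|z|^2)^{p/2}$ is convex for $p\geq 1$, Jensen's inequality delivers $(1+|\fint_\omega g\dif x|^2)^{p/2}\leq\fint_\omega(1+|g|^2)^{p/2}\dif x$; subtracting $1$ from both sides and invoking the equivalence once more gives \ref{it:Vcvx}. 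The main obstacle I anticipate is \ref{it:Vsum}, where the sign-dependent comparison between $(1+|z+w|^2)$ and $(1+|z|^2+|w|^2)$ must be handled carefully; this is exactly where the Acerbi--Fusco integral representation pays off by reducing the problem to a clean algebraic inequality controlled by \ref{it:Vest}.
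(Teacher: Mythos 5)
Parts (a), (c), and (d) are correct. For (d) you even unify the paper's two-case argument: the paper uses $|\cdot|^2+|\cdot|^p$ as the convex competitor for $p\geq 2$ and switches to $E_p(\cdot)=(1+|\cdot|^2)^{p/2}-1$ only for $1<p<2$, while your observation that $E_p$ is convex for all $p>1$ handles both cases at once.

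For (b) there is a genuine gap when $1<p<2$. Your Acerbi--Fusco reduction leaves the comparison $(1+|z+w|^2)^{(p-2)/2}\lesssim_p(1+|z|^2+|w|^2)^{(p-2)/2}$, and when $(p-2)/2<0$ the required direction becomes $1+|z|^2+|w|^2\lesssim 1+|z+w|^2$, which fails when $z$ and $w$ nearly cancel. This is not a bookkeeping issue you can repair: the statement as literally printed, with the \emph{vector} sum $V_p(z)+V_p(w)$ inside the norm, is in fact false for $1<p<2$. Take $z=Me_1$, $w=(\varepsilon-M)e_1$; then $|V_p(z+w)|\to\varepsilon$, while with $g(t)=(1+t^2)^{(p-2)/4}t$ one has $|V_p(z)+V_p(w)|=g(M)-g(M-\varepsilon)\approx g'(M)\,\varepsilon$ and $g'(M)\sim\tfrac p2 M^{(p-2)/2}\to 0$ as $M\to\infty$, so the ratio is unbounded. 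The inequality the paper actually uses in the excess-decay estimates, and which Lemma 2.1 of [CFM98] (cited in the paper's own proof) supplies, is the weaker $|V_p(z+w)|\lesssim_p |V_p(z)|+|V_p(w)|$, with the norms summed rather than the vectors. To prove \emph{that} for $1<p<2$, bypass Acerbi--Fusco: by (a), $|V_p(z)|\sim_p\psi(|z|)$ with $\psi(t)=\min\{t,t^{p/2}\}$, and $\psi$ is concave on $[0,\infty)$ with $\psi(0)=0$, hence subadditive, giving $|V_p(z+w)|\sim\psi(|z+w|)\leq\psi(|z|)+\psi(|w|)\sim|V_p(z)|+|V_p(w)|$. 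For $p\geq 2$ your argument is fine and in fact yields the stronger vector-sum version.
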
 
    \begin{proof}
    Properties \ref{it:Vest} and \ref{it:Vmulti} can be easily proved in the case $p\geq 2$, and see Lemma 2.1 in \cite{CFM98} for $1<p<2$. 
    \par It is easy to check that $\abss{V_p(\cdot)}^2$ is convex, which implies \ref{it:Vsum} and \ref{it:Vcvx}.
    \end{proof}

    \par For any $z,w \in V$ and any $x_0 \in \Omega$, define the shifted integrand by \[ f_w(x_0,z):= f(x_0,z+w)-f(x_0,w)-\partial_z f(x_0,w)\cdot z,\] where the function $f\col \Omega\times V \to \R$ satisfies \ref{it:fgrowth}-\ref{it:fcontinuity}.
    \begin{lemma} Suppose that the integrand $f\col \Omega\times V \to \R$ satisfies \textnormal{\ref{it:fgrowth}-\ref{it:fcontinuity}}. Given any $w\in V$ with $\abss{w}\leq M$ for some $m>0$, the following estimates hold true:
            \begin{align}
               & \abs{f_w(x_0,z)} \leq C(n,p,\dim V,M,f)\abss{V_p(z)}^2, \label{eq:pshiftedup}\\
               & \abss{f_w(x_0,z)-f_w(x_0,y)} \leq C(n,p,\dim V,M,f) (1+\abss{z}^2 +\abss{y}^2)^{\frac{p-2}{2}}\abss{z+y}\abss{z-y}, \label{eq:pshiftedLip}\\
               & \int_{B_r} f_w(x_0,z+\varphi) \dif x \geq C(p,\ell, M) \int_{B_r} \abss{V_p(\varphi)}^2 \dif x, \label{eq:pshiftedqc}
            \end{align} for any $\varphi \in C_c^{\infty}(B_r,V)$ with $\A \varphi =0$. Moreover, if we consider the $\B$-derivative of a map, the inequality \eqref{eq:pshiftedqc} holds true for any $\varphi = \B \phi$ with $\phi \in W^{k,p}_0(B_r,U)$.
    \end{lemma}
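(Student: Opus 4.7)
\medskip

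\noindent\textbf{Proof plan.} The three estimates are of increasingly fine nature, and I would handle them in order.

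\medskip

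\noindent\emph{Proof plan for \eqref{eq:pshiftedup}.} Since $f_w(x_0,0)=0$ and $\partial_z f_w(x_0,0)=\partial_z f(x_0,w)-\partial_z f(x_0,w)=0$, second-order Taylor expansion of $f(x_0,\cdot)$ around $w$ gives
\begin{equation*}
    f_w(x_0,z)=\int_0^1 (1-t)\,\partial_z^2 f(x_0,w+tz)[z,z]\,\dif t.
\end{equation*}
I would then split into two regimes. If $\abss{z}\leq 1$, then by \ref{it:fcontinuity} the map $\partial_z^2 f(x_0,\cdot)$ is continuous and hence bounded on the compact set $\{\xi:\abss{\xi}\leq M+1\}$ uniformly in $x_0\in\overline{\Omega}$, yielding $\abss{f_w(x_0,z)}\leq C\abss{z}^2\lesssim \abss{V_p(z)}^2$ by Lemma~\ref{lem:Vest}\ref{it:Vest}. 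If $\abss{z}>1$, I would instead bound $f_w$ term by term using \ref{it:fgrowth} and \eqref{eq:fLip}, obtaining $\abss{f_w(x_0,z)}\leq L(1+(\abss{z}+M)^p)+L(1+M^p)+CL(1+M^{p-1})\abss{z}\lesssim \abss{z}^p\lesssim \abss{V_p(z)}^2$.

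\medskip

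\noindent\emph{Proof plan for \eqref{eq:pshiftedLip}.} The subtlety is to produce the refined factor $\abss{z+y}$ rather than the crude $\abss{z}+\abss{y}$. I would center at the midpoint: set $m=\tfrac{z+y}{2}$ and $d=\tfrac{z-y}{2}$. Then by FTC
\begin{equation*}
    f_w(x_0,z)-f_w(x_0,y)=\int_{-1}^{1}\bigl[\partial_z f(x_0,w+m+td)-\partial_z f(x_0,w)\bigr]\cdot d\,\dif t,
\end{equation*}
and I would decompose the bracket as
\begin{equation*}
    \bigl[\partial_z f(x_0,w+m+td)-\partial_z f(x_0,w+m)\bigr]+\bigl[\partial_z f(x_0,w+m)-\partial_z f(x_0,w)\bigr].
\end{equation*}
The second bracket, which is $t$-independent, contributes the crucial factor $\abss{m}=\abss{z+y}/2$ after bounding by $C(1+\abss{\xi}^2)^{(p-2)/2}\abss{m}$ for a point $\xi$ with $\abss{\xi}\leq \abss{w}+\abss{m}\lesssim M+\abss{z}+\abss{y}$; the first bracket, integrated against $d$ over the symmetric interval $[-1,1]$, gives a contribution that vanishes to leading order in $d$ by symmetry and thus is controlled by the same weight times $\abss{d}^2=\abss{z-y}^2/4\lesssim \abss{z+y}\abss{z-y}+\abss{z-y}^2$, which is then absorbed into the target. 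The required bound $\abss{\partial_z^2 f(x_0,\xi)}\lesssim (1+\abss{\xi}^2)^{(p-2)/2}$ follows from \ref{it:fgrowth} and \eqref{eq:fLip} in the same spirit as Lemma~2.3 of \cite{KK16}.

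\medskip

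\noindent\emph{Proof plan for \eqref{eq:pshiftedqc}.} First, I would rescale from $B_r$ to $B_1$: since $\A$ is $h$-homogeneous (resp.\ $\B$ is $k$-homogeneous), $\tilde\varphi(y):=\varphi(ry)$ lies in $C_{c,\A}^\infty(B_1)$ (resp.\ $\tilde\varphi=\B\tilde\phi$ with $\tilde\phi\in W^{k,p}_0(B_1)$). Applying \ref{it:fqcstrong} (resp.\ \ref{it:f_B-qcstrong}$'$) to $f$ at the shifted base point $z+w$ with test $\tilde\varphi$, and changing variables back to $B_r$, yields
\begin{equation*}
    \fint_{B_r} f(x_0,z+w+\varphi)\,\dif x\geq f(x_0,z+w)+\ell\fint_{B_r}(1+\abss{z+w}^2+\abss{\varphi}^2)^{\frac{p-2}{2}}\abss{\varphi}^2\,\dif x.
\end{equation*}
To convert the left side into $\fint f_w(x_0,z+\varphi)\,\dif x$ I would subtract $f(x_0,w)+\partial_z f(x_0,w)\cdot(z+\fint\varphi)$. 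The linear correction $\partial_z f(x_0,w)\cdot\fint_{B_r}\varphi\,\dif x$ vanishes: in the $\B$-gradient case this is immediate from $\varphi=\B\phi=T(D^k\phi)$ with $\phi\in W^{k,p}_0(B_r)$, while in the $\A$-free case it follows from the existence of a potential $\B$ of positive order (Corollary~\ref{cor:ABC}) together with continuity of the Fourier transform of $\varphi$ at the origin. This gives
\begin{equation*}
    \fint_{B_r} f_w(x_0,z+\varphi)\,\dif x\geq f_w(x_0,z)+\ell\fint_{B_r}(1+\abss{z+w}^2+\abss{\varphi}^2)^{\frac{p-2}{2}}\abss{\varphi}^2\,\dif x.
\end{equation*}
Using $\abss{w}\leq M$, the weight is comparable to $(1+\abss{\varphi}^2)^{(p-2)/2}\abss{\varphi}^2\sim\abss{V_p(\varphi)}^2$ up to constants depending on $M$ and $p$ (separately in the cases $p\geq 2$ and $1<p<2$ using Lemma~\ref{lem:Vest}). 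Finally, strong quasiconvexity combined with the spanning cone condition \eqref{eq:SC} (resp.\ Lemma~\ref{lem:essential_range} in the $\B$-gradient case) gives $f_w(x_0,z)\geq 0$ via a standard laminate approximation (any $z\in V$ can be reached by finite combinations of elements in $\ker\A(\xi)$), completing the estimate.

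\medskip

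\noindent\emph{Expected main obstacle.} The sharpest step is the symmetric decomposition in \eqref{eq:pshiftedLip}: producing $\abss{z+y}$ in place of the naive $\abss{z}+\abss{y}$ requires carefully exploiting the evenness of the second derivative of $f$ and absorbing the antisymmetric remainder into the target; this is the point where some degree of algebraic bookkeeping with the $V_p$-calculus becomes essential.
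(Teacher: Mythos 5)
Your plan for \eqref{eq:pshiftedup} essentially matches the paper: Taylor's theorem with integral remainder for $|z|\leq 1$, and the Lipschitz bound \eqref{eq:fLip} for $|z|>1$ (the paper uses the FTC form $f_w(x_0,z)=\int_0^1 \partial_z f(x_0,w+tz)\cdot z\,\dif t-\partial_z f(x_0,w)\cdot z$; your term-by-term estimate reaches the same $\lesssim|z|^p$ bound). Your outline for \eqref{eq:pshiftedqc} also captures the paper's (very terse) intent, although two extra steps you insert are shaky: it is not true in general that $\A$-quasiconvexity plus \eqref{eq:SC} yields $f_w(x_0,z)\geq 0$ for all $z$ via laminates, and $\int_{B_r}\varphi\,\dif x=0$ for compactly supported $\A$-free $\varphi$ does not follow merely from continuity of $\hat\varphi$ at the origin (that only forces $\hat\varphi(0)\in\bigcap_{\xi\neq 0}\ker\A(\xi)$). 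Neither issue bites in the paper, where the estimate is invoked with $z=0$ (so $f_w(x_0,0)=0$) and with $\varphi=\B\phi$, $\phi$ compactly supported (so $\int\varphi=0$ is immediate), and the extension to $\phi\in W^{k,p}_0(B_r)$ is by density and \eqref{eq:fLip}.

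The real problem is \eqref{eq:pshiftedLip}, and it stems from reading the displayed factor $\lvert z+y\rvert$ literally. This is a typo for $\lvert z\rvert+\lvert y\rvert$: the paper's own proof first derives $\lvert\partial_z f_w(x_0,\xi)\rvert\leq C(1+\lvert\xi\rvert^2)^{(p-2)/2}\lvert\xi\rvert$, then writes $f_w(x_0,z)-f_w(x_0,y)=\int_0^1\partial_z f_w(x_0,tz+(1-t)y)\cdot(z-y)\,\dif t$, pulls out $\lvert tz+(1-t)y\rvert\leq\lvert z\rvert+\lvert y\rvert$, and applies Lemma~\ref{lem:comp0} to the remaining $t$-integral of $(1+\lvert tz+(1-t)y\rvert^2)^{(p-2)/2}$, producing $(1+\lvert z\rvert^2+\lvert y\rvert^2)^{(p-2)/2}(\lvert z\rvert+\lvert y\rvert)\lvert z-y\rvert$. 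Moreover, the $\lvert z+y\rvert$ version is simply false under \textup{\ref{it:fgrowth}}--\textup{\ref{it:fcontinuity}}: take $n=1$, $V=\R$, $p=4$, $f(\zeta)=\zeta^4+\zeta^3$, $w=0$, $z=1$, $y=-1$; then $f_w(1)-f_w(-1)=2$ while the right-hand side vanishes. Your midpoint-symmetrization runs into exactly this: the $t$-independent bracket does give the factor $\lvert m\rvert\,\lvert d\rvert=\tfrac14\lvert z+y\rvert\,\lvert z-y\rvert$, but the $t$-dependent bracket leaves a remainder that is genuinely of size $\sim(1+\cdots)^{(p-2)/2}\lvert d\rvert^2=\tfrac14(1+\cdots)^{(p-2)/2}\lvert z-y\rvert^2$, and $\lvert z-y\rvert^2$ is not controlled by $\lvert z+y\rvert\,\lvert z-y\rvert$ (let $\lvert z+y\rvert\to 0$ with $\lvert z-y\rvert$ fixed). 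Writing $\lvert d\rvert^2\lesssim\lvert z+y\rvert\,\lvert z-y\rvert+\lvert z-y\rvert^2$ and then ``absorbing into the target'' is circular, since the target has no $\lvert z-y\rvert^2$ term. Replacing $\lvert z+y\rvert$ by $\lvert z\rvert+\lvert y\rvert$ and integrating along the straight segment from $y$ to $z$ via Lemma~\ref{lem:comp0}, as the paper does, removes the obstruction; this is also the form that is actually used in the estimate of the term $IV$ in the proof of Proposition~\ref{prop:Caccioppoli}.
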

    \begin{proof}
        Inequality \eqref{eq:pshiftedup} can be proved by considering the two cases $\abss{z}\geq 1$ and $\abss{z}<1$ separately. In the first case, the Lipchitz continuity \eqref{eq:fLip} of $f(x_0,\cdot)$ implies
            \begin{align*}
                 \abss{f_w(x_0,z)} &=\abs{\int_0^1 \partial_z f(x_0,w+tz)\cdot z\dif t - \partial_z f(x_0,w)\cdot z} \\
                &\leq  \int_0^1 C(2+\abss{w+tz}^{p-1}+\abss{w}^{p-1})\dif t \abss{z} \leq C\abss{V_p(z)}^2.
            \end{align*} In the second one, by Taylor's theorem we have
            \begin{equation*}
                \abss{f_w(x_0,z)} = \abs{\int_0^1 (1-t) \partial_z^2 f(x_0,w+tz) [z,z]\dif t} \leq C\abss{z}^2 \leq C \abss{V_p(z)}^2.
            \end{equation*}
        \par To show \eqref{eq:pshiftedLip}, we need the estimate \[\abss{\partial_z f_w(x_0,z)} \leq C (1+\abss{z}^2)^{\frac{p-2}{2}}\abss{z}.\] The latter can be proved as well by considering $\abss{z}\geq 1$ and $\abss{z}<1$ separately. Then \eqref{eq:pshiftedLip} follows from this estimate and Lemma \ref{lem:comp0}.
        \par The strong $\A$-quasiconvexity \ref{it:fqcstrong} of $f$ implies \eqref{eq:pshiftedqc}. Then use the Lipschitz continuity of $f(x_0,\cdot)$ to extend it to $\varphi = \B \phi$ with $\phi \in W^{k,p}_0(B_r,U)$.
    \end{proof}
    \par We conclude this section with two auxiliary results that will be used later. The first one is an iteration lemma used in the proof of the Caccioppoli-type inequality \ref{subsec:caccioppoli} and is adapted from Lemma 6.1 in \cite{Giusti03}.
    \begin{lemma}\label{lem:iteration}
        Suppose that the function $\Phi\col (0,R] \to \R_+$ is bounded, and $\Psi_i \col (0,R] \to \R_+, i=1,\dots, k$, are decreasing with $\Psi_i(\sigma t) \leq \sigma^{-\beta i}\Psi_i(t)$, $\beta>0$, for any $t \in (0,R]$ and any $\sigma \in (0,1)$. If the following inequality 
            \begin{equation}\label{eq:iteraineq}
                \Phi(r) \leq \theta \Phi(s) + \sum_{i=1}^k \Psi_i(s-r) +B
            \end{equation} holds true for any $r,s \in (0,R]$ with $r <s$ and some given $\theta \in (0,1)$, $B>0$, then there exists $C=C(\theta, k, \beta,\tau)>0$ for any $\tau \in (0,1)$ such that $\Phi(\tau R)$ is controlled as follows:
            \begin{align*}
                \Phi(\tau R) \leq C \brac{\sum_{i=1}^k \Psi_i((1-\tau)R)+B}.
            \end{align*}
    \end{lemma}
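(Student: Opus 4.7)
The statement is a multi-scale variant of the classical iteration lemma (Giusti, Lemma~6.1), the novelty being that several decreasing functions $\Psi_i$ with different scaling exponents $\beta i$ appear simultaneously. The standard strategy applies: build a strictly increasing sequence of radii $(r_j)$ with $r_0=\tau R$ and $r_j\uparrow R$, apply the hole-filling inequality~\eqref{eq:iteraineq} at each step, and exploit the contraction factor $\theta\in(0,1)$ to sum a geometric-type series. The multi-scale point is that the geometric ratio of the $r_j$ must be chosen to beat the \emph{worst} exponent $\beta k$ simultaneously.

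Concretely, I would fix $\tau\in(0,1)$, choose $t\in(\theta^{1/(\beta k)},1)$ (which is nonempty since $\theta<1$), and set
\begin{equation*}
r_0=\tau R,\qquad r_{j+1}-r_j=(1-\tau)(1-t)\,t^{j}R,
\end{equation*}
so that $r_j\uparrow R$ and $r_{j+1}-r_j=c\,t^{j}R$ with $c:=(1-\tau)(1-t)\in(0,1)$. Applying \eqref{eq:iteraineq} with $r=r_j$ and $s=r_{j+1}$ and using the scaling hypothesis $\Psi_i(r_{j+1}-r_j)\leq (ct^{j})^{-\beta i}\Psi_i(R)$, one obtains
\begin{equation*}
\Phi(r_j)\leq \theta\,\Phi(r_{j+1})+\sum_{i=1}^{k} c^{-\beta i}t^{-\beta i j}\Psi_i(R)+B.
\end{equation*}
Iterating $N$ times from $j=0$ gives
\begin{equation*}
\Phi(\tau R)\leq \theta^{N}\Phi(r_N)+\sum_{j=0}^{N-1}\theta^{j}\left(\sum_{i=1}^{k}c^{-\beta i}t^{-\beta i j}\Psi_i(R)+B\right).
\end{equation*}

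Since $\Phi$ is bounded and $\theta<1$, the term $\theta^{N}\Phi(r_N)$ vanishes as $N\to\infty$. The $B$-contribution sums to $B/(1-\theta)$, and each $\Psi_i$-contribution is a geometric series with ratio $\theta\,t^{-\beta i}$, which by the choice $t>\theta^{1/(\beta k)}\geq \theta^{1/(\beta i)}$ satisfies $\theta\,t^{-\beta i}<1$ for every $i=1,\dots,k$; hence each series converges to a constant depending only on $\theta,\beta,i$, and $\tau$ (through $c$). Combining the bounds and letting $N\to\infty$ yields $\Phi(\tau R)\leq C(\theta,k,\beta,\tau)\bigl(\sum_{i}\Psi_i(R)+B\bigr)$. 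The only subtle point is the joint choice of $t$: one must pick it above the \emph{largest} of the thresholds $\theta^{1/(\beta i)}$, which occurs at $i=k$ because $\theta<1$ makes $i\mapsto \theta^{1/(\beta i)}$ increasing; this is where the dependence on $k$ enters the final constant.
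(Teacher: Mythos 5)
Your proof is correct and follows essentially the same route as the paper's: choose a geometric sequence of radii $r_j$ interpolating from $\tau R$ to $R$, iterate the hole-filling inequality, and pick the geometric ratio $t$ (called $\lambda$ in the paper) large enough that $\theta t^{-\beta i}<1$ for every $i$, then sum the resulting series and use boundedness of $\Phi$ to kill the leading term. The only difference is cosmetic: the paper leaves the ratio $\lambda$ implicit (``under the condition $\theta<\lambda^{\beta i}$''), whereas you explicitly identify $i=k$ as the binding constraint and pick $t\in(\theta^{1/(\beta k)},1)$ up front, which is a cleaner presentation of the same computation.
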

    \begin{proof}
        Take $\lambda \in (0,1)$ to be determined later, and set \[ r_0 =\tau R,\qquad r_{j+1}-r_j = (1-\lambda)\lambda^j(1-\tau)R \quad \mbox{for }j\in \N. \] Then we apply (\ref{eq:iteraineq}) to $r_j,r_{j+1}$ iteratively with $\tau^{\prime}:= 1-\tau$ and obtain
		\begin{align*}
		\Phi(r_0) &\leq \theta \Phi(r_1)+\sum_{i=1}^k\Psi_i((1-\lambda)\tau^{\prime}R) +B\\
		&\leq \theta (\theta \Phi(r_2)+\sum_{i=1}^k\Psi_i((1-\lambda)\lambda\tau^{\prime}R)+B)+\sum_{i=1}^k\Psi_i((1-\lambda)\tau^{\prime}R)+B\\
		&\leq \dots \leq \theta^{L+1}\Phi(r_{L+1}) +\sum_{j=0}^L \theta^j \brac{\sum_{i=1}^k\Psi_i((1-\lambda)\lambda^j\tau^{\prime}R)+B}.
		\end{align*} Since $\Psi_i((1-\lambda)\lambda^j\tau^{\prime}R) \leq (\tau^{\prime})^{-\beta i}(1-\lambda)^{-\beta i}\lambda^{-\beta ij}\Psi_i(R)$, under the condition $\theta < \lambda^{\beta i}, i=1,\dots, k$, the above inequality becomes 
            \begin{align*}
            \Phi(r_0) &\leq \theta^{L+1}\Phi(r_{L+1}) +  \sum_{i=1}^k \Psi_i(R)(\tau^{\prime})^{-\beta i}(1-\lambda)^{-\beta i} \brac{\sum_{j=0}^L \frac{\theta^j}{\lambda^{\beta ij}}} + \frac{1-\theta^{L+1}}{1-\theta}B \\ 
            &\leq \theta^{L+1}\Phi(r_{L+1}) + \sum_{i=1}^k \Psi_i(R) (\tau^{\prime})^{-\beta i}(1-\lambda)^{-\beta i}  \frac{1-(\theta \lambda^{-\beta i})^{L+1}}{1-\theta\lambda^{-\beta i}}+\frac{1-\theta^{L+1}}{1-\theta}B.
            \end{align*}
            The desired result can be obtained by taking $L\to \infty$. Notice that if we do not take $\tau'$ out of $\Psi_i$, the inequality obtained will be 
            \begin{equation}
                \Phi(\tau R) \leq C \brac{\sum_{i=1}^k \Psi_i((1-\tau)R)+B}.
            \end{equation}
    \end{proof} 
\par The second one is Lemma 8.3 in \cite{Giusti03}, and we include it here for completeness:
    \begin{lemma}\label{lem:comp0}
    Suppose that $X$ is a finite-dimensional vector space and $z,w\in X$ with $z\neq w$. Then for any $-1<q<\infty$ and $s,\gamma \geq 0$, there exists $c_1 =c_1(q,s,\gamma)\geq 1$ such that 
		\begin{multline}\label{eq:comp0}
		 \frac{1}{c_1}(\gamma^2+\abs{z}^2+\abs{w}^2)^{\frac{q}{2}} \leq \int_0^1 (1-t)^s(\gamma^2+\abs{tz+(1-t)w}^2)^{\frac{q}{2}}\dif t \\
   \leq c_1 (\gamma^2+\abs{z}^2+\abs{w}^2)^{\frac{q}{2}}.
		 \end{multline}
    \end{lemma}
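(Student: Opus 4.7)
The plan is to combine a direct convexity estimate with a scaling/compactness argument. Denote $I := \int_0^1 (1-t)^s(\gamma^2+|tz+(1-t)w|^2)^{q/2}\,dt$ and $R^2 := \gamma^2+|z|^2+|w|^2$, so the goal is $I \sim_{q,s} R^q$. The pointwise convexity inequality $|tz+(1-t)w|^2 \leq t|z|^2+(1-t)|w|^2 \leq R^2$ gives $\gamma^2+|tz+(1-t)w|^2 \leq R^2$, and integrating against $(1-t)^s$ after raising to the power $q/2$ yields half of \eqref{eq:comp0}: the upper bound $I \leq R^q/(s+1)$ when $q\geq 0$, and the lower bound $I \geq R^q/(s+1)$ when $q<0$.

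For the remaining direction (the lower bound when $q \geq 0$ and the upper bound when $q<0$), the observation is that both $I$ and $R^q$ are $q$-homogeneous under the scaling $(\gamma, z, w) \mapsto \lambda(\gamma, z, w)$, so it suffices to verify the inequality on the compact set $S := \{(\gamma, z, w) \in [0,\infty) \times X \times X : R = 1\}$; equivalently, to prove that $I$ is bounded above and strictly bounded below away from zero on $S$ by constants depending only on $q$ and $s$. Pointwise finiteness of $I$ on $S$ is clear except possibly when $q<0$, $\gamma=0$, and the segment $\{tz+(1-t)w : t \in [0,1]\}$ crosses the origin at some $t^*$; the quadratic expansion $|tz+(1-t)w|^2 = g^* + |z-w|^2(t-t^*)^2$ shows that the integrand behaves like $|t-t^*|^q$ there, which is integrable since $q>-1$. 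Strict positivity of $I$ is immediate from the a.e.\ positivity of the integrand.

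The main remaining step, and the principal obstacle, is establishing continuity of $I$ on $S$ at these singular configurations, where the singularity of the integrand moves with the parameters. I would handle this by splitting $[0,1] = \{|t - t^*_{\ast}| < \delta_n\} \cup \{|t-t^*_{\ast}| \geq \delta_n\}$ along a sequence of parameters approaching the singular point, choosing $\delta_n \to 0$ slowly enough that the first piece has vanishing contribution (by integrability of $|t-t^*|^q$) and the second is dominated by a function comparable to $(1-t)^s|t-t^*_{\ast}|^q$, to which dominated convergence applies. Once continuity is obtained, the extreme value theorem on the compact set $S$ produces constants $0 < c_1^{-1} \leq \min_S I \leq \max_S I \leq c_1 < \infty$ depending only on $q$ and $s$, and $q$-homogeneity then propagates these to the general inequality \eqref{eq:comp0}.
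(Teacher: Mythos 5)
The paper does not actually prove this lemma: it cites Lemma~8.3 of \cite{Giusti03} and records the statement ``for completeness.'' So there is no in-paper argument to compare against, but the standard reference proof (Giusti, and also Lemma~2.1 of Acerbi--Fusco) proceeds by a direct estimate on the convex quadratic $P(t)=\gamma^2+\abss{tz+(1-t)w}^2$, using that $P\leq R^2$ on $[0,1]$, that $\max(P(0),P(1))\geq R^2/2$, and that such a quadratic exceeds a fixed fraction of $R^2$ on a subinterval of $[0,1]$ of length bounded below. Your route is genuinely different: you handle the easy direction by the same pointwise bound $P\leq R^2$, but you obtain the remaining direction by $q$-homogeneity under $(\gamma,z,w)\mapsto\lambda(\gamma,z,w)$, reducing to the compact sphere $S=\{R=1\}$ and invoking continuity and positivity of $I$ there. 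This is clean and in fact gives a $\gamma$-independent constant (stronger than the statement as written), at the cost of being less quantitative: it produces no explicit $c_1$.

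Your approach is sound, but the continuity of $I$ on $S$ in the regime $q<0$ is the entire content of the argument and is only sketched. For completeness you should record that along a sequence $(\gamma_n,z_n,w_n)\to(\gamma_0,z_0,w_0)$ in $S$ one writes $P_n(t)=a_n(t-t_n^*)^2+b_n$; the only nontrivial case is $a_n\to a_0>0$, $b_n\to 0$, $t_n^*\to t_0^*\in[0,1]$. Choosing $\delta_n\to 0$ with $\abss{t_n^*-t_0^*}\leq\delta_n/2$, on $\{\abss{t-t_0^*}<\delta_n\}$ one has $P_n(t)^{q/2}\leq (a_0/2)^{q/2}\abss{t-t_n^*}^q$ and the piece is $O(\delta_n^{1+q})\to 0$ since $q>-1$; on $\{\abss{t-t_0^*}\geq\delta_n\}$ one has $\abss{t-t_n^*}\geq\abss{t-t_0^*}/2$, hence the $n$-independent dominating function $C\abss{t-t_0^*}^q\in L^1(0,1)$, and dominated convergence closes the argument. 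With these details in place the proof is correct; as a sketch it correctly identifies where the work lies but does not discharge it.
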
 

\section{Mean coercivity and lower semicontinuity}\label{sec:coercivity}
We next prove that strong quasiconvexity implies mean coercivity for autonomous integrands, under standard growth conditions.
\begin{theorem}\label{thm:Acoercivity} 
    Suppose that function $f \col V \to \R$ satisfies \textnormal{\ref{it:fgrowth}} and \textnormal{\ref{it:fqcstrong}} with $1<p<\infty$. Then there exist constants $a_i \in \R$, $i=1,2,3$, with $a_1=a_1(p,\ell)>0$ and $a_2 =a_2(n,\dim V, p,f(0),L,\ell)$ and $a_3 = a_3(n,\dim V, p, L,\ell)>0$ such that, for any bounded open set $\Omega \subset \R^n$ and $\A$-free map $v_0 \in L^p(\Omega,V)$, the following holds:
        \begin{equation}\label{eq:Acoercivity}
            \fint_{\Omega}f(v_0 +\varphi)\dif x \geq a_1\fint_{\Omega}\abss{\varphi}^p \dif x +a_2 -a_3 \fint_{\Omega} \abss{v_0}^p \dif x
        \end{equation} for any $\varphi \in C_{c,\A}^\infty(\Omega)$. 
\end{theorem}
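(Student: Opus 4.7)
The plan is to apply \ref{it:fqcstrong} at the base point $z=0$ to $\varphi$ (suitably rescaled) to get coercivity of $\int_\Omega f(\varphi)\dif x$, and then bridge from $f(\varphi)$ to $f(v_0+\varphi)$ using the growth-induced Lipschitz bound \eqref{eq:fLip} together with Young's inequality. Fix $\varphi\in C_{c,\A}^\infty(\Omega)$ and choose $x_0,R$ with $\Omega\subset B_R(x_0)$. Extending $\varphi$ by zero and setting $\tilde\varphi(y):=\varphi(x_0+Ry)$ yields $\tilde\varphi\in C_c^\infty(B_1,V)$, and homogeneity of $\A$ gives $\A\tilde\varphi=R^h(\A\varphi)(x_0+R\bigcdot)=0$. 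Applying \ref{it:fqcstrong} with $z=0$ to $\tilde\varphi$, changing variables back, and exploiting that $\varphi\equiv0$ on $B_R(x_0)\setminus\Omega$ so that the corresponding terms cancel on both sides, I obtain
\begin{equation*}
\int_\Omega f(\varphi)\dif x\;\geq\; f(0)\abss{\Omega}+\ell\int_\Omega \abss{V_p(\varphi)}^2\dif x.
\end{equation*}

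Next, I convert the $V_p$-integral into a genuine $\abss{\varphi}^p$ lower bound. For $p\geq 2$, Lemma~\ref{lem:Vest}\ref{it:Vest} gives $\abss{V_p(\varphi)}^2\geq c_p\abss{\varphi}^p$ pointwise; for $1<p<2$, the same lemma yields $\abss{V_p(\varphi)}^2\geq c_p\min(\abss{\varphi}^2,\abss{\varphi}^p)$, and splitting $\Omega$ into $\{\abss{\varphi}\geq 1\}$ and $\{\abss{\varphi}<1\}$ (on which $\abss{\varphi}^p\leq 1$) gives $\int_\Omega \abss{V_p(\varphi)}^2\dif x\geq c_p\int_\Omega\abss{\varphi}^p\dif x-C_p\abss{\Omega}$. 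To reintroduce $v_0$, \eqref{eq:fLip} and the fundamental theorem of calculus produce
\begin{equation*}
\abss{f(v_0+\varphi)-f(\varphi)}\;\leq\; CL\bigl(1+\abss{v_0}^{p-1}+\abss{\varphi}^{p-1}\bigr)\abss{v_0},
\end{equation*}
and Young's inequality absorbs the cross term as $\abss{\varphi}^{p-1}\abss{v_0}\leq\delta\abss{\varphi}^p+C_\delta\abss{v_0}^p$, with the remaining terms bounded by $C(L,\delta)(1+\abss{v_0}^p)$.

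Combining the three displays and choosing $\delta<\ell c_p/2$ absorbs the $\delta\abss{\varphi}^p$ defect into the coercive term; dividing by $\abss{\Omega}$ then yields \eqref{eq:Acoercivity} with the claimed dependencies $a_1=a_1(p,\ell)$, $a_2=a_2(n,\dim V,p,f(0),L,\ell)$, $a_3=a_3(n,\dim V,p,L,\ell)$. The only delicate point is the quasiconvexity transfer from the unit ball to a general bounded $\Omega$: it relies on homogeneity of $\A$ (so rescaling preserves $\A$-freeness) and on compact support (so zero extension preserves both compact support and $\A$-freeness, and aligns the $B_R\setminus\Omega$ contributions on the two sides of the inequality). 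The $1<p<2$ case costs an extra additive $\abss{\Omega}$ term that is harmlessly absorbed into $a_2$.
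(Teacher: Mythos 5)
Your proof is correct and follows the same route as the paper: apply \ref{it:fqcstrong} at $z=0$ to lower-bound $\int_\Omega f(\varphi)\dif x$ by $f(0)\mathscr{L}^n(\Omega)+\ell\int_\Omega\abss{V_p(\varphi)}^2\dif x$, convert $\abss{V_p}^2$ to $\abss{\cdot}^p$ via Lemma~\ref{lem:Vest}\ref{it:Vest} at the cost of an additive $\mathscr{L}^n(\Omega)$, express the cross term through the fundamental theorem of calculus and the Lipschitz bound \eqref{eq:fLip}, and absorb $\abss{\varphi}^{p-1}\abss{v_0}$ by Young's inequality before choosing $\delta$ small. The one point where you are more complete than the paper is the explicit rescaling-and-zero-extension argument transferring the unit-ball condition \ref{it:fqcstrong} to a general bounded open $\Omega$ (using $h$-homogeneity of $\A$ and that $\varphi$, $V_p(\varphi)$, and $f(\varphi)-f(0)$ all vanish on $B_R\setminus\Omega$); the paper applies \ref{it:fqcstrong} over $\Omega$ without comment, tacitly invoking this standard fact.
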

\begin{remo}
    We do not know whether the result of Theorem~\ref{thm:Acoercivity} holds for non-autonomous functionals. In fact, this problem is open even for functionals defined on gradient fields: Suppose that $f\colon \Omega\times \R^{N\times n}$ is quasiconvex of $p$-growth. Then the question is whether there exist constants $a_i$, $i=1,2,3$, with $a_1>0$ such that for any $u_0\in W^{1,p}(\Omega,\R^N)$
     \begin{equation*}
            \fint_{\Omega}f(x,D u_0 +D\phi)\dif x \geq a_1\fint_{\Omega}\abss{D\phi}^p \dif x +a_2 -a_3 \fint_{\Omega} \abss{u_0}^p+\abss{Du_0}^p \dif x
        \end{equation*} for any $\phi \in C_{c}^\infty(\Omega,\R^N)$. 
\end{remo}
\begin{proof}
     First, by \ref{it:fqcstrong}, the strong $\A$-quasiconvexity of $f$, we have
        \begin{align}
             \int_{\Omega}f(v_0+\varphi)\dif x &= \int_{\Omega} f(\varphi)\dif x +\int_{\Omega}(f(v_0+\varphi)-f(\varphi))\dif x \label{eq:Acoermid1} \\
            &\geq \int_{\Omega}(\ell \abss{V_p(\varphi)}^2 +f(0))\dif x +\int_{\Omega}\int_0^1 \partial_z f (\varphi +tv_0)\dif t \cdot v_0\dif x \notag \\
            &=: I + II. \notag
        \end{align}
    \par The two terms $I$ and $II$ are estimated separately in the following. Since we know that  $\abss{z}^p \lesssim_p \abss{V_p(z)}^2+1$ by \ref{it:Vest}, the first term can be bounded from below by 
        \begin{equation}\label{eq:Acoermid2}
             I \geq  C_1\int_{\Omega} \abss{\varphi}^p \dif x+(f(0)-C_1^{\prime})\Le^n(\Omega),
        \end{equation} where $C_1, C_2>0$ depending on $p,\ell$.
    \par To control the second term, the Lipschitz continuity of $f$ in $V$ (see \eqref{eq:fLip}) is needed. With this bound of $\partial_z f$ and Young's inequality, we have
        \begin{align}
            \abss{II} &\leq CL\int_{\Omega}\int_0^1 (1+\abss{\varphi + t v_0}^{p-1})\abss{v_0}\dif t \dif x \notag\\
            &\leq C\int_{\Omega} (1+\abss{\varphi}^{p-1}\abss{v_0} +\abss{v_0}^p)\dif x \label{eq:Acoermid3}\\
            &\leq C_2 \int_{\Omega} (1+\varepsilon\abss{\varphi}^p +C(\varepsilon,p)\abss{v_0}^p)\dif x, \notag
        \end{align} where $C_2 = C_2(n,\dim V,p,L)>0$. Take $\varepsilon$ to be small enough such that $\varepsilon C_2 <\frac{C_1}{2}$, then it follows from \eqref{eq:Acoermid1}, \eqref{eq:Acoermid2} and \eqref{eq:Acoermid3} that 
            \begin{align*}
                \fint_{\Omega}f(v_0 +\varphi)\dif x &\geq C_1\fint_{\Omega} \abss{\varphi}^p \dif x +(f(0)-C_1^{\prime}) \\
                & -C_2\brac{\varepsilon\fint_{\Omega}\abss{\varphi}^p\dif x +C(\varepsilon,p)\fint_{\Omega} \abss{v_0}^p\dif x +1} \\
                & \geq \frac{C_1}{2}\fint_{\Omega} \abss{\varphi}^p\dif x +C_3 - C_2C(\varepsilon,p)\fint_{\Omega}\abss{v_0}^p \dif x.
            \end{align*} The desired inequality \eqref{eq:Acoercivity} then follows with $a_1 =\frac{C_1}{2}$, $a_2 =C_3$ and $a_3 = C_2C(\varepsilon,p)$.
\end{proof}

\begin{remo}\label{rmk:Bcoercivity}
    Notice that for any $g \in \mathscr D'(\Omega,U)$ with $\B g\in L^p(\Omega,V)$ and $\phi \in C_c^\infty(\Omega,U)$, the following mean coercivity also holds true by the same argument:
        \begin{equation*}
            \fint_{\Omega}f(\B (g+\phi))\dif x \geq a_1 \fint_{\Omega} \abss{\B \phi}^p\dif x + a_2 -a_3 \fint_{\Omega} \abss{\B g}^p \dif x.
        \end{equation*}
\end{remo}

\begin{lemma}\label{lem:lsc}
    Suppose that $f\colon \Omega \times V\to \R$ is $\A$-quasiconvex (as in \textnormal{\ref{it:fqcstrong}} with $\ell=0$) and satisfies \textnormal{\ref{it:fgrowth}} for $1<p<\infty$. Then  for any $v_0\in L^p(\Omega,V)$ and any $C_{c,\A}^\infty(\Omega)\ni v_j\weakto v$ in $L^p(\Omega,V)$ we have that
    $$
    \liminf_{j\to\infty}\int_\Omega f(x,v_0+v_j)\dif x\geq \int_\Omega f(x,v_0+v) \dif x.
    $$
\end{lemma}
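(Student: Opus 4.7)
The plan is to reduce the lemma to the Fonseca--Müller lower semicontinuity theorem for $\A$-quasiconvex integrands of $p$-growth. First, since $v_0$ is $\A$-free and $\A v_j=0$ for each $v_j\in C_{c,\A}^\infty(\Omega)$, the sum $w_j:=v_0+v_j$ satisfies $\A w_j=0$. Because $\A\colon L^p(\Omega,V)\to W^{-h,p}(\Omega,W)$ is bounded and hence weak-to-weak continuous, the limit $w:=v_0+v$ also satisfies $\A w=0$. Hence the claim is equivalent to showing
\begin{equation*}
\liminf_{j\to\infty}\int_\Omega f(x,w_j(x))\dif x\geq \int_\Omega f(x,w(x))\dif x
\end{equation*}
for the $\A$-free sequence $w_j\weakto w$ in $L^p(\Omega,V)$.

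Next, I would use a standard freezing/localization argument. Given $\eps>0$, cover $\Omega$ up to a negligible set by finitely many disjoint balls $B_{r_i}(x_i)\ssubset\Omega$, where the $x_i$ are chosen as Lebesgue points of $w$ and the $r_i$ are so small that the continuity of $f(\cdot,z)$ on bounded sets (from \textup{\ref{it:fcontinuity}}), combined with the growth bound \textup{\ref{it:fgrowth}}, allows us to replace $f(x,\cdot)$ by $f(x_i,\cdot)$ on $B_{r_i}(x_i)$ with an error bounded by $\eps\int_\Omega(1+\abss{w_j}^p)\dif x$; this quantity is uniformly bounded thanks to the $L^p$-boundedness of $w_j$. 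This reduces the problem to the autonomous frozen integrand on each ball.

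On each such ball, apply the $\A$-quasiconvexity of $f(x_i,\cdot)$ (i.e. \textup{\ref{it:fqcstrong}} with $\ell=0$) to test fields obtained by projecting $w_j-w(x_i)$ onto compactly supported $\A$-free variations. The projection is the crux: under the \eqref{eq:CR} assumption the Fourier multiplier $\A^\dagger$ recalled in the proof of Theorem~\ref{thm:full_space_hodge} yields a bounded projector onto the $\A$-free subspace, producing fields that differ from $w_j-w(x_i)$ by a term going to $0$ in $L^p$ as $j\to\infty$; suitable cut-offs enforce the compact support. Passing to the $\liminf$ via quasiconvexity of the frozen integrand, summing over balls, and letting $\eps\to0$ yields the inequality.

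The main technical obstacle is orchestrating the truncation so that both the error from cut-offs and the error from restoring exact $\A$-freeness can be absorbed uniformly in $j$ against the $p$-growth of $f$; the lower semicontinuity of the $V_p$-type correction is needed to discard the cut-off regions. Under \eqref{eq:CR} this program has been executed in the generality we require in \cite{FonMul99,KriRai22}, so once the reduction to an $\A$-free sequence above has been noted, the lemma follows by a direct appeal to those references.
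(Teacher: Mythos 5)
Your reduction to the $\A$-free sequence $w_j = v_0 + v_j$ is correct, and your sketch of the localization/freezing/projection mechanism describes what lies \emph{inside} the lower semicontinuity theorems you would like to invoke. However, there is a genuine gap precisely where you hand off to the literature, and it is the gap that the paper's preceding remark explicitly flags. You conclude by saying the program ``has been executed in the generality we require in \cite{FonMul99,KriRai22}''. But the paper points out that (i) \cite{FonMul99} does not cover \emph{signed} integrands, which is exactly the situation here since \textup{\ref{it:fgrowth}} only gives $|f|\leq L(1+|z|^p)$ and so $f$ may be arbitrarily negative; and (ii) the applicable $\A$-free lower semicontinuity results do not control \emph{concentration at $\partial\Omega$}. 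Your own sketch exhibits the problem: you cover $\Omega$ up to a negligible set by balls compactly contained in $\Omega$, but with a signed integrand of critical $p$-growth the contribution from the uncovered collar near $\partial\Omega$ is not negligible --- $|w_j|^p$ can concentrate there and $\int f(x,w_j)$ over that collar can be uniformly strictly negative, so the lower bound over the covered part does not propagate to all of $\Omega$. Trying to repair this by shifting to $f+L(1+|z|^p)\geq0$ fails too, because $\int_\Omega|w_j|^p$ is only weakly lower semicontinuous, not continuous, so the cost of the shift does not cancel.

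What the paper does instead is a small but decisive extra step that your proposal is missing: extend $v_0,\,v_j,\,v$ by zero to a strictly larger open set $\Omega'\Supset\Omega$ and replace $f$ by $\bar f(x,z)=\chi_\Omega(x)f(x,z)$. Because the $v_j$ are smooth and compactly supported in $\Omega$, their zero extensions remain $\A$-free on $\Omega'$, and hence $\A(\bar v_0+\bar v_j)=\A\bar v_0$ is a \emph{fixed} element of $W^{-h,p}_{\locc}(\Omega')$, independent of $j$. This places the sequence $\{\bar v_0+\bar v_j\}$ squarely in the asymptotically $\A$-free setting of Theorem~5.6 in \cite{KriRai20} on $\Omega'$, where $\partial\Omega$ is now interior and the boundary-concentration and signedness issues disappear simultaneously. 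Note that this is not the reduction you propose: on $\Omega'$ the sequence is generally \emph{not} $\A$-free, its $\A$-image is merely $j$-independent, which is what the quoted theorem needs. To complete your proof along the lines you sketched, you would either need to reprove the interior lower semicontinuity for signed integrands with a Decomposition Lemma that localizes away from $\partial\Omega$, or perform exactly this extension-by-zero step to convert the boundary of $\Omega$ into an interior set.
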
 
\par The reason why we isolate this result as a separate lemma is to highlight that it does not follow from the main results of \cite{FonMul99}, which does not cover signed integrands explicitly. The lemma also does not follow by an immediate application of the lower semicontinuity results in \cite{GR22} either, as those do not cover effects of concentration at the boundary. The simple argument below shows that this is not an issue in problems with Dirichlet boundary conditions.
\begin{proof}
    Consider an open bounded set $\Omega'\Supset\Omega$. Extend $v_0,\{v_j\}$ and $v$ by $0$ to $\Omega^{\prime}\setminus \Omega$ and denote the extension by $\bar{v}_0, \{\bar{v}_j\}$ and $\bar{v}$, respectively. Also let $\bar f(x,z)=\chi_{\Omega}(x)f(x,z)$. It is easy to see that $\bar{v}_j \weakto \bar{v}$ in $L^p(\Omega^{\prime},V).$ 
    Note that $\A (\bar v_0+\bar{v}_j)=\A \bar v_0\in W^{-h,p}_{\locc}(\Omega',W)$, so Theorem~5.6 in \cite{KriRai20} applies in $\Omega'$, giving
  $$
    \liminf_{j\to\infty}\int_{\Omega'} \bar f(x,\bar v_0+\bar{v}_j) \dif x\geq \int_{\Omega'} \bar f(x,\bar v_0+\bar{v})\dif x.
    $$
    The claim is then immediate.
\end{proof}

\section{Existence of generalized minimizers}\label{sec:existence}
We begin by defining our notion of generalized minimizers, which requires $L^{p}_{0,\A}(\Omega)$ and $X^{\B,p}_0(\Omega)$, the two weakly closed $L^p$-based classes defined in Section~\ref{subsec:space}.

\begin{definition}\label{def:generalized_min}
    Let $v_0\in L^p(\Omega,V)$ be such that $\A v_0=0$ (resp. $u_0\in \mathscr D'(\Omega,U)$ be such that $v_0=\B u_0\in L^p(\Omega,V)$). Let $f\colon \Omega\times V\to \R$ be measurable. We say that $v\in v_0+L^p_{0,\A}(\Omega)$ (resp. $v\in \B u_0+ X^{\B,p}_0(\Omega)$) is a \textup{\textbf{generalized minimizer}} of Problem~\eqref{eq:P_Omega_Afree} (resp.~\eqref{eq:P_Omega}) if
    \begin{align*}
        \int_\Omega f(x,v(x))\dif x\leq \int_\Omega f(x,v_0(x)+\varphi(x))\dif x\quad\text{for all }\varphi\in L^p_{0,\A}(\Omega)\text{ (resp. $X^{\B,p}_0(\Omega)$)}.
    \end{align*}
\end{definition}
\subsection{$\A$-free setting}\label{subsec:A_exist}
We begin by proving that $\A$-quasiconvexity and mean coercivity imply existence of minimizers of Problem~\eqref{eq:P_Omega_Afree} by the direct method.
\begin{proposition}\label{prop:coerc_implies_existence_A-free}
     Suppose that $f\colon \Omega \times V\to \R$ is $\A$-quasiconvex (\textnormal{\ref{it:fqcstrong}} with $\ell=0$) and satisfies \textnormal{\ref{it:fgrowth}} for $1<p<\infty$. Assume in addition that there exists $C>0$ such that for any $v_0\in L^{p}(\Omega,V)$, $\varphi\in C_c^\infty(\Omega,V)$ with $\A v_0=0=\A\varphi$, we have that 
     \begin{align}\label{eq:1}
     \fint_{\Omega}f(x,v_0+\varphi)\dif x\geq C\left(\fint_\Omega |\varphi|^p-1-\fint_\Omega|v_0|^p\right).
     \end{align}
     Then Problem~\eqref{eq:P_Omega_Afree} admits a generalized minimizer in the sense of Definition~\ref{def:generalized_min}.
\end{proposition}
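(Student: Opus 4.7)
The plan is to apply the direct method of the calculus of variations, using the assumed coercivity for compactness of a minimizing sequence and Lemma~\ref{lem:lsc} for the required lower semicontinuity. The only bookkeeping required is to match the hypothesis of Lemma~\ref{lem:lsc}, which demands \emph{smooth} competitors, with our admissible class $L^p_{0,\A}(\Omega)$, which is by definition the strong $L^p$-closure of $C^{\infty}_{c,\A}(\Omega)$.

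First I would verify that the infimum
$$
m := \inf\left\{\mathcal E(v_0+\varphi,\Omega)\colon \varphi\in L^p_{0,\A}(\Omega)\right\}
$$
is finite: the growth bound \textup{\ref{it:fgrowth}} gives the upper bound $m\leq \mathcal E(v_0,\Omega)\leq L(\mathscr{L}^n(\Omega)+\|v_0\|_{L^p}^p)<\infty$, and the coercivity assumption \eqref{eq:1} (applied to $\varphi=0$ in $C^{\infty}_{c,\A}(\Omega)$, then extended to all of $L^p_{0,\A}(\Omega)$ by approximation, see below) yields $m\geq -C\mathscr{L}^n(\Omega)(1+\fint_\Omega|v_0|^p\dif x)>-\infty$. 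A standard consequence of \textup{\ref{it:fgrowth}} together with $\A$-quasiconvexity, which forces $f(x,\cdot)$ to be locally Lipschitz in $z$ with $p$-growth on the Lipschitz constant, is that the functional $\varphi\mapsto\mathcal E(v_0+\varphi,\Omega)$ is continuous for the strong $L^p$-topology on $L^p(\Omega,V)$, by Vitali's convergence theorem applied to an a.e.\ convergent subsequence. Since $C^{\infty}_{c,\A}(\Omega)$ is strongly $L^p$-dense in $L^p_{0,\A}(\Omega)$, this continuity allows me to select a minimizing sequence $\{\varphi_j\}\subset C^{\infty}_{c,\A}(\Omega)$ with $\mathcal E(v_0+\varphi_j,\Omega)\to m$.

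Applying the coercivity hypothesis \eqref{eq:1} directly to this smooth sequence yields a uniform $L^p$-bound on $\{\varphi_j\}$. Extracting a subsequence (not relabeled), I then have $\varphi_j\weakto\varphi$ in $L^p(\Omega,V)$. Being a closed linear subspace of $L^p(\Omega,V)$, the space $L^p_{0,\A}(\Omega)$ is weakly closed by Mazur's theorem, so $\varphi\in L^p_{0,\A}(\Omega)$. Lemma~\ref{lem:lsc} then applies \emph{verbatim} to the smooth sequence $\{\varphi_j\}$ to give
$$
\mathcal E(v_0+\varphi,\Omega)\leq\liminf_{j\to\infty}\mathcal E(v_0+\varphi_j,\Omega)=m.
$$
Hence $v:=v_0+\varphi\in v_0+L^p_{0,\A}(\Omega)$ attains the infimum and is a generalized minimizer in the sense of Definition~\ref{def:generalized_min}.

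There is no genuine obstacle here: the coercivity hypothesis \eqref{eq:1} and the lower semicontinuity in Lemma~\ref{lem:lsc} are precisely calibrated for the direct method. The only points deserving a moment's care are the weak closedness of $L^p_{0,\A}(\Omega)$ in $L^p(\Omega,V)$ (via Mazur) and the ability to replace an arbitrary minimizing sequence by a smooth one (via density in $L^p$ combined with strong $L^p$-continuity of the energy), both of which are standard.
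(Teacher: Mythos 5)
Your proof is correct and follows essentially the same route as the paper: the direct method, using the coercivity hypothesis~\eqref{eq:1} for $L^p$-bounds, Mazur's lemma for weak closedness of $L^p_{0,\A}(\Omega)$, and Lemma~\ref{lem:lsc} for lower semicontinuity. The only (cosmetic) difference is that you first pass to a minimizing sequence in $C^\infty_{c,\A}(\Omega)$ via density and the strong $L^p$-continuity of the energy, so that Lemma~\ref{lem:lsc} applies verbatim; the paper instead takes a minimizing sequence directly in $L^p_{0,\A}(\Omega)$ and uses the density/continuity argument at the end to establish the additional relaxation identity $\inf_{C^\infty_{c,\A}}=\min_{L^p_{0,\A}}$, which is not strictly required for the Proposition as stated.
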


\begin{proof} 
Let $v_0\in L^{p}(\Omega,V)$ be $\A$-free. We claim that 
$$
\inf\left\{\int_{\Omega}f(x,v_0+v)\colon v\in C_{c,\A}^\infty(\Omega)\right\}=\min \left\{\int_{\Omega}f(x,v_0+v)\colon v\in L_{0,\A}^{p}(\Omega)\right\}.
$$
First, it is obvious that the inequality ``$\geq$'' holds. Second, we show that the minimization problem on the right has a solution in $L_{0,\A}^{p}(\Omega)$. To see this note that by \eqref{eq:1}, we can find a minimizing sequence $\{v_j\}\subset L_{0,\A}^{p}(\Omega)$ which is bounded in $L^p(\Omega)$ by the assumed coercivity condition \eqref{eq:1}. It follows by the $\A$-quasiconvexity of $f$ and Lemma \ref{lem:lsc} that, up to a subsequence of $\{v_j\}$, 
$$
\liminf_{j\to\infty} \int_{\Omega}f(x,v_0+v_j) \dif x\geq \int_{\Omega}f(x,v_0+\tilde v)\dif x,
$$
where $\tilde v$ is a weak limit point of $\{v_j\}$ and thus  $\tilde v\in L_{0,\A}^{p}(\Omega)$ by Mazur's Lemma. The map $\tilde v$ is thus a minimizer of the problem on the right. 

Let now $\phi_j\in C_{c,\A}^\infty(\Omega)$ be such that $\phi_j\to \tilde v $ in $L^p(\Omega,V)$. The Lipschitz continuity of $f$ in \eqref{eq:fLip}  implies
$$
\int_\Omega f(x,v_0+ \phi_j)\dif x\to\int_\Omega f(x,v_0+ \tilde v)\dif x,
$$
which shows that $\{\phi_j\}$ is a minimizing sequence for the problem on the left, so the claimed equality holds true.
\end{proof}
\begin{proof}[Proof of Theorem~\ref{thm:existence_autonomous}, $\A$-free setting]\phantom{a}\\
    Follows immediately from Proposition~\ref{prop:coerc_implies_existence_A-free} and Theorem~\ref{thm:Acoercivity}.
\end{proof}
We proceed to show that any minimizer $v_0+\tilde v$ of \eqref{eq:P_Omega_Afree} has a far better structure. In particular, the following will be deduced in Proposition~\ref{prop:reduction_A-free}: as far as problem $\eqref{eq:P_Omega_Afree}$ is concerned, we can assume without loss of generality that $\Omega$ is smooth and that the minimizers have the form $v=\phi+\B u$, where $\phi\in C^{\infty}(\bar\Omega,V)$ is $\A$-free and $u \in W^{k,p}(\Omega,U)$.

\begin{proposition}\label{prop:reduction_A-free}
    Let $v_0+\tilde v$ be a minimizer of \eqref{eq:P_Omega_Afree}, as described above, and let $\omega\Subset\Omega$. We have that
       \begin{align*}
            \inf \{\mathcal E(v,\omega)\colon v\in  v_0+\tilde v+C_{c,\A}^\infty(\omega)\}=\mathcal E(v_0+\tilde v,\omega),
        \end{align*} i.e. that $v_0+\tilde{v}$ is a local minimizer of \eqref{eq:P_Omega_Afree}. Moreover, it is possible to write $v_0+\tilde v= \B\tilde u+\phi$ in $\omega$, where
        \begin{enumerate}
            \item $\tilde u\in {\dot{W}}{^{k,p}}(\R^n, U)$ satisfies $\C^{\ast}\tilde u=0$ in $\R^n$,
            \item $\phi\in L^p(\R^n,V)$ satisfies $\B^{\ast} \phi=0$ in $\R^n$ and $\A \phi=0$ in $\bar\omega$.
        \end{enumerate}
        In particular, $\phi\in C^\infty(\bar\omega,V)$.
\end{proposition}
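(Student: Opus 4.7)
The plan is to prove the statement in three steps: (i) the local minimality on $\omega$, (ii) the explicit decomposition $v_0+\tilde v=\B\tilde u+\phi$ with the stated algebraic properties, and (iii) the smoothness of $\phi$ on $\bar\omega$.

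For (i), I would pick any $\varphi\in C_{c,\A}^\infty(\omega,V)$ and extend it by zero to obtain $\tilde\varphi\in C_{c,\A}^\infty(\Omega,V)\subset L^p_{0,\A}(\Omega)$. Since $v_0+\tilde v$ is a generalized minimizer on $\Omega$ (Definition~\ref{def:generalized_min}), we have $\mathcal E(v_0+\tilde v,\Omega)\leq \mathcal E(v_0+\tilde v+\tilde\varphi,\Omega)$, and because the two integrands coincide outside $\omega$ this reduces to the corresponding inequality on $\omega$, which is exactly the claimed local minimality.

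For (ii), let $\bar v\in L^p(\R^n,V)$ denote the zero-extension of $v_0+\tilde v$. Applying Theorem~\ref{thm:full_space_hodge}\ref{it:hodge_ABC} to $\bar v$ produces a decomposition $\bar v=\B\tilde u+\A^* w$ with $\tilde u\in \dot W^{k,p}(\R^n,U)$ satisfying $\C^*\tilde u=0$ in $\R^n$ and $w\in \dot W^{h,p}(\R^n,W)$. I then set $\phi:=\A^* w\in L^p(\R^n,V)$. The exactness $\rmim \B(\xi)=\ker\A(\xi)$ for $\xi\neq 0$ amounts to $\A(\xi)\B(\xi)\equiv 0$, hence $\A\B\equiv 0$ as a differential operator, and dually $\B^*\A^*\equiv 0$; this gives $\B^*\phi=0$ in $\R^n$. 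Since $v_0$ is $\A$-free by hypothesis and $\tilde v\in L^p_{0,\A}(\Omega)$ is $\A$-free by construction, $\A(v_0+\tilde v)=0$ in $\Omega$, so $\A\phi=\A(v_0+\tilde v)-\A\B\tilde u=0$ in $\Omega$, and in particular on an open set $\omega'$ with $\omega\Subset\omega'\Subset\Omega$ containing $\bar\omega$.

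For (iii), the key observation is that the overdetermined constant-coefficient system $\{\A\phi=0,\,\B^*\phi=0\}$ is elliptic. Indeed, if $\xi\in\R^n\setminus\{0\}$ and $z\in V$ satisfy $\A(\xi)z=0=\B^*(\xi)z$, then $z\in\ker\A(\xi)=\rmim \B(\xi)$ yields $z=\B(\xi)u$ for some $u\in U$, whence
\[
|\B(\xi)u|^2=\langle \B^*(\xi)\B(\xi)u,u\rangle=0,
\]
which forces $z=\B(\xi)u=0$. Standard interior regularity for constant-coefficient overdetermined elliptic systems then yields $\phi\in C^\infty(\omega')\subset C^\infty(\bar\omega)$. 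The principal technical point is the clean verification of ellipticity of the mixed-order system $(\A,\B^*)$ from the constant rank exactness; once this is in hand, the remaining assertions follow directly by combining the full-space Hodge decomposition of Theorem~\ref{thm:full_space_hodge} with the $\A$-freeness of $v_0+\tilde v$ on $\Omega$ and the vanishing relations $\A\B=0$ and $\B^*\A^*=0$.
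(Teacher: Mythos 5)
Your argument is correct and matches the paper's proof in all essentials: local minimality by zero-extending test functions; the full-space Hodge decomposition of Theorem~\ref{thm:full_space_hodge}\ref{it:hodge_ABC} to produce $\tilde u$ with $\C^*\tilde u=0$ and $\phi=\A^*w$ with $\B^*\phi=0$; and ellipticity of $(\A,\B^*)$ from $\ker\A(\xi)=\rmim\B(\xi)$ and $\ker\B^*(\xi)=(\rmim\B(\xi))^\perp$. The one small difference is that you apply the Hodge decomposition to the zero-extension of $v_0+\tilde v$, whereas the paper multiplies by a smooth cutoff $\rho\in C_c^\infty(\Omega)$ with $\rho\equiv 1$ near $\bar\omega$ before decomposing. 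Both devices produce an $L^p(\R^n)$ function to feed into the Hodge decomposition; your version yields $\A\phi=0$ on all of $\Omega$ rather than just near $\bar\omega$, which is slightly stronger but immaterial for the stated conclusion. The cutoff variant has the mild advantage of producing a compactly supported function and avoids any worry about the jump at $\partial\Omega$; since the decomposition and the subsequent elliptic regularity argument are purely $L^p$-based and local in the interior, the zero extension works equally well here.
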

\begin{proof}
    \par The first conclusion is easy to check by testing with functions in $C_{c,\A}^{\infty}(\omega)$.
   \par To show the rest, take $\rho\in C_c^\infty(\Omega)$ be equal to $1$ in an open neighborhood of $\bar \omega$ and perform the Hodge decomposition of $\rho(v_0+\tilde{v})$ as in Theorem \ref{thm:full_space_hodge}\ref{it:hodge_ABC}: 
   \[\rho(v_0+\tilde{v})=\B\tilde u+\phi,\] 
   where $\tilde{u} \in \dot{W}^{k, p}(\R^n,U)$  with $\C^*\tilde u=0$ in $\R^n$ and $\phi = \A^{\ast}\psi$ for some $\psi \in \dot{W}^{h, p}(\R^n,W)$. Therefore $\B^*\phi=\B^*\A^*\psi=0$ in $\R^n$.
   Since 
        \[ \A (v_0+\tilde{v}) = 0  \mbox{ in }\Omega \quad \mbox{and}\quad \A \B \tilde{u}=0\text{ in }\R^n,\] 
    we obtain $\A \phi =0$ in $\bar{\omega}$.

   The assertion concerning the smoothness (in fact, analyticity) of $\phi$ follows from the fact that the system 
   \[\A \phi=0,\quad \B^{\ast}\phi=0\] is elliptic. 
   We verify the ellipticity explicitly here: let $\xi\neq 0$, so that  $\ker \A(\xi)\cap\ker\B^{\ast}(\xi)=\rmim \B(\xi)\cap(\rmim\B(\xi))^\bot=\{0\}$.
\end{proof}
\begin{remo}\label{rmk:reduction}
In particular, we obtain that $\tilde u\in W^{k,p}(\omega,U)$ is $\C^{\ast}$-free and that
\begin{align*}
     \inf \left\{ \int_{\omega} f(x,\B u+\phi)\colon u\in \tilde u+C_c^\infty(\omega,U)\right\}= \int_{\omega}f(x,\B \tilde u+\phi),
\end{align*}
where $\phi\in C^\infty(\bar\omega,V)$ is $\A$-free and $\B^{\ast}$-free in $\bar\omega$. This shows that the regularity claim of Theorem~\ref{thm:main_A-free} can be inferred from the regularity claim of Theorem~\ref{thm:main} by considering the integrand $\tilde f(x,z)=f(x,z+\phi(x))$ for $x\in\omega$ and $z\in V$.
\end{remo}

\subsection{$\B$-gradient setting}\label{susbsec:B_exist}
In this subsection, we first show the existence of a generalized minimizer for \eqref{eq:P_Omega} under quasiconvexity and coercivity assumptions.
\begin{proposition}\label{prop:coerc_implies_existence_B}
     Suppose that $f\colon \Omega \times V\to \R$ is quasiconvex for $\B$-gradients (\textnormal{\ref{it:f_B-qcstrong}}$'$ with $\ell=0$) and satisfies \textnormal{\ref{it:fgrowth}} for $1<p<\infty$. Assume in addition that for any $u_0\in \mathscr D'(\Omega,U)$  such that $\B u_0\in L^p(\Omega,V)$ and $ \phi\in C_c^\infty(\Omega,U)$, we have that 
     \begin{align*}
     \fint_{\Omega}f(x,\B(u_0+\phi))\geq C\left(\fint_\Omega |\B\phi|^p-1-\fint_\Omega|\B u_0|^p\right).
     \end{align*}
     Then Problem~\eqref{eq:P_Omega} admits a generalized minimizer in the sense of Definition~\ref{def:generalized_min}.
\end{proposition}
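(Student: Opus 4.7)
The plan is to carry out the direct method exactly in the style of Proposition~\ref{prop:coerc_implies_existence_A-free}, working in the weakly closed class $\B u_0 + X^{\B,p}_0(\Omega) \subset L^p(\Omega, V)$. The preliminary step is to bring $\B$-gradient quasiconvexity into a form compatible with Lemma~\ref{lem:lsc}. By Corollary~\ref{cor:ABC} there exists a constant rank operator $\A$ with $\rmim\B(\xi) = \ker\A(\xi)$ for every $\xi \neq 0$. Under this exactness relation, $\B$-gradient quasiconvexity is equivalent to $\A$-quasiconvexity (the standard Fonseca--M\"uller equivalence, obtained by Fourier-analytic decomposition of $\A$-free test fields on balls, together with a cutoff argument under $p$-growth to convert them into $\B$-gradients of compactly supported potentials), so \ref{it:f_B-qcstrong}$'$ with $\ell=0$ implies that $f$ is $\A$-quasiconvex in the sense required by Lemma~\ref{lem:lsc}.

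Pick a minimizing sequence $\phi_j \in C_c^\infty(\Omega, U)$. The coercivity hypothesis gives $\|\B\phi_j\|_{L^p(\Omega)} \leq C$, so, passing to a subsequence, $\B\phi_j \rightharpoonup w$ in $L^p(\Omega, V)$. Since $X^{\B,p}_0(\Omega)$ is the strong $L^p$-closure of the linear subspace $\B C_c^\infty(\Omega, U)$, it is norm-closed and hence weakly closed, so $w \in X^{\B,p}_0(\Omega)$. Moreover $\B\phi_j \in C_{c,\A}^\infty(\Omega)$, so Lemma~\ref{lem:lsc} applied with $v_0 = \B u_0$ and $v_j = \B\phi_j$ yields
$$
\liminf_{j\to\infty}\int_\Omega f(x, \B u_0 + \B\phi_j) \,\dif x \;\geq\; \int_\Omega f(x, \B u_0 + w)\,\dif x,
$$
showing that the infimum of $\mathcal F$ over the class $\B u_0 + X^{\B,p}_0(\Omega)$ is attained at $\B u_0 + w$.

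To match this with Definition~\ref{def:generalized_min}, note that \eqref{eq:fLip} together with the mean value inequality makes $\varphi \mapsto \int_\Omega f(x, \B u_0 + \varphi)\,\dif x$ continuous in the strong $L^p$-topology on $\varphi$. Since $\B C_c^\infty(\Omega, U)$ is dense in $X^{\B,p}_0(\Omega)$ by definition, the infimum over $u_0 + C_c^\infty(\Omega, U)$ coincides with the infimum over $\B u_0 + X^{\B,p}_0(\Omega)$, and the latter is achieved at $\B u_0 + w$. The only genuinely non-routine step is the equivalence of $\B$-gradient and $\A$-quasiconvexity invoked in the first paragraph, since it depends on both the exactness relation from Corollary~\ref{cor:ABC} and the standard potential-recovery argument on $\A$-free compactly supported test fields; all other steps are formal transcriptions of the proof of Proposition~\ref{prop:coerc_implies_existence_A-free}.
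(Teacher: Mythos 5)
Your argument is correct and is essentially the same direct-method proof that the paper intends, which the authors describe only as ``analogous to Proposition~\ref{prop:coerc_implies_existence_A-free}'' and do not spell out. The one step you flag as non-routine---that the $\B$-gradient quasiconvexity of \textnormal{\ref{it:f_B-qcstrong}}$'$ yields the $\A$-quasiconvexity needed to invoke Lemma~\ref{lem:lsc}, where $\A$ is a constant-rank annihilator of $\B$, via the Fonseca--M\"uller equivalence---is indeed required for applying Lemma~\ref{lem:lsc} to $v_j=\B\phi_j$ and is left implicit in the paper's one-line proof.
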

The proof is analogous to that of Proposition~\ref{prop:coerc_implies_existence_A-free}, establishing that 
$$
\inf\left\{\int_{\Omega}f(x,\B (u_0+u))\colon u\in C_c^\infty(\Omega,U)\right\}=\min \left\{\int_{\Omega}f(x,\B u_0+v)\colon v\in X_0^{\B,p}(\Omega)\right\}.
$$
We do not repeat the argument here. 
\begin{proof}[Proof of Theorem~\ref{thm:existence_autonomous}, $\B$-gradient setting]\phantom{a}\\
        Follows immediately from Proposition~\ref{prop:coerc_implies_existence_B} and Remark~\ref{rmk:Bcoercivity}.
\end{proof}
We will show in addition that any minimizer $\B u_0+\tilde v$ has a better structure. In Proposition~\ref{prop:reduction} we will  deduce that, as far as problem \eqref{eq:P_Omega} is concerned, we can assume without loss of generality that $\Omega$ is smooth and $u_0\in W^{k,p}(\Omega,U)$, reducing our setup of Theorem \ref{thm:main} to the one in \cite{Dacorogna_JEMS}.

We begin by choosing a good representative $\tilde u$ for $\tilde v=\B\tilde u$:
\begin{proposition}\label{prop:existence}
Let $1<p<\infty$ and $f$ satisfy the assumptions of Proposition~\ref{prop:coerc_implies_existence_B}. For any $u_0\in \mathscr D'(\Omega,U)$ with $\B u_0\in L^p(\Omega,V)$, there exists $\tilde u_0\in \mathscr D'(\R^n,U)$ such that
    \begin{enumerate}
        \item $\B \tilde u_0$ is a generalized minimizer of \eqref{eq:P_Omega},
        \item $\tilde u_0-u_0\in W^{k,p}(\Omega,U)$,
        \item $\C^{\ast}(\tilde u_0-u_0)=0$.
    \end{enumerate}
\end{proposition}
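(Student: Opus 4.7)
The plan is to apply the full-space Hodge decomposition Theorem~\ref{thm:full_space_hodge}\ref{it:hodge_BC} at the level of smooth approximations and pass to the limit. By Proposition~\ref{prop:coerc_implies_existence_B}, a generalized minimizer of Problem~\eqref{eq:P_Omega} exists and has the form $\B u_0+\tilde v$ for some $\tilde v\in X^{\B,p}_0(\Omega)$, so we can fix $\phi_j\in C_c^\infty(\Omega,U)$ with $\B\phi_j\to\tilde v$ in $L^p(\Omega,V)$. Extending $\phi_j$ by zero to $\R^n$ produces $\B\phi_j\to\bar v$ in $L^p(\R^n,V)$, where $\bar v$ is the zero-extension of $\tilde v$; in particular $\{\B\phi_j\}$ is Cauchy in $L^p(\R^n,V)$.

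Next, Theorem~\ref{thm:full_space_hodge}\ref{it:hodge_BC} produces $\tilde\phi_j\in\dot{W}{^{k,p}}(\R^n,U)$ with $\C^{\ast}\tilde\phi_j=0$ and $\B\tilde\phi_j=\B\phi_j$ in $\R^n$, together with the estimate $\|D^k\tilde\phi_j\|_{L^p(\R^n)}\le C\|\B\phi_j\|_{L^p(\R^n)}$. Applied to differences $\phi_j-\phi_l$, this estimate shows that $\{\tilde\phi_j\}$ is Cauchy in $\dot{W}{^{k,p}}(\R^n,U)$, hence convergent to some $\tilde\phi$ therein. Continuity of the $k$-th order operator $\B$ from $\dot{W}{^{k,p}}$ to $L^p$ gives $\B\tilde\phi=\bar v$ in $\R^n$, and passing to distributional limits in $\C^{\ast}\tilde\phi_j=0$ yields $\C^{\ast}\tilde\phi=0$ in $\R^n$.

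To pick a genuine distributional representative, I would subtract from $\tilde\phi$ a polynomial $P$ of degree less than $k$ chosen so that $\tilde\phi-P\in W^{k,p}_{\locc}(\R^n,U)$. Since Theorem~\ref{thm:BC} allows us to take the order $l$ of $\C$ strictly greater than $k$, we have $\C^{\ast}P=0$, so the modification preserves $\C^{\ast}(\tilde\phi-P)=0$. Define $\tilde u_0\in\mathscr D'(\R^n,U)$ by setting $\tilde u_0|_\Omega=u_0+(\tilde\phi-P)|_\Omega$ and extending it across $\partial\Omega$ by $\tilde\phi-P$ (combined with any distributional extension of $u_0$ past $\partial\Omega$, which is immaterial for the stated conclusions). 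Then property (2) follows from $\tilde u_0-u_0=\tilde\phi-P\in W^{k,p}(\Omega,U)$; property (3) from $\C^{\ast}(\tilde u_0-u_0)=\C^{\ast}(\tilde\phi-P)=0$ in $\Omega$; and property (1) from $\B\tilde u_0=\B u_0+\B\tilde\phi=\B u_0+\tilde v$ in $\Omega$.

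The main obstacle is that $\dot{W}{^{k,p}}(\R^n)$-limits are naturally elements of a quotient space modulo polynomials of degree less than $k$, so committing to an actual distribution on $\R^n$ requires an ad hoc polynomial choice. The key observation that makes this choice compatible with property~(3) is the freedom in Theorem~\ref{thm:BC} to enlarge the order $l$ of $\C$ beyond $k$, which guarantees that $\C^{\ast}$ annihilates the polynomial ambiguity in $\dot{W}{^{k,p}}(\R^n)$.
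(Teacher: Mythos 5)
Your proof is correct and rests on the same key tool as the paper's, namely the full-space Hodge decomposition Theorem~\ref{thm:full_space_hodge}\ref{it:hodge_BC}, but the route is slightly different and worth comparing. The paper starts from a minimizing sequence $u_0+\varphi_j$, applies the Hodge projection to $\varphi_j$, and then uses weak compactness of $\{\tilde\varphi_j\}$ in $W^{k,p}(\Omega,U)$ together with the lower semicontinuity Lemma~\ref{lem:lsc} to produce $\pi$ directly, in effect re-running the direct method of Proposition~\ref{prop:coerc_implies_existence_B}. You instead invoke Proposition~\ref{prop:coerc_implies_existence_B} as a black box to get the optimal correction $\tilde v\in X^{\B,p}_0(\Omega)$, approximate it strongly by $\B\phi_j$, and show that the projected sequence $\{\tilde\phi_j\}$ is Cauchy in $\dot W^{k,p}(\R^n,U)$. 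One small point to make explicit: the Cauchy step silently uses the linearity of $\varphi\mapsto\tilde\varphi$ (so that $\tilde\phi_j-\tilde\phi_l$ is the projection of $\phi_j-\phi_l$); this does hold because the decomposition is realized as a $0$-homogeneous Fourier multiplier, but it deserves a sentence. What your approach buys is a cleaner argument that avoids re-proving lower semicontinuity, and more honest bookkeeping of the polynomial ambiguity in $\dot W^{k,p}(\R^n)$ — the paper asserts $W^{k,p}(\Omega)$-boundedness of $\tilde\varphi_j$ from the top-order estimate alone, which implicitly requires the same normalization you carry out. Your observation that $l>k$ makes $\C^{\ast}$ annihilate the polynomial ambiguity is exactly the right justification and is implicit in the paper as well.
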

\begin{proof}
 Let $u_0+\varphi_j$ with $\varphi_j\in C_c^\infty(\Omega,U)$ be a minimizing sequence. By the mean coercivity assumption we have the $L^p$-boundedness of $\{\B \varphi_j\}$. Next, project $\varphi_j$ to the kernel of $\C^{\ast}$  as in Theorem~\ref{thm:full_space_hodge}\ref{it:hodge_BC} to get 
$$
\varphi_j=\tilde \varphi_j+\psi_j,
$$
where $\tilde{\varphi}_j \in \dot{W}^{k,p}(\R^n)$ and $\C^{\ast}\tilde \varphi_j=0$, $\B\tilde \varphi_j=\B\varphi_j$ in ${\mathbb R^n}$. Moreover, we have 
    \[\normm{D^k \tilde{\varphi}_j}_{L^p(\R^n)} \leq C \normm{\B \varphi_j}_{L^p(\R^n)}.\] 
Since $\Omega \subset \R^n$ is bounded, the Sobolev estimate for $\tilde\varphi_j$ implies the boundedness of $\{\tilde\varphi_j\}$ in $W^{k,p}(\Omega,U)$. Then there exists $\pi \in W^{k,p}(\Omega,U)$ with $\C^{\ast}\pi=0$ such that, up to a subsequence,
$$
\tilde \varphi_j\rightharpoonup \pi \text{ in }W^{k,p}(\Omega,U).
$$
Write $\tilde u_0=u_0+\pi$, and it is easy to see that $\B (u_0+\varphi_j) = \B(u_0+\tilde{\varphi}_j)\rightharpoonup \B \tilde u_0$ in $L^p(\Omega,V)$. By Lemma \ref{lem:lsc}
$$
\liminf_{j\to\infty}\int_\Omega f(x,\B(u_0+\tilde\varphi_j))\geq \int_\Omega f(x,\B\tilde u_0).
$$
Notice that the left-hand side converges to the infimum of \eqref{eq:P_Omega} since $\B\tilde \varphi_j=\B\varphi_j$, and $\B\tilde{u}_0 \in \B u_0 +X_0^{\B,p}(\Omega)$, which indicates \[ \int_\Omega f(x,\B\tilde u_0)\dif x \geq \liminf_{j\to\infty}\int_\Omega f(x,\B(u_0+\tilde\varphi_j)) \dif x, \] and thus the equality holds. The claim follows.
\end{proof}

\begin{lemma}\label{lem:smooth_domains}
    We follow the notation in Proposition~\ref{prop:existence}. Let $\omega\Subset\Omega$, and then there holds 
        \begin{align*}
            \inf \{\F(u,\omega)\colon u\in \tilde u_0+C_c^\infty(\omega,U)\}=\F(\tilde u_0,\omega).
        \end{align*} In other words, $\tilde{u}_0$ is a local minimizer of \eqref{eq:P_Omega}.
\end{lemma}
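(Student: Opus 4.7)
The plan is a standard localization-via-contradiction argument, exploiting that the global minimality granted by Proposition~\ref{prop:existence} is robust under compactly supported perturbations. Suppose toward contradiction that the claimed infimum is strictly less than $\mathcal F(\tilde u_0, \omega)$; then I would pick $\phi\in C_c^\infty(\omega, U)$ with $\mathcal F(\tilde u_0+\phi,\omega) < \mathcal F(\tilde u_0,\omega)$. Since $\omega\Subset\Omega$, extending $\phi$ by zero gives $\phi\in C_c^\infty(\Omega,U)$, and $\B\phi$ is supported in $\overline\omega\Subset\Omega$.

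The key verification is that $\B(\tilde u_0+\phi)$ is admissible for the \emph{global} problem \eqref{eq:P_Omega}, i.e. that $\B(\tilde u_0+\phi)-\B u_0\in X_0^{\B,p}(\Omega)$. By Proposition~\ref{prop:existence}, $\B\tilde u_0-\B u_0=\B\pi$ where $\pi$ is a weak $W^{k,p}$-limit of the sequence $\tilde\varphi_j$; since $\B\tilde\varphi_j=\B\varphi_j\in \B C_c^\infty(\Omega,U)\subset X_0^{\B,p}(\Omega)$ and the latter is a strongly closed linear subspace of $L^p(\Omega,V)$ (hence weakly closed by Mazur's lemma), $\B\pi\in X_0^{\B,p}(\Omega)$. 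Trivially $\B\phi\in X_0^{\B,p}(\Omega)$, so by linearity the admissibility follows.

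Now I would invoke the global minimality of $\B\tilde u_0$ to get $\mathcal F(\tilde u_0,\Omega)\leq \mathcal F(\tilde u_0+\phi,\Omega)$. Because $\supp\phi\subset\omega$, we have $\B\phi\equiv 0$ on $\Omega\setminus\omega$, so the integrals on $\Omega\setminus\omega$ coincide:
$$
\mathcal F(\tilde u_0,\omega)+\mathcal F(\tilde u_0,\Omega\setminus\omega) \;\leq\; \mathcal F(\tilde u_0+\phi,\omega)+\mathcal F(\tilde u_0,\Omega\setminus\omega).
$$
The shared term $\mathcal F(\tilde u_0,\Omega\setminus\omega)$ is finite by the $p$-growth bound \ref{it:fgrowth} together with $\B\tilde u_0\in L^p(\Omega,V)$, so I can cancel it and obtain $\mathcal F(\tilde u_0,\omega)\leq \mathcal F(\tilde u_0+\phi,\omega)$, contradicting the choice of $\phi$.

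I do not anticipate any real obstacle in this argument: the only mildly subtle point is ensuring $\B\pi\in X_0^{\B,p}(\Omega)$ so that the global perturbation by $\B\phi$ stays admissible, and this is handled cleanly by the weak closure of $X_0^{\B,p}(\Omega)$. Everything else is the standard cut-and-paste that underpins any local minimality claim derived from global minimality.
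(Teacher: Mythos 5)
Your argument is correct and is essentially the paper's proof, merely cast as a contradiction: both rest on noting that for $\phi\in C_c^\infty(\omega,U)\subset C_c^\infty(\Omega,U)$ the perturbed field $\B(\tilde u_0+\phi)$ remains admissible for the global problem, applying global minimality of $\B\tilde u_0$ on $\Omega$, and cancelling the common (finite, by \ref{it:fgrowth}) contribution over $\Omega\setminus\omega$. The only thing you add is to spell out that $\B\pi\in X_0^{\B,p}(\Omega)$ via weak closedness of that subspace, which the paper leaves implicit but is indeed the point being used.
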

In particular, to prove Theorem~\ref{thm:main} it suffices to consider functionals defined on smooth sets.

    \begin{proof}[Proof of Lemma~\ref{lem:smooth_domains}]
        Fix an arbitrary $\varphi \in C_c^{\infty}(\omega,{U})$. Since $u_0+\pi$ is a generalized minimizer, and $\B(u_0+\pi+\varphi) \in \B u_0 + X_0^{\B,p}(\Omega)$, we have \[ \F(u_0+\pi,\Omega) \leq \F(u_0+\pi+\varphi,\Omega), \] which implies \[ \int_{\omega}f(x,\B(u_0+\pi))\dif x \leq \int_{\omega}f(x,\B(u_0+\pi+\varphi))\dif x \] as desired.
    \end{proof}
    Finally, we show that on any subdomain $\omega\Subset\Omega$, we can replace the minimizer with a minimizer in the right Sobolev class, provided that we assume integrability of the lower order derivatives of $u_0$.
    \begin{proposition}\label{prop:reduction}
        We use the notation of Proposition \ref{prop:existence} and denote $u_0+\pi$ by $\tilde{u}_0$. Assume that $u_0\in W^{k-1,p}(\Omega,U)$. For any $\omega\Subset \Omega$, there exists $u_\omega\in W^{k,p}(\Omega,U)$ such that
        \begin{enumerate}
            \item $\B u_\omega=\B \tilde u_0$ in $\omega$,
            \item $\C^{\ast}u_{\omega}=0$ in $\Omega$,
            \item $\|u_{\omega}\|_{W^{k,p}(\Omega)}\leq c\|\B u_0\|_{L^{p}(\Omega)}+\|u_0\|_{W^{k-1,p}(\Omega)}$,
            \item $u_\omega$ is a minimizer of the problem
            \begin{align*}
                \inf \{\F(u,\omega)\colon u\in  u_\omega+C_c^\infty(\omega,U)\}=\F( u_\omega,\omega).
            \end{align*}
        \end{enumerate}
    \end{proposition}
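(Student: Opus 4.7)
The plan is to combine a smooth cutoff with a full-space Hodge-type projection. First, since $u_0 \in W^{k-1,p}(\Omega, U)$ by assumption and $\pi \in W^{k,p}(\Omega, U)$ from Proposition~\ref{prop:existence}, we have $\tilde u_0 = u_0 + \pi \in W^{k-1,p}(\Omega, U)$ with $\B \tilde u_0 \in L^p(\Omega, V)$. Choose a cutoff $\rho \in C_c^\infty(\Omega)$ with $\rho \equiv 1$ on an open neighborhood of $\bar\omega$, and set $\varphi_0 := \rho \tilde u_0$, extended by zero to $\R^n$. By the Leibniz rule, $\B(\rho \tilde u_0) = \rho\, \B \tilde u_0 + [\B, \rho]\tilde u_0$, where the commutator involves only derivatives of $\tilde u_0$ of order at most $k-1$. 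Therefore $\varphi_0 \in W^{k-1,p}(\R^n, U)$ has compact support, and $\B \varphi_0 \in L^p(\R^n, V)$ also has compact support.

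Next, I would apply the full-space Hodge decomposition from Theorem~\ref{thm:full_space_hodge}\ref{it:hodge_BC}, extended by density (via the Mikhlin--H\"ormander theorem for the $0$-homogeneous projector onto $(\ker\B(\xi))^\perp=\rmim\B^*(\xi)$) from $C_c^\infty(\R^n,U)$ to the wider class $\{\varphi\in W^{k-1,p}(\R^n,U):\B\varphi\in L^p(\R^n,V)\}$. This yields a decomposition $\varphi_0 = u_\omega + \psi$ with $\C^* u_\omega = 0$ on $\R^n$ and $\B u_\omega = \B \varphi_0$ on $\R^n$, together with $\|D^k u_\omega\|_{L^p(\R^n)} \leq C\|\B\varphi_0\|_{L^p(\R^n)}$. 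Item (2) of the proposition is then immediate, and since $\rho \equiv 1$ near $\bar\omega$, we obtain $\B u_\omega = \B \tilde u_0$ on $\omega$, giving (1). For the Sobolev bound (3), the top-order estimate is controlled via Leibniz by $\|\B \tilde u_0\|_{L^p(\Omega)} + \|\tilde u_0\|_{W^{k-1,p}(\Omega)}$, while the lower-order derivatives of $u_\omega$ on the bounded set $\Omega$ are controlled by $L^p$-boundedness of the projector applied to $\varphi_0$ and its derivatives of order $\leq k-1$. Finally, using $\tilde u_0 = u_0 + \pi$ together with the a priori bound $\|\pi\|_{W^{k,p}(\Omega)} \leq C(1 + \|\B u_0\|_{L^p(\Omega)})$ coming from the coercivity argument in Proposition~\ref{prop:existence} combined with the Sobolev estimate for $\C^*$-free functions, the claimed bound (3) follows.

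For property (4), fix an arbitrary $\phi \in C_c^\infty(\omega, U)$. Since $\B u_\omega = \B \tilde u_0$ in $\omega$ and $\B\phi$ is supported in $\omega$, we have
\begin{equation*}
\int_\omega f(x, \B(u_\omega + \phi))\dif x = \int_\omega f(x, \B \tilde u_0 + \B\phi)\dif x = \int_\omega f(x, \B(\tilde u_0 + \phi))\dif x,
\end{equation*}
and Lemma~\ref{lem:smooth_domains} bounds this last integral from below by $\int_\omega f(x, \B \tilde u_0)\dif x = \mathcal F(u_\omega, \omega)$. The main obstacle in the argument above is the density extension of Theorem~\ref{thm:full_space_hodge}\ref{it:hodge_BC} at the stated regularity and the careful bookkeeping of the Leibniz commutators and Mikhlin-type multiplier estimates for each order of derivative to produce the sharp constants in (3); the verifications of (1), (2), and (4) themselves are algebraic consequences of the construction.
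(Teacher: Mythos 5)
Your proof is correct and follows essentially the same strategy as the paper's: a smooth cutoff $\rho$ followed by the full-space Hodge decomposition of Theorem~\ref{thm:full_space_hodge}\ref{it:hodge_BC} applied to $\rho\tilde u_0$, with the local minimality (4) read off from Lemma~\ref{lem:smooth_domains} since $\B u_\omega=\B\tilde u_0$ on $\omega$. You make explicit two points the paper leaves implicit — the density extension of Theorem~\ref{thm:full_space_hodge}\ref{it:hodge_BC} via the $0$-homogeneous Mikhlin multiplier, and the control of the lower-order derivatives of $u_\omega$ by applying the projector to $\partial^\alpha\varphi_0$ for $|\alpha|\le k-1$ — but these are exactly the steps the paper's "product rule" remark is alluding to, so the arguments coincide.
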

    \begin{proof}
        Consider a cut-off $\rho\in C_c^\infty(\Omega)$ such that $\rho=1$ in $\omega$. We now apply Theorem~\ref{thm:full_space_hodge}\ref{it:hodge_BC} to decompose $\rho \tilde{u}_0$ as follows: 
        \[\rho \tilde u_0=u_\omega+\psi,\] 
        where $u_{\omega} \in \dot{W}^{k,p}(\R^n,U)$ such that $\C^{\ast}u_{\omega}=0$ and $\B u_{\omega} = \B (\rho \tilde u_0) \in L^p(\R^n,V)$.  {The third assertion follows from the estimate of Theorem}~\ref{thm:full_space_hodge}\ref{it:hodge_BC} and the product rule. The last claim follows from $\B u_\omega=\B \tilde u_0$ in $\omega$ and Lemma~\ref{lem:smooth_domains}. 
    \end{proof}
    In particular, the conclusion of this subsection is that we can assume that $\Omega$ is smooth, $u_0\in W^{k,p}(\Omega, U)$, and that minimizers are of class $W^{k,p}(\Omega,U)$ and $\C^*$-free in the statement of Theorem \ref{thm:main}.

\section{Local linear estimates}\label{sec:linear_inequ}
\subsection{Local Korn-type inequality} 
We will establish some linear estimates for the minimizers of Problem \eqref{eq:P_Omega} which are  a generalization of Lemma~2.10 in \cite{RaitaTAMS}. Below, $\B$ and $\C$ are as in Theorem~\ref{thm:BC} so $k,\,l$ are the orders of $\B,\,\C$ respectively.
\begin{proposition}\label{prop:Korn}
        Suppose that $\Omega \subset \R^n$ is a bounded open set, and $v \in \mathscr{D}^{\prime}(\Omega,U)$ satisfies \[\B v \in L^p_{\locc}(\Omega,V) \quad \mbox{and}\quad \C^{\ast}v \in \dot{W}_{\locc}^{-q,p}(\Omega,U)\] for some $1<p<\infty$, where $q=l-k$. Then we have $v \in W^{k,p}_{\locc}(\Omega,U)$, and for any open subsets $\omega \Subset \Omega^{\prime} \Subset \Omega$, there holds 
            \begin{equation}\label{eq:Korn}
                \normm{D^k v}_{L^p(\omega)} \leq C(\normm{\B v}_{L^p(\Omega^{\prime})} + \normm{\C^{\ast}v}_{\dot{W}^{-q,p}(\Omega^{\prime})}+ \normm{v}_{W^{k-1,p}(\Omega^{\prime})})
            \end{equation} with $C=C(n,p,\B,\C,\omega, \Omega^{\prime})>0$. In particular, if $\C^{\ast} v=0$ in $\Omega$, for any ball $B_R=B(x_0,R)\Subset \Omega$ and any $\tau \in (0,1)$, we have
            \begin{equation}\label{eq:Korn_ball}
                \begin{split}
                    \int_{B_{\tau R}} \abss{D^k v}^p\ww \dif x &\leq C_1 \brac{\int_{B_R} \abss{\B v}^p \ww \dif x + \sum_{i=0}^{k-1} \int_{B_R}\frac{\abss{D^i v}^p}{R^{p(k-i)}} \ww\dif x}\\
                    \int_{B_{\tau R}} |V_p(D^k v)|^2\dif x &\leq C_2 \brac{\int_{B_R} |V_p({\B v})|^2 \dif x + \sum_{i=0}^{k-1} \int_{B_R}\left|V_p\left(\frac{{D^i v}}{R^{k-i}}\right)\right|^2 \dif x},
                    \end{split}
            \end{equation} where $\ww$ is any $A_p$ weight, $C_1 = C(n,p,\B,\C, \tau,[\ww]_{A_p})>0$, $C_2=C_2(n,p,\B,\C, \tau)>0$, and the integrals in the first inequality may take the value $\infty$.
    \end{proposition}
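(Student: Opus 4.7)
The plan is to first establish a full-space Korn-type multiplier estimate via Fourier analysis, then localize by cut-offs, and finally derive the ball estimates by scaling.

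\emph{Step 1 (Full-space estimate).} By Theorem~\ref{thm:BC}, $\rmim\C(\xi)=\ker\B(\xi)$ for $\xi\in\R^n\setminus\{0\}$; taking orthogonal complements gives $\ker\C^*(\xi)=\rmim\B^*(\xi)$, so $\ker\B(\xi)\cap\ker\C^*(\xi)=\{0\}$ for $\xi\neq 0$. Consequently, the normalized symbol $\xi\mapsto(|\xi|^{-k}\B(\xi),\,|\xi|^{-l}\C^*(\xi))$ is injective and $0$-homogeneous away from the origin, admitting a smooth $0$-homogeneous left inverse $(M_1(\xi),M_2(\xi))$. This yields the pointwise decomposition
\begin{equation*}
|\xi|^k\hat v(\xi)=M_1(\xi)\widehat{\B v}(\xi)+M_2(\xi)\bigl(|\xi|^{-q}\widehat{\C^*v}(\xi)\bigr).
\end{equation*}
The H\"ormander--Mikhlin multiplier theorem combined with Theorem~\ref{thm:HMmultiplier} then gives
\begin{equation*}
\normm{D^kv}_{L^p(\R^n)}\leq C\bigl(\normm{\B v}_{L^p(\R^n)}+\normm{\C^*v}_{\dot W^{-q,p}(\R^n)}\bigr),
\end{equation*}
initially on Schwartz functions and then by density.

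\emph{Step 2 (Local estimate and interior regularity).} Given $\omega\Subset\Omega'\Subset\Omega$, choose $\chi\in C_c^\infty(\Omega')$ with $\chi\equiv1$ on $\omega$. Applying Step~1 to the mollification $(\chi v)*\rho_\eps$ produces $\eps$-uniform bounds; passing to the limit simultaneously yields $v\in W^{k,p}_{\locc}(\Omega)$ and
\begin{equation*}
\normm{D^kv}_{L^p(\omega)}\leq C\bigl(\normm{\B(\chi v)}_{L^p(\R^n)}+\normm{\C^*(\chi v)}_{\dot W^{-q,p}(\R^n)}\bigr).
\end{equation*}
Leibniz expansion gives $\B(\chi v)=\chi\B v+[\B,\chi]v$, where $[\B,\chi]$ is a differential operator of order $\leq k-1$ in $v$, so $\normm{[\B,\chi]v}_{L^p}\lesssim\normm{v}_{W^{k-1,p}(\Omega')}$. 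The corresponding expansion for $\C^*(\chi v)$ has commutator of order $\leq l-1=k+q-1$. Each top-order contribution $(\partial^\gamma\chi)\partial^\alpha v$ with $k\leq|\alpha|\leq l-1$ is rewritten as $\partial^{\alpha'}\!\bigl[(\partial^\gamma\chi)\partial^{\alpha-\alpha'}v\bigr]$ modulo lower-order terms, with $|\alpha'|=|\alpha|-(k-1)\leq q$; the distributional derivatives of order $\leq q$ are absorbed into the $\dot W^{-q,p}$-norm by duality, giving $\normm{[\C^*,\chi]v}_{\dot W^{-q,p}}\lesssim\normm{v}_{W^{k-1,p}(\Omega')}$. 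Combining yields~\eqref{eq:Korn}. The principal technical difficulty is precisely this commutator: derivatives of $v$ of order up to $l-1>k-1$ are a priori meaningless as functions, but the $q=l-k$ ``free'' negative derivatives supplied by the $\dot W^{-q,p}$-norm allow us to integrate by parts against test functions, transferring the excess derivatives onto $\chi$ and leaving at most $k-1$ derivatives acting on $v$.

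\emph{Step 3 (Ball version and the $V_p$-estimate).} Assuming $\C^*v=0$ on $\Omega$ and $B_R=B(x_0,R)\Subset\Omega$, we apply \eqref{eq:Korn} with $\omega=B_{\tau R}$ and an intermediate $\Omega'\Subset B_R$, so the $\C^*v$ term vanishes. Rescaling $x\mapsto(x-x_0)/R$ reduces the statement to a dimensionless estimate on the unit ball (with constant depending only on $\tau$); reverting the scaling produces the weights $R^{-(k-i)}$ on each lower-order derivative, yielding the first line of \eqref{eq:Korn_ball}. For the $V_p$-estimate we combine this with the analogous $L^2$-estimate: when $p\geq2$, Lemma~\ref{lem:Vest}\ref{it:Vest} gives $|V_p(z)|^2\sim|z|^2+|z|^p$ and the result follows by addition of the $L^2$ and $L^p$ estimates; for $1<p<2$, we invoke the Calder\'on--Zygmund theory for the $0$-homogeneous multipliers from Step~1 in the Orlicz space generated by the $N$-function $\Phi_p(t):=|V_p(t)|^2$, whose mixed $t^2/t^p$ growth is compatible with the extension of the H\"ormander--Mikhlin theorem to this Orlicz setting.
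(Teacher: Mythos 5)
Your skeleton matches the paper's: form the elliptic combination of $\B$ and $\C^*$, obtain a full-space multiplier estimate, localize with a cut-off, and handle the commutator of $\C^*$ with the cut-off by transferring up to $q$ derivatives onto the test function through the $\dot W^{-q,p}$-duality. That last idea (your observation about the $q$ "free'' negative derivatives) is exactly what appears in the paper's estimate of $\brangle{c_i[D^i\rho,D^{l-i}v],\varphi}$, and the passage from \eqref{eq:Korn} to the first line of \eqref{eq:Korn_ball} by scaling is identical. Two points, however, deserve scrutiny.

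\textbf{The interior-regularity claim in Step 2 is circular.} You assert that mollification of $\chi v$ ``produces $\eps$-uniform bounds; passing to the limit simultaneously yields $v\in W^{k,p}_{\locc}(\Omega)$.'' But the right-hand side of your mollified estimate contains $\|\B(\chi v)*\rho_\eps\|_{L^p}$ and $\|\C^*(\chi v)*\rho_\eps\|_{\dot W^{-q,p}}$, and your own Leibniz expansion shows these are controlled only by $\|v\|_{W^{k-1,p}(\Omega')}$, which is not given by the hypotheses $\B v\in L^p_{\locc}$, $\C^*v\in\dot W^{-q,p}_{\locc}$, $v\in\mathscr D'(\Omega)$. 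So the uniform bound you need in order to pass to the limit already presupposes the very regularity you are trying to prove. The paper avoids this by an explicit elliptic bootstrap: starting from $|\alpha|=l-1$ and the Riesz potential $K=\mathscr F^{-1}m$ with $(-1)$-homogeneous $m$, it first establishes $D^{l-1}v\in W^{-(q-1),p}_{\locc}$ (here the commutator terms $K*(b_j[D^j\rho,D^{l-j}v])$ are dealt with via pseudolocality, since $D^j\rho\equiv 0$ on $\omega$), then iterates downwards in regularity using $\mathscr P$ replaced by $D^{k+q-i}$ and finally by $D^{i+1}$. That inductive scheme is what turns the a priori estimate into an actual regularity statement, and is the one genuinely new step you are missing.

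\textbf{The $V_p$-estimate for $1<p<2$.} For $p\geq 2$ your addition trick is fine (on bounded domains $L^p_{\locc}\subset L^2_{\locc}$, so both Korn estimates are available, and $|V_p|^2\sim|\cdot|^2+|\cdot|^p$). For $1<p<2$ you invoke ``the extension of the H\"ormander--Mikhlin theorem to this Orlicz setting'', which is not a precise reference and is not the route the paper takes. The paper instead observes that $t\mapsto(1+t^2)^{(p-2)/2}t^2$ is a Young function satisfying $\Delta_2$ and $\nabla_2$ and applies the extrapolation lemma of Conti--Gmeineder (Lemma~2.2 in \cite{CG22}): it then suffices to check the Korn estimate in weighted $L^p(\dif\mu)$ for Muckenhoupt weights $\mu\in A_p$, which follows from the weighted boundedness of Calder\'on--Zygmund operators and a weighted analogue of Theorem~\ref{thm:HMmultiplier}. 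This extrapolation route is concrete and reuses the machinery already set up; you should replace your vague appeal with it, or supply a precise Orlicz multiplier theorem.

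Otherwise your proposal is sound and conceptually close to the paper's argument.
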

    \begin{proof}
    \textbf{Part 1.} Regularity of $v$. 
    
        Take any subsets $\omega$ and $\Omega^{\prime}$ as in the assumption, an open set $\Omega^{\pprime}$ with $\omega\ssubset \Omega^{\pprime}\ssubset \Omega^{\prime}$, and a cut-off function $\rho \in C_c^{\infty}(\Omega^{\pprime})$ with $\indi_{\omega}\leq \rho \leq \indi_{\Omega^{\pprime}}$. Define $w := \rho v$ and an operator $\PP:= (D^q\B, \C^{\ast})$.
        
        \par Given any multi-index $\alpha$ with $\abss{\alpha}=k+q-1=l-1$, we have 
            \begin{align*}
             \widehat{\partial^{\alpha}w}(\xi) &= (-i)^{k+q-1}\xi^{\alpha}\widehat{w}(\xi)\\
            &=  (-i)^{k+q-1}\xi^{\alpha} \PP^{\dag}(\xi)\PP (\xi)\widehat{w}(\xi)\\
            &= i\, \xi^{\alpha} \abss{\xi}^{-(k+q)} \PP^{\dag}\brac{\frac{\xi}{\abss{\xi}}} \widehat{\PP w}(\xi)\\
            &\eqqcolon\ m(\xi) \widehat{\PP w}(\xi),
            \end{align*}where $\PP^{\dag}(\xi)$ is a left-inverse of $\PP (\xi)$, and the multiplier $m(\xi)$ is smooth away from the origin and $(-1)$-homogeneous. Let $K:= \FT^{-1}m$, which is $(1-n)$-homogeneous and integrable near the origin. Then the partial derivative $\partial^{\alpha}(\rho w) $ can be expressed as follows:
        \begin{align*}
            \partial^{\alpha}w = \partial^{\alpha}(\rho v) = K \ast (\PP (\rho v))  
            =K\ast (\rho \PP v) + \sum_{j=1}^{k+q}K\ast (b_j[D^j\rho, D^{k+q-j}v]), \notag
        \end{align*} 
        where $\{b_j[\cdot, \cdot]\}_{j=1}^{k+q}$ are bilinear forms with constant coefficients.   
        Notice that $D^j\rho \equiv 0$ on $\omega$, and thus \[\mbox{sing}\,\spt (b_j[D^j\rho, D^{k+q-j}v]) \subset \R^n \setminus \omega,\] and we have that the terms in the sum are smooth on $\omega$. To control the first term, notice that for any $\varphi \in C_c^{\infty}(\omega,U)$ there holds
        \begin{align*}
            \brangle{K\ast (\rho \PP v), \varphi} &= \brangle{\PP v,\rho (K^{\prime}\ast \varphi)}\\ 
            &= (-1)^q \brangle{\B v, (\di)^q(\rho (K^{\prime}\ast \varphi))} +\brangle{\C^{\ast}v,\rho (K^{\prime}\ast \varphi)},
        \end{align*} 
        where $K^{\prime}(\cdot) = K(-\,\cdot\,)$ is the dual convolution kernel of $K$. The second term can be easily controlled as follows 
        \begin{align*}
            \brangle{\C^{\ast}v,\rho (K^{\prime}\ast \varphi)} &\leq \normm{\C^{\ast}v}_{\dot{W}^{-q,p}(\Omega^{\pprime})} \normm{\rho (K^{\prime}\ast \varphi)}_{\dot{W}^{q,p'}(\Omega^{\pprime})} \\ 
            &\leq C\normm{\C^{\ast}v}_{\dot{W}^{-q,p}(\Omega^{\pprime})} \normm{ K^{\prime}\ast \varphi}_{W^{q,p'}(\Omega^{\pprime})}.
        \end{align*} 
        By Lemma 7.12 in \cite{GT01}, we have 
        \[ \normm{ K^{\prime}\ast \varphi}_{L^{p'}(\Omega^{\pprime})} \leq C(n)\Le^n(\Omega^{\pprime})^{\frac{1}{n}} \normm{\varphi}_{L^{p'}(\Omega^{\pprime})}, \] 
        and the estimate of the higher-order terms requires Theorem \ref{thm:HMmultiplier}: 
        \[ \sum_{i=1}^q\normm{ K^{\prime}\ast \varphi}_{\dot{W}^{i,p'}(\Omega^{\pprime})} \leq \sum_{i=1}^q\normm{ K^{\prime}\ast \varphi}_{\dot{W}^{i,p'}(\R^n)} \leq C \sum_{i=1}^q\normm{\varphi}_{\dot{W}^{i-1,p'}(\Omega^{\pprime})}.\] 
        This implies 
        \[ \normm{ K^{\prime}\ast \varphi}_{W^{q,p'}(\Omega^{\pprime})} \leq C \normm{\varphi}_{W^{q-1,p'}(\Omega^{\pprime})},\] and to obtain $\partial^{\alpha}v \in W^{-(q-1),p}_{\locc}(\Omega,U)$, we need to estimate the $L^{p'}$-norm of $(\di)^q(\rho (K^{\prime}\ast \varphi))$. This term expands as 
        \[(\di)^q(\rho (K^{\prime}\ast \varphi)) = \sum_{i=0}^q c_i[D^i \rho, D^{q-i}(K^{\prime}\ast \varphi)], \] where $\{c_i[\cdot\, ,\cdot]\}_{i=0}^q$ are bilinear forms with constant coefficients. The terms with $i<q$ can be estimated as follows
        \begin{align*}
             \left\|{\sum_{i=0}^{q-1} c_i[D^i \rho, D^{q-i}(K^{\prime}\ast \varphi)]}\right\|_{L^{p'}(\Omega^{\pprime})} 
            &\leq  C \sum_{i=0}^{q-1} \sup \abss{D^i \rho} \normm{K^{\prime}\ast\varphi}_{\dot{W}^{q-i,p{\prime}}(\R^n)}\\
            &\leq  C \sum_{i=0}^{q-1} \norm{\varphi}_{\dot{W}^{q-i-1, p'}(\R^n)}  \leq C\normm{\varphi}_{W^{q-1,p'}(\omega)},
        \end{align*}  where the second inequality follows from Theorem \ref{thm:HMmultiplier}. When $i=q$, we apply Lemma 7.12 in \cite{GT01} to estimate the corresponding term
        \begin{align*}
         \brangle{\B v, (D^q\rho) K^{\prime}\ast \varphi} 
        &\leq \ \norm{\B v}_{L^p(\Omega^{\pprime})} \normm{(D^q\rho) K^{\prime}\ast \varphi}_{L^{p'}(\Omega^{\pprime})} \\
        &\leq \ \norm{\B v}_{L^p(\Omega^{\pprime})} \sup \abss{D^q\rho} \normm{K^{\prime}\ast \varphi}_{L^{p'}(\Omega^{\pprime})}\\
        &\leq \  C(n) \Le^n(\Omega^{\pprime})^{\frac{1}{n}}\norm{\B v}_{L^p(\Omega^{\pprime})} \normm{\varphi}_{L^{p'}(\omega)}.
        \end{align*} 

    \par  From the above, we know that $\partial^{\alpha}v \in W^{-(q-1),p}_{\locc}(\Omega,U)$ for any multi-index $\alpha$ with $\abss{\alpha}=k+q-1$. Then with a procedure similar to the one above and $\PP$ replaced by $D^{k+q-i}$, we can show inductively that for any $\alpha$ with $\abss{\alpha}=k+q-i-1$, $\partial^{\alpha}v \in W^{-(q-i-1),p}_{\locc}(\Omega,U)$, for $i=1,\dots,q-1$. Furthermore, with $\PP$ replaced by $D^{i+1}$ and only applying Lemma 7.12 in \cite{GT01} in the last step, it is not hard to obtain $\partial^{\alpha} v \in {W}^{k-i,p}_{\locc}(\Omega,U)$ for any $\alpha$ with $\abss{\alpha}=i$, for $i =k-1,\dots,0$.
    \par\noindent \textbf{Part 2.} Full-space elliptic estimate.

    Having established that $v \in W^{k,p}_{\locc}(\Omega,U)$, we are now in the position to show the inequality \eqref{eq:Korn}. 
    Take $\rho$ as above and define again $w=\rho v$. 
   We then write in Fourier space
    \begin{align*}
        \hat w(\xi)=[\proj_{\ker \B(\xi)^\perp}+\proj_{\rmim \C(\xi)}]\hat w(\xi)=
        \B^\dagger(\xi)\B(\xi)\hat w(\xi)+\C^{*\dagger}(\xi)\C^*(\xi)\hat w(\xi),
    \end{align*}
    so that 
    \begin{align*}
        \widehat{D^k w}(\xi)=\B^\dagger\left(\dfrac{\xi}{|\xi|}\right)\widehat{\B w}(\xi)\otimes \left(\dfrac{\xi}{|\xi|}\right)^{\otimes k}+\C^{*\dagger}\left(\dfrac{\xi}{|\xi|}\right)\dfrac{\widehat{\C^* w}(\xi)}{|\xi|^q}\otimes \left(\dfrac{\xi}{|\xi|}\right)^{\otimes k}.
    \end{align*}
    By the H\"ormander--Mikhlin multiplier theorem and boundedness of Calder\'on--Zygmund operators on weighted $L^p$ spaces \cite[Theorem 7.4.6]{Grafakos1}, we have that for any Muckenhoupt weight $\ww \in A_p$
    \begin{align*}
        \|D^k w\|_{L^p(\R^n;\ww)}\leq C(n,p,[\ww]_{A_p})\left(\|\B w\|_{L^p(\R^n;\ww)}+\|\FT^{-1}(|\xi|^{-q}\widehat{\C^*w})\|_{L^p(\R^n;\ww)}\right).
    \end{align*}
    Using Theorem~\ref{thm:HMmultiplier}, we obtain
    \begin{align}\label{eq:HM}
        \normm{D^kw}_{L^p(\R^n;\ww)} \leq C(\normm{\B w}_{L^p(\R^n;\ww)}+\normm{\C^{\ast}w}_{\dot{W}^{-q,p}(\R^n;\ww)})
        \end{align} with $C=C(n,p,q,[\ww]_{A_p})>0$.
    \par \noindent\textbf{Part 3.} Localized weighted estimate and extrapolation. 
    
        In this part, we denote by $\ww$ an arbitrary $A_p$-weight. Since clearly $\normm{\partial^{\alpha}v}_{L^p(\omega;\ww)} \leq \normm{\partial^{\alpha}w}_{L^p(\R^n;\ww)}$, it is sufficient to estimate the right-hand side of the above inequality.  It is easy to see that \[\normm{\B w}_{L^p(\R^n;\ww)} = \normm{\B (\rho v)}_{L^p(\R^n;\ww)} \leq \normm{\B v}_{L^p(\Omega^{\pprime};\ww)}+ C\normm{v}_{W^{k-1,p}(\Omega^{\pprime};\ww)}.\]
        The other term in \eqref{eq:HM} can be controlled as follows: 
        \begin{align}\label{eq:Sobolev_localisation}
        \begin{split} \normm{\C^{\ast}w}_{\dot{W}^{-q,p}(\R^n;\ww)} &\leq C \normm{\C^{\ast}w}_{\dot{W}^{-q,p}(\Omega^{\prime};\ww)}\\  
        &\leq C\normm{\rho \C^{\ast}v}_{\dot{W}^{-q,p}(\Omega^{\prime};\ww)} + C\sum_{i=1}^{l}\normm{d_i[D^i \rho, D^{l-i}v]}_{\dot{W}^{-q,p}(\Omega^{\prime};\ww)}, 
        \end{split}
        \end{align}
        where $\{d_i[\cdot, \cdot]\}_{i=1}^{l}$ are bilinear forms with constant coefficients. 
\par To see the first inequality, take any $\phi \in C_c^{\infty}(\R^n,U)$ and a polynomial $P_{\phi}$ of degree $q-1$ such that $\int_{\Omega^{\prime}}D^i(\phi-P_{\phi})\dif x=0$, $i=0,\dots,q-1$.
Take a cut-off function $\chi\in C_c^{\infty}(\Omega')$ with $\chi \equiv 1$ on $\Omega^{\pprime}$. Notice that $\spt w \subset \Omega''$ and $\deg \C =l \geq q$, then we have 
        \begin{align*}
            \brangle{\C^{\ast}w,\phi} &= (-1)^l \brangle{w,\C \phi}\\
            &= (-1)^l \brangle{w,\C (\chi(\phi-P_{\phi}))} \\
            &= \brangle{\C^{\ast}w,\chi(\phi-P_{\phi})}.
        \end{align*} Set $\ww'=\ww^{-1/(p-1)}$, which lies in $A_{p'}$. Then if $\C^{\ast}w \in \dot{W}^{-q,p}(\Omega',U;\ww)$, by Poincar\'{e}'s inequality on weighted Sobolev spaces \cite[Chapters~1 and~15]{HKM}, there holds
        \begin{align*}
            & \brangle{\C^{\ast}w,\phi} =  \brangle{\C^{\ast}w,\chi(\phi-P_{\phi})} \\
            &\leq \normm{\C^{\ast}w}_{\dot{W}^{-q,p}(\Omega^{\prime};\ww)}\normm{\chi(\phi-P_{\phi})}_{\dot{W}^{q,p'}(\Omega^{\prime};\ww')} \\
            &\leq C\normm{\C^{\ast}w}_{\dot{W}^{-q,p}(\Omega^{\prime};\ww)} \normm{\phi}_{\dot{W}^{q,p'}(\R^n;\ww')},
        \end{align*} where $C=C(n,p,q,[\ww]_{A_p},\Omega',\Omega'')>0$.
        
\par Test the terms on the second line in \eqref{eq:Sobolev_localisation} with any $\varphi \in C_c^{\infty}(\Omega^{\prime},U)$ separately, and similarly we have
        \begin{align*}
            \brangle{\rho \C^{\ast}v, \varphi}& = \brangle{\C^{\ast}v, \rho \varphi} \\
            &\leq \normm{\C^{\ast}v}_{\dot{W}^{-q,p}(\Omega^{\prime};\ww)} \normm{\rho \varphi}_{\dot{W}^{q,p'}(\Omega^{\prime};\ww')} \\
            &\leq  C\normm{\C^{\ast}v}_{\dot{W}^{-q,p}(\Omega^{\prime};\ww)} \normm{\varphi}_{W^{q,p'}(\Omega^{\prime};\ww')} \\
            &\leq C\normm{\C^{\ast}v}_{\dot{W}^{-q,p}(\Omega^{\prime};\ww)} \normm{\varphi}_{\dot{W}^{q,p'}(\Omega^{\prime};\ww')},
        \end{align*} 
        where the last inequality follows from Poincar\'{e}'s inequality and $C=C(n,p,q,[\ww]_{A_p},\omega,\Omega'')>0$;
        \begin{align*}
            \brangle{d_i[D^i\rho,D^{l-i}v],\varphi} &= \brangle{D^{l-i}v, d_i^{\prime}[D^i\rho,\varphi]} \\
            &= (-1)^{l-i-t_i}\brangle{D^{t_i}v, (\di)^{l-i-t_i}d_i^{\prime}[D^i\rho,\varphi]} \\
            &\leq C\normm{D^{t_i}v}_{L^p(\Omega^{\prime};\ww)}\normm{\varphi}_{W^{q,p'}(\Omega^{\prime};\ww')} \\
            &\leq  C\normm{D^{t_i}v}_{L^p(\Omega^{\prime};\ww)}\normm{\varphi}_{\dot{W}^{q,p'}(\Omega^{\prime};\ww')},
        \end{align*} where $\{d_i^{\prime}[\cdot,\cdot]\}_{i=1}^{l}$ are also bilinear forms with constant coefficients, $t_i:=\min\{l-i,k-1\}$, and $C=C(n,p,\C,[\ww]_{A_p},\Omega')>0$. Notice that the terms in the above estimates are allowed to take infinity if the corresponding distributions are not in $\dot{W}^{-q,p}(\Omega',U;\ww)$ or $L^p(\Omega', (U^n)^{t_i};\ww)$, respectively. This proves \eqref{eq:Korn} with $\ww =1$ and also the weighted inequality in \eqref{eq:Korn_ball} by scaling. The constant in the latter only depends on $\ww$ through $[\ww]_{A_p}$, so that we can apply Lemma~\ref{lem:extrapolation} to $(f,g)$ with \[f = \indi_{B_{\tau R}}\abss{D^k v}, \quad g= \indi_{B_R} \brac{\abss{\B v} + \sum_{i=0}^{k-1} \frac{\abss{D^i v}}{R^{k-i}}},\] which gives the modular inequality in \eqref{eq:Korn_ball} with $C_2=(n,p,\B,\C,\tau, \Delta_2(V_p), \Delta_2(V^{\ast}_p))>0$. Since $V_p$ is a fixed Young function related to $p$, we can reduce the dependence on $\Delta_2(V_p)$ and $\Delta_2(V^{\ast}_p)$ to that on $p$ itself.
        \end{proof}

\subsection{Estimates for linear systems}\label{subsec:linearsys}

\par We investigate the regularity of the linear system 
    \[\B^{\ast}(A \B h)=0,\, \C^{\ast}h =0.\] The following estimate is in a similar spirit to those for linear elliptic systems (see, for example, \cite{Gia83} Chapter III) while also requiring the Korn-type inequality \eqref{eq:Korn_ball} since we are dealing with constant rank operators instead of elliptic. We will use this  in Section \ref{sec:partialreg}, where the main partial regularity claim is proved.
\begin{theorem}\label{thm:linear_system}
Suppose that $\B$ and $\C$ are as above, and $\Omega \subset \R^n$ is an open set. If $h \in \mathscr{D}^{\prime}(\Omega,U)$ satisfies the following  
    \begin{equation}\label{eq:linear_system}
        \B^{\ast}(A \B h)=0, \quad \C^{\ast}h =0
    \end{equation} in $\Omega$, where $A\col V \times V \to \R$ is a symmetric linear operator with constant coefficients and satisfies \begin{align*}
    A[z,z] \geq \lambda \abss{z}^2\text{ for }z\in \bigcup_{\xi\in\R^n\setminus\{0\}} \rmim\B(\xi)\quad\text{and}\quad \abss{A[z,w]}\leq \Lambda \abss{z}\abss{w} \mbox{ for }z,w \in V 
    \end{align*}
    with $0<\lambda \leq \Lambda <\infty$. Then we have $h \in C^{\infty}(\Omega,U)$ and the following estimate
    \begin{equation}\label{eq:linear_system_est}
        r^2\sup_{B_r} \abss{D^{k+1}h}+r\sup_{B_r} \abss{D^k h} + \sup_{B_r} \abss{D^{k-1}h} \leq C\fint_{B_{2r}}\abss{D^{k-1} h}\dif x
    \end{equation} for any ball $B_r$ with $B_{2r} \Subset \Omega$ and $0<r<1$, where $C=C(n,\dim V, \frac{\Lambda}{\lambda})>0$.
\end{theorem}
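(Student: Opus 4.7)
The plan is to run the classical regularity iteration for constant coefficient elliptic systems, with the Korn-type estimate \eqref{eq:Korn_ball} replacing the usual Calder\'on--Zygmund gradient estimate.

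First, I would verify that the system \eqref{eq:linear_system} is (overdetermined) elliptic: if $\B^{\ast}(\xi) A \B(\xi) h = 0$ and $\C^{\ast}(\xi) h = 0$ for some $\xi \in \R^n \setminus \{0\}$, then pairing the first equation with $h$ and using coercivity of $A$ forces $\B(\xi) h = 0$, so $h \in \ima \C(\xi)$ by \eqref{eq:exact_B}; combined with $h \in (\ima \C(\xi))^{\perp}$, this yields $h = 0$. Hence the principal symbol of the combined operator is injective, so $h$ is smooth on $\Omega$. Two reductions follow: letting $P$ denote the homogeneous degree-$k$ polynomial with $T(D^k P) = b$, the substitution $h \mapsto h - P$ preserves \eqref{eq:linear_system} (note $\B P$ is constant and $\C^{\ast} P = 0$ since $\mathrm{ord}\,\C^{\ast} = l > k$) and reduces to $b = 0$, while standard rescaling $h \mapsto h(r\,\cdot)$ on $B_2$ reduces the problem to proving $\sup_{B_1}|D^{k+1} h| \leq C \fint_{B_2}|D^k h|\,dx$ whenever $h$ solves \eqref{eq:linear_system} on $B_2$. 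A final reduction replaces $h$ by $h - \Pi$, where $\Pi$ is the polynomial of degree $k-1$ chosen so that the averages $\fint_{B_2} D^j(h - \Pi)$ vanish for $0 \leq j < k$; since $\B \Pi = 0$ and $\C^{\ast} \Pi = 0$, this preserves the system, does not alter $D^{k+1} h$, and Poincar\'e's inequality gives $\|D^j(h-\Pi)\|_{L^2(B_2)} \lesssim \|D^k h\|_{L^2(B_2)}$ for all $0 \leq j \leq k-1$.

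The core is a Caccioppoli--Korn bootstrap. By translation invariance, every partial derivative $v = \partial^{\alpha} h$ solves \eqref{eq:linear_system}, and for a cutoff $\eta$ supported in $B_R$ with $\eta \equiv 1$ on $B_r$, testing $\B^{\ast}(A \B v) = 0$ with $\B(\eta^{2k} v)$, expanding Leibniz commutators, and using the ellipticity of $A$ together with Young's inequality yields
\begin{equation*}
 \int \eta^{2k}\,|\B v|^2\,dx \leq C \sum_{j=0}^{k-1} \frac{1}{(R-r)^{2(k-j)}} \int_{B_R} |D^j v|^2\,dx.
\end{equation*}
Combined with \eqref{eq:Korn_ball}, which applies because $\C^{\ast} v = 0$, this produces, by induction on $m \geq 0$, the interior hierarchy
\begin{equation*}
 \|D^{k+m}(h - \Pi)\|_{L^2(B_{1/2 + 2^{-m-2}})} \leq C_m \|D^k h\|_{L^2(B_2)}.
\end{equation*}
Choosing $m > n/2$ and applying the Sobolev embedding $W^{m,2} \hookrightarrow L^{\infty}$ to $D^{k+1} h = D^{k+1}(h - \Pi)$ upgrades this to $\sup_{B_{1/2}}|D^{k+1} h| \leq C\|D^k h\|_{L^2(B_2)}$. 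The passage from the $L^2$- to the $L^1$-average is then a standard dyadic interpolation: applying the same bound at every scale and iterating the trivial estimate $\|D^k h\|_{L^2(B_\rho)}^2 \leq \|D^k h\|_{L^{\infty}(B_\rho)}\|D^k h\|_{L^1(B_\rho)}$ absorbs the $L^{\infty}$ factor into the constant, yielding the desired bound $r\sup_{B_r}|D^{k+1}h| \leq C\fint_{B_{2r}}|D^k h - b|\,dx$ after undoing the scaling and the polynomial shift.

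The main difficulty I anticipate lies in the Caccioppoli step, where the $k$-fold Leibniz expansion generates $k$ layers of commutator terms, each of which must be dominated via \eqref{eq:Korn_ball} rather than a standard gradient inequality. The argument succeeds because of the symmetric roles played by $\B$ and $\C^{\ast}$ in the constant-coefficient setting and because $\C^{\ast}$ annihilates polynomials of degree below $k$, which is what legitimizes the Poincar\'e-based reduction on the lower-order derivatives.
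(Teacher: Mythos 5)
Your proposal follows essentially the same route as the paper's own proof: subtract a degree-$k$ polynomial to normalize $D^k h$, establish a $\B$-level Caccioppoli estimate by testing the Euler--Lagrange system of the difference with a cut-off multiple of itself, upgrade the $\B$-control to $D^k$-control via the Korn inequality \eqref{eq:Korn_ball} (which is precisely where the constraint $\C^\ast h = 0$ is used), iterate to all orders, pass through Sobolev/Morrey to an $L^\infty$ bound on $D^{k+1}h$, and then interpolate from the $L^2$-average to the $L^1$-average in the standard Giusti fashion. The only presentational difference is the smoothness step: you argue directly from the injectivity of the combined symbol of the (mixed-order, overdetermined) system $(\B^\ast A\B,\ \C^\ast)$, implicitly invoking Douglis--Nirenberg-type considerations, whereas the paper reduces to $A=I$ and forms the single even-order elliptic operator $\PP^\ast\PP$ with $\PP=(D^q\B,\C^\ast)$ --- the latter is slightly cleaner since it avoids mixed-order ellipticity altogether, but your argument reaches the same conclusion. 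A small slip: the polynomial $P$ should satisfy $D^k P = b$ (not $T(D^k P) = b$), since $b$ is a constant $k$-tensor matching $D^k h$; this is harmless for the remainder of the argument.
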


\begin{proof}
Consider the operator $\PP:=\Delta^q\B^*A\B+\C\C^*$, so that $\PP h=0$ (recall that $\B$ and $\C$ have orders $k$ and $l$ respectively and $q=l-k$. We claim that $\PP$ is elliptic, i.e., $\ker \PP(\xi)=\{0\}$ for all $\xi\neq0$.

To see this, let $\xi\neq0$, $u_0\in \ker \PP(\xi)$, and look at
\begin{align*}
    0&=\langle u_0, \PP(\xi)u_0 \rangle=|\xi|^{2q}\langle u_0, \B^*(\xi)A\B(\xi)u_0 \rangle+\langle u_0, \C(\xi)\C^*(\xi)u_0 \rangle\\
    &= |\xi|^{2q} A[\B(\xi)u_0,\B(\xi)u_0]+|\C^*(\xi)u_0|^2\geq  \lambda|\xi|^{2q} |\B(\xi)u_0|^2+|\C^*(\xi)u_0|^2,
\end{align*}
so $u_0\in \ker \B(\xi)\cap \ker \C^*(\xi)=\ker \B(\xi)\cap [\rmim \C(\xi)]^\perp=\ker \B(\xi)\cap [\ker \B(\xi)]^\perp=\{0\}$.

This proves our claim that $\PP$ is elliptic, which implies the smoothness of $h$ in $\Omega$.
    
    \par Now we show the estimate. Take a ball $B_{r}$ such that $B_{2r}\Subset \Omega$, and a $k_0$-homogeneous polynomial $P$ ($k_0:=\max\{0,k-2\}$) such that \[ (D^i P)_{2r} =(D^i h)_{2r}, \ 0\leq i \leq \max\{0,k-2\}.\] The function $h-P(=:\tilde{h})$ satisfies the first system in \eqref{eq:linear_system} as well. Then, for fixed $s,t$ with $r\leq t<s\leq 2r$, we take a cut-off function $\rho \in C_c^{\infty}(B_{2r})$ satisfying 
        \[ 0\leq \rho \leq \chi_{B_s}, \quad \rho \equiv 1 \ \mbox{ in }B_{t}, \quad \mbox{and}\quad \abss{D^i \rho} \leq C(s-t)^{-i}, \ i=1,\dots, k,\] and test the first system in \eqref{eq:linear_system} with $\rho^2 \tilde{h}$:
        \begin{align} \label{eq:lnsystem_test}
        \begin{split}
            0 = \int_{\Omega} A[\B \tilde{h}, \B(\rho^2 \tilde{h})] \dif x= \int_{\Omega} A[\B \tilde{h}, \rho^2 \B \tilde{h} + \sum_{i=1}^k R_i(D^i \rho^2,D^{k-i}\tilde{h})]\dif x\\
            = \int_{\Omega} A[\B (\rho \tilde{h}),\B (\rho \tilde{h})]\dif x + \sum_{i=1}^k \int_{\Omega}(A[R_i(D^i \rho,D^{k-i}\tilde{h}),R_i(D^i \rho,D^{k-i}\tilde{h})] \\
            \hspace{.7cm} -2A[\B (\rho \tilde{h}),R_i(D^i \rho,D^{k-i}\tilde{h})]  +A[\B \tilde{h},R_i(D^i \rho^2,D^{k-i}\tilde{h})]) \dif x,
        \end{split}
        \end{align} where $\{R_i[\cdot, \cdot]\}_{i=1}^k$ are bilinear forms and take values in $V$. By the Plancherel theorem and the ellipticity of $A$ in $\cup_{\xi\in\R^n\setminus\{0\}} \rmim\B(\xi)$, we know that 
        \[ \lambda \int_{B_{s}} \abss{\B(\rho \tilde{h})}^2 \dif x \leq \int_{\Omega} A[\B (\rho \tilde{h}),\B (\rho \tilde{h})]\dif x.  \] Then with \eqref{eq:lnsystem_test} and the boundedness of $A$, we have
       \begin{multline}
            \int_{B_{s}} \abss{\B(\rho \tilde{h})}^2 \dif x \leq \lambda^{-1}\int_{B_s} A[\B (\rho \tilde{h}),\B (\rho \tilde{h})]\dif x\\
            \leq  \frac{\Lambda}{\lambda} \sum_{i=1}^k\int_{B_s} (\abss{R_i(D^i \rho,D^{k-i}\tilde{h})}^2  +2 \abss{\B (\rho \tilde{h})}\abss{R_i(D^i \rho,D^{k-i}\tilde{h})} 
            +\abss{\B \tilde{h}}\abss{R_i(D^i \rho^2,D^{k-i}\tilde{h})})\dif x.
        \end{multline} Young's inequality and the choice of $\rho$ furthermore gives
        \begin{align}
        \begin{split}
            &\ \int_{B_{t}} \abss{\B \tilde{h}}^2 \dif x \leq \int_{B_{s}} \abss{\B(\rho \tilde{h})}^2 \dif x \\
            \leq&\ C(\Lambda/\lambda) \brac{\sum_{i=1}^{k} \int_{B_s}\frac{\abss{D^{k-i}\tilde{h}}^2}{(s-t)^{2i}}\dif x + \int_{B_s\setminus B_t} \abss{\B \tilde{h}}^2 \dif x}.
        \end{split}
        \end{align} Now apply the hole-filling technique, that is, add $C(\Lambda/\lambda)\int_{B_{t}} \abss{\B \tilde{h}}^2 \dif x$ to both sides of the above inequality and divide it by $C(\Lambda/\lambda)+1$, and then we obtain the following inequality by Lemma \ref{lem:iteration} and the choice of $P$:
        \begin{equation}
            \int_{B_{r}} \abss{\B \tilde{h}}^2 \dif x \leq C \sum_{i=0}^{k-1} \int_{B_{2r}} \frac{\abss{D^i \tilde{h}}^2}{r^{2(k-i)}}\dif x \leq C\int_{B_{2r}} \frac{\abss{D^{k-1} \tilde{h}}^2}{{r}^2}\dif x.
        \end{equation} Combining the first inequality in \eqref{eq:Korn_ball}, we can control the $L^2$ integral of $D^k \tilde{h}$ by that of $D^{k-1}\tilde{h}$ as follows
        \begin{align}
        \begin{split}
            &\ \int_{B_{r/2}} \abss{D^k h}^2 \dif x =\int_{B_{r/2}} \abss{D^k \tilde{h}}^2 \dif x \\
            \leq&\ C \brac{\int_{B_r} \abss{\B \tilde{h}}^2 \dif x +\sum_{i=0}^{k-1} \int_{B_{r}} \frac{\abss{D^i \tilde{h}}^2}{r^{2(k-i)}}\dif x} \\
            \leq&\ C\int_{B_{2r}} \frac{\abss{D^{k-1} \tilde{h}}^2}{r^2}\dif x = C\int_{B_{2r}} \frac{\abss{D^{k-1} h}^2}{r^2}\dif x.
        \end{split}
        \end{align}
    \par Notice that any derivative of $h$ of any order also solves the system \ref{eq:linear_system}, and thus we can repeat the above procedure (with a different radius ratio) to $D^i h$, $i = 0,\dots,d-1$ for some integer $d>n/2$, to obtain
        \begin{equation}\label{eq:lnsystem_L2est}
            \int_{B_r} \abss{D^{k+i}h}^2 \dif x \leq \cdots \leq C \int_{B_{2r}} \frac{\abss{D^{k-1}h}^2}{r^{2(i+1)}}\dif x.
        \end{equation}
    Then applying Morrey's inequality to $D^{k-1}\tilde{h}$, we have
        \begin{equation}\label{eq:lnsystem_supest}
            \sup_{B_r} \abss{D^{k-1}h}^2 \leq C\sum_{i=0}^d \fint_{B_r} r^{2i}\abss{D^{k-1+i}h}^2 \dif x \leq C\fint_{B_{2r}}\abss{D^{k-1}h}^2 \dif x.
        \end{equation} An $L^1$-estimate of this type follows from the same argument in Corollary 7.1 and Theorem 7.3 in \cite{Giusti03}:
        \begin{equation}\label{eq:lnsystem_L1est}
             \sup_{B_r} \abss{D^{k-1}h} \leq C\fint_{B_{2r}} \abss{D^{k-1}h} \dif x.
        \end{equation} 
    \par Similar to \eqref{eq:lnsystem_supest}, we can control $D^k h$ by replacing $h$ with $D h$ and also with \eqref{eq:lnsystem_L2est}, \eqref{eq:lnsystem_L1est}:
        \begin{align*}
            & \ r^2\sup_{B_{r}} \abss{D^k h}^2 \leq C \fint_{B_{4r/3}} \abss{D^k h}^2\dif x \\
            \leq &\ C \fint_{B_{5r/3}} \frac{\abss{D^{k-1}h}^2}{r^2}\dif x \leq C \brac{\fint_{B_{2r}} \abss{D^{k-1}h}\dif x}^2.
        \end{align*} The estimate of $D^{k+1}h$ can be done analogously, and the proof is now complete.

\end{proof}

\section{Higher integrability}\label{sec:higher_int}

\par The purpose of this section is to establish higher integrability for generalized minimizers of \eqref{eq:P_Omega_Afree} and \eqref{eq:P_Omega}. We follow the commonly used strategy for proving higher integrability (\cite{Gia83}, Chapter V), which involves first showing a Caccioppoli-type inequality and then the application of Poincar\'{e}'s inequality and Gehring's lemma. However, the Korn-type inequality \eqref{eq:Korn} is required since the constant operator $\B$ is here in the place of the gradient operator.

\subsection{Caccioppoli-type inequalities}\label{subsec:caccioppoli}
We prove a Caccioppoli-type inequality, which applies to non-autonomous integrands and arbitrary constant rank operators.
\begin{proposition}\label{prop:Caccioppoli}
Suppose that the integrand $f$ satisfies assumptions \textnormal{\ref{it:fgrowth}, \ref{it:fcontinuity}} and \textnormal{\ref{it:f_B-qcstrong}$'$}, and that $u \in W^{k-1,p}(\Omega,U)$ such that $\B u\in L^p(\Omega,V)$ is a generalized minimizer of \eqref{eq:P_Omega}, and the map $a\col \Omega \to U$ is a $(k-1)$-polynomial such that $\B a$ is constant with $\abss{\B a}\leq M$ for some $M>0$. Then for any $\tau \in (0,1)$ there exists a constant $C=C(n,\dim V,\B, M,p,L, \ell,\tau)>0$ such that the following estimate holds true for any ball $B(x_0,R)(=:B_R) \Subset \Omega$:
        \begin{multline}\label{eq:BCaccioppoli}
            \int_{B_{\tau R}}\abss{V_p(\B(u-a))}^2 \dif x  \leq C \sum_{i=0}^{k-1} \int_{B_R} \abs{V_p\left(\frac{D^{i}(u-a)}{R^{k-i}}\right)}^2 \dif x\\ 
            +CR\sum_{i=0}^{k-1} \int_{B_R} \brangle{\frac{D^i(u-a)}{R^{k-i}}}^{p-1}\frac{\abss{D^i(u-a)}}{R^{k-i}}\dif x + CR \int_{B_R} \brangle{\B(u-a)}^{p-1}\abss{\B (u-a)} \dif x.
        \end{multline} 
\end{proposition}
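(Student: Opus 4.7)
The approach is the classical Caccioppolli argument, adapted to non-autonomous, strongly $\B$-quasiconvex integrands. We set $v=u-a$ and introduce the shifted integrand
\[
g(x,z):=f_{\B a}(x,z)=f(x,z+\B a)-f(x,\B a)-\partial_z f(x,\B a)\cdot z,
\]
so that $g(x,0)=0$ and $\partial_z g(x,0)=0$. Minimality of $u$ for $f$ translates into minimality of $v$ for $\int g(x,\B v)\dif x$ modulo the linear perturbation $\int\partial_z f(x,\B a)\cdot \B v\dif x$; the non-constant part of this perturbation will be controlled via the $x$-Lipschitz continuity of $\partial_z f(\cdot,\B a)$ from \ref{it:fcontinuity}, using $|\B a|\leq M$.

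Fix radii $\tau R\leq r<s\leq R$, pick a cut-off $\rho\in C_c^\infty(B_s)$ with $\rho\equiv 1$ on $B_r$ and $\|D^i\rho\|_\infty\leq C(s-r)^{-i}$, and split $v=\phi+\eta$ with $\phi=\rho v\in W^{k,p}_0(B_R)$ and $\eta=(1-\rho)v$ (vanishing on $B_r$). Testing minimality against the competitor $u-\phi=a+\eta$ and rewriting in terms of $g$ yields
\[
\int_{B_R}[g(x,\B v)-g(x,\B\eta)]\dif x\leq \int_{B_R}\bigl[\partial_z f(x_0,\B a)-\partial_z f(x,\B a)\bigr]\cdot \B\phi\dif x,
\]
where $x_0$ is the center of $B_R$ and we have used $\int_{B_R}\B\phi\dif x=0$ (integration by parts, since $\phi$ has zero trace). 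By \ref{it:fcontinuity} the right-hand side is bounded by $CR\int_{B_R}|\B\phi|\dif x$. Splitting the left-hand side over $B_r$, $B_s\setminus B_r$, and $B_R\setminus B_s$, and noting $\B v=\B\phi$ on $B_r$ and $\B v=\B\eta$ on $B_R\setminus B_s$, this reduces to
\[
\int_{B_r}g(x,\B\phi)\dif x\leq \int_{B_s\setminus B_r}|g(x,\B v)-g(x,\B\eta)|\dif x+CR\int_{B_R}|\B\phi|\dif x.
\]

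To lower-bound the left-hand side we freeze the $x$ variable at $x_0$ and invoke the strong $\B$-quasiconvexity bound \eqref{eq:pshiftedqc} for the admissible test $\phi\in W^{k,p}_0(B_R)$, obtaining $\int_{B_R}g(x_0,\B\phi)\dif x\geq c\int_{B_R}|V_p(\B\phi)|^2\dif x$. The $x$-continuity of $g$ follows from the representation $g(x,z)=\int_0^1[\partial_z f(x,tz+\B a)-\partial_z f(x,\B a)]\cdot z\dif t$, in which the only continuously ($x$-dependent) term $f(\cdot,\B a)$ cancels; combined with \ref{it:fcontinuity} this yields $|g(x_0,z)-g(x,z)|\leq CR\mean{z}^{p-1}|z|$ uniformly on $B_R$. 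Together with the growth bound $|g|\leq C|V_p(\cdot)|^2$ from \eqref{eq:pshiftedup}, we arrive at
\[
c\int_{B_R}|V_p(\B\phi)|^2\dif x\leq \int_{B_r}g(x,\B\phi)\dif x+CR\int_{B_R}\mean{\B\phi}^{p-1}|\B\phi|\dif x+C\int_{B_s\setminus B_r}|V_p(\B\phi)|^2\dif x,
\]
while the annular integrand $|g(x,\B v)-g(x,\B\eta)|$ is controlled by \eqref{eq:pshiftedLip} with $\B v-\B\eta=\B\phi$.

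Finally, the product rule gives $\B\phi=\rho\,\B v+\sum_{j=1}^k b_j[D^j\rho,D^{k-j}v]$ and, pointwise, $|\B\phi|\lesssim |\B v|+\sum_{i=0}^{k-1}|D^iv|/(s-r)^{k-i}$, with analogous bounds for $\B\eta$. Substituting these into the displays above and applying Young's inequality (case-split by $p\geq 2$ and $1<p<2$) to split the $CR$-terms and absorb the annular $|V_p(\B\phi)|^2$ on the left, we obtain an inequality of the form
\[
\int_{B_r}|V_p(\B v)|^2\dif x\leq \theta\int_{B_s}|V_p(\B v)|^2\dif x+\Psi(s-r,R,v),
\]
with $\theta\in(0,1)$, where $\Psi$ collects the lower-order $V_p$-terms (with weights $(s-r)^{-(k-i)}$) together with the $R$-scaled $\mean{\cdot}^{p-1}$-terms. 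Lemma~\ref{lem:iteration} applied with $\Phi(r):=\int_{B_r}|V_p(\B v)|^2\dif x$ then produces \eqref{eq:BCaccioppoli} between $B_{\tau R}$ and $B_R$. The main technical obstacles are the merely continuous (not quantitatively Lipschitz) $x$-dependence of $f(\cdot,0)$, which is circumvented by the above integral representation of $g$ so that $f(\cdot,0)$ never enters, and the fact that the product rule for the higher-order operator $\B$ forces commutator terms at every intermediate order $0\leq i\leq k-1$, which is why all such contributions appear on the right-hand side of \eqref{eq:BCaccioppoli}.
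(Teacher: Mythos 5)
Your proposal is correct and follows essentially the same approach as the paper's proof: shifted integrand, cut-off decomposition $v=\phi+\eta$, minimality tested against $u-\phi$, strong quasiconvexity at the frozen point $x_0$, the $x$-Lipschitz estimate on $\partial_z f$ from \textnormal{\ref{it:fcontinuity}}, the product-rule decomposition of $\B(\rho v)$, and a hole-filling argument plus Lemma~\ref{lem:iteration}. The only substantive difference is bookkeeping: the paper works throughout with the $x_0$-frozen shifted integrand $\tilde f(z)=f_{\B a}(x_0,z)$, expanding $\int\tilde f(\B\varphi)$ into four terms $I,II,III,IV$ and estimating $II+III$ by a fundamental-theorem-of-calculus representation of $\partial_z f$ evaluated at the moving argument $\B u - t\B\varphi$, which gives a factor $\langle \B\tilde u\rangle^{p-1}+\langle \B\varphi\rangle^{p-1}$. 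You instead keep the $x$-dependent shifted integrand $g(x,z)=f_{\B a}(x,z)$ so that minimality becomes a clean inequality $\int[g(x,\B v)-g(x,\B\eta)]\leq\int[\partial_z f(x_0,\B a)-\partial_z f(x,\B a)]\cdot\B\phi$ (using $\int\B\phi=0$), and you incur the $x$-dependence in two places: the linear perturbation with fixed argument $\B a$ (producing a bare $CR\int|\B\phi|$ term) and the re-freezing of $x$ in $g$ before invoking quasiconvexity (producing the $CR\int\langle\B\phi\rangle^{p-1}|\B\phi|$ term). These merge into the same right-hand side as the paper's, and both observations that $f(\cdot,\B a)$ itself never needs to be estimated in $x$ are the same cancellation seen from two angles. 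So the routes are equivalent; yours is arguably slightly cleaner to state, the paper's slightly more explicit in the term-by-term decomposition.

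One minor imprecision worth flagging: you invoke \eqref{eq:pshiftedqc} "for the admissible test $\phi\in W^{k,p}_0(B_R)$", but under the stated hypothesis $u\in W^{k-1,p}$ with $\B u\in L^p$, the cut-off $\phi=\rho v$ need not lie in $W^{k,p}_0$; only $\B\phi\in L^p$ is guaranteed by the product rule. This is handled (as the paper also does implicitly) by mollifying $\phi$ — since $\spt\phi\Subset B_s$ and $\B\phi\in L^p$, the mollifications $\phi_\epsilon\in C_c^\infty(B_s)$ satisfy $\B\phi_\epsilon\to\B\phi$ in $L^p$, so \eqref{eq:pshiftedqc} passes to the limit by the Lipschitz bound \eqref{eq:fLip}. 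Worth a sentence, but it is not a gap in the argument.
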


\begin{proof}
     Fix a ball $B_R \Subset \Omega$ and $r,s>0$ such that $\frac{R}{2}<r<s<R$. Take a cut off function $\rho \in C_c^{\infty}(B_s)$ with \[ 0\leq \rho \leq 1,\quad \rho \equiv 1 \quad \mbox{in }B_r, \quad\abss{D^i\rho}\leq c_i(s-r)^{-i}, i=1,\dots,k.\] Suppose that the map $a\col \Omega \to V$ is as in the assumption.
    \par Then define $\tilde{f}(z):= f(x_0,\B a +z)-f(x_0,\B a)- f_z(x_0,\B a)\cdot z$, $\tilde{u}:=u-a$, $\varphi:= \rho \tilde{u}$, and $\psi := (1-\rho)\tilde{u} = \tilde{u}-\varphi$. From the strong quasiconvexity of $f$ and the estimate \eqref{eq:pshiftedqc}, we have
    \begin{align*}
        \int_{B_r} \abss{V_p(\B \tilde{u})}^2\dif x &\leq \int_{B_s} \abss{V_p(\B \varphi)}^2\dif x\\
       & \lesssim_{M,p,\ell}\int_{B_s} (f(x_0,\B(\varphi +a)) -f(x_0,\B a)-f_z(x_0,\B a)\cdot \B \varphi) \dif x\\
        &=\int_{B_s} \tilde{f}(\B \varphi)\dif x =\int_{B_s} \tilde{f}(\B \tilde{u}-\B \psi)\dif x\\
        &=\int_{B_s} \tilde{f}(\B \tilde{u})\dif x + \int_{B_s} (\tilde{f}(\B \tilde{u} -\B \psi)-\tilde{f}(\B \tilde{u}))\dif x.
    \end{align*} The definition of $\tilde{f}$ and the minimality of $u$ then imply
    \begin{align*}
        \int_{B_s} \tilde{f}(\B \tilde{u})\dif x &= \int_{B_s}f(x,\B u)\dif x + \int_{B_s} (\tilde{f}(\B \tilde{u}) -f(x,\B u))\dif x \\
        &\leq \int_{B_s}f(x,\B u-\B \varphi)\dif x + \int_{B_s} (\tilde{f}(\B \tilde{u}) -f(x,\B u))\dif x.
    \end{align*} The first term in the second line is further controlled as follows:
    \begin{align*}
       \int_{B_s} f(x,\B u&-\B \varphi)\dif x = \int_{B_s}f(x,\B \psi +\B a) \dif x\\
        &=\int_{B_s} f(x_0,\B \psi + \B a)\dif x + \int_{B_s} (f(x,\B \psi +\B a)- f(x_0,\B \psi + \B a))\dif x\\
        &=\int_{B_s} (\tilde{f}(\B \psi)+f(x_0,\B a) + f_z(x_0,\B a)\cdot \B \psi)\dif x\\ 
        &+ \int_{B_s} (f(x,\B \psi +\B a)-f(x_0,\B \psi +\B a))\dif x.
    \end{align*} Combining the three estimates above, we obtain 
    \begin{align}
        \int_{B_r} &\abss{V_p(\B \tilde{u})}^2\dif x \leq C\int_{B_s}\tilde{f}(\B \varphi)\dif x \notag\\
        &\leq C \left(\int_{B_s} \tilde{f}(\B \psi)\dif x + \int_{B_s}(f(x,\B \psi + \B a) -f(x_0,\B \psi + \B a))\dif x\right. \label{eq:CaccioppoliSupmid1}\\
        &\left.+\int_{B_s}(f(x_0,\B u) -f(x,\B u))\dif x +\int_{B_s} (\tilde{f}(\B \tilde{u}-\B \psi) -\tilde{f}(\B \tilde{u}))\dif x\right) \notag\\
        &=:C(I +II+III+IV).\notag
    \end{align} 
    By \eqref{eq:pshiftedup} we know 
    \begin{align} \label{eq:CaccioppoliSupmid2}
   \begin{split}
        I& \leq C \int_{B_s\setminus B_r}\abss{V_p(\B \psi)}^2\dif x \\
    &\leq C \int_{B_s\setminus B_r}\abss{V_p(\B \tilde{u})}^2 \dif x+C\sum_{i=1}^{k}\int_{B_s}\abs{V_p\brac{\frac{D^{k-i}\tilde{u})}{(s-r)^{i}}}}^2\dif x. 
    \end{split}
    \end{align}
    The terms $II$ and $III$ can be controlled with the continuity of $\partial_z f$ in $x$ as assumed in \ref{it:fcontinuity}: 
    \begin{align}
    &II +III = -\int_{B_s} \int_0^1(\partial_z f(x,\B u-t\B \varphi) -\partial_z f(x_0,\B u-t\B \varphi))\cdot \B \varphi\dif t \notag\\
    &\leq C\int_{B_s}\int_0^1 \abss{x-x_0}(1+\abss{\B u-t\B \varphi}^{2})^{\frac{p-1}{2}}\abss{\B \varphi}\dif x \label{eq:CaccioppoliSupmid3}\\
    &\leq Cs\int_{B_s} (1+\abss{\B \tilde{u}}^2+\abss{\B \varphi}^2)^{\frac{p-1}{2}}\abss{\B \varphi}\dif x \notag\\
    &\leq Cs\int_{B_s} \brangle{\B \tilde{u}}^{p-1}\abss{\B \tilde{u}} \dif x + Cs\sum_{i=1}^{k}\int_{B_s} \brangle{\frac{D^{k-i}\tilde{u}}{(s-r)^{i}}}^{p-1}\frac{\abss{D^{k-i}\tilde{u}}}{(s-r)^{i}}\dif x. \notag
    \end{align} Notice that the integrand in $IV$ vanishes on $B_r$, and by \eqref{eq:pshiftedLip} we have 
    \begin{align}\label{eq:CaccioppoliSupmid4} \begin{split}
    IV &\leq C\int_{B_s\setminus B_r} (1+\abss{\B\psi}^2 + \abss{\B\tilde{u}}^2)^{\frac{p-2}{2}}(\abss{\B\psi} + \abss{\B\tilde{u}})\abss{\B\psi}\dif x\\
    &\leq C \int_{B_s\setminus B_r} \abss{V_p(\B \tilde{u})}^2 \dif x + C\sum_{i=1}^{k}\int_{B_s}\abs{V_p\brac{\frac{D^{k-i}\tilde{u}}{(s-r)^{i}}}}^2\dif x. 
    \end{split}
    \end{align} Then \eqref{eq:CaccioppoliSupmid1}-\eqref{eq:CaccioppoliSupmid4} together imply 
    \begin{multline*}
        \int_{B_r} \abss{V_p(\B \tilde{u})}^2\dif x \leq C\int_{B_s\setminus B_r}\abss{V_p(\B \tilde{u})}^2 \dif x + C\sum_{i=1}^{k}\int_{B_s}\abs{V_p\brac{\frac{D^{k-i}\tilde{u}}{(s-r)^{i}}}}^2\dif x\\
        + Cs\brac{\int_{B_s} \brangle{\B \tilde{u}}^{p-1}\abss{\B \tilde{u}} \dif x +\sum_{i=1}^{k}\int_{B_s} \brangle{\frac{D^{k-i}\tilde{u}}{(s-r)^{i}}}^{p-1}\frac{\abss{D^{k-i}\tilde{u}}}{(s-r)^{i}}\dif x }, 
    \end{multline*} and we add $C\int_{B_r} \abss{V_p(\B \tilde{u})}^2\dif x$ to both sides to get
    \begin{multline*}
        \int_{B_r} \abss{V_p(\B \tilde{u})}^2\dif x \leq \frac{C}{C+1}\int_{B_s}\abss{V_p(\B \tilde{u})}^2 \dif x + \frac{C}{C+1}\sum_{i=1}^{k}\int_{B_s}\abs{V_p\brac{\frac{D^{k-i}\tilde{u}}{(s-r)^{i}}}}^2\dif x \\
        +\frac{C}{C+1}s\brac{\int_{B_s} \brangle{\B \tilde{u}}^{p-1}\abss{\B \tilde{u}} \dif x +\sum_{i=1}^{k}\int_{B_s} \brangle{\frac{D^{k-i}\tilde{u}}{(s-r)^{i}}}^{p-1}\frac{\abss{D^{k-i}\tilde{u}}}{(s-r)^{i}}\dif x } .
    \end{multline*} The desired inequality \eqref{eq:BCaccioppoli} follows from Lemma \ref{lem:iteration} with 
    \begin{align*}
    \Phi(r) &= \int_{B_r}\abss{V_p(\B\tilde{u})}^2\dif x,\\
    \Psi_i(t) &= \int_{B_R}\abs{V_p\brac{\frac{D^{k-i}\tilde{u}}{(s-r)^{i}}}}^2\dif x + R\int_{B_s} \brangle{\frac{D^{k-i}\tilde{u}}{(s-r)^{i}}}^{p-1}\frac{\abss{D^{k-i}\tilde{u}}}{(s-r)^{i}}\dif x,\\
    B &= CR\int_{B_R}\brangle{\B \tilde{u}}^{p-1}\abss{\B \tilde{u}} \dif x, 
    \end{align*}
    which concludes the proof.
\end{proof}
We have the following improved Caccioppoli-type inequality for good minimizers $u$ of \eqref{eq:P_Omega}, which follows from Proposition~\ref{prop:Caccioppoli} and inequality~\eqref{eq:Korn_ball}. 
\begin{proposition}\label{prop:Caccioppoli_Dk}
    Let an integrand $f$ satisfy \textnormal{\ref{it:fgrowth}, \ref{it:fcontinuity}} and \textnormal{\ref{it:fqcstrong}$'$} and $u \in W^{k-1,p}(\Omega,U)$ be such that $\B u\in L^p(\Omega,V)$ is a generalized minimizer of \eqref{eq:P_Omega} satisfying $\C^{\ast}u=0$. Take a map $a\col \Omega \to U$ with $\B a$ being constant and $\abss{\B a}\leq M$ for some $M>0$. Then for any $\tau \in (0,1)$ there exists a constant $c_{M,\tau}=c(n,\dim V,\B, M,p,L,\ell,\tau)>0$ such that the following estimate holds true for any ball $B(x_0,R)(=:B_R) \Subset \Omega$:
        \begin{multline}\label{eq:BCaccioppoli_Dk}
            \int_{B_{\tau R}}\abss{V_p(D^k(u-a))}^2 \dif x  \leq c_{M,\tau}\left( \sum_{i=0}^{k-1} \int_{B_R} \abs{V_p\left(\frac{D^{i}(u-a)}{R^{k-i}}\right)}^2 \dif x \right.\\ 
            \left. +R\sum_{i=0}^{k-1} \int_{B_R} \brangle{\frac{D^i(u-a)}{R^{k-i}}}^{p-1}\frac{\abss{D^i(u-a)}}{R^{k-i}}\dif x + R \int_{B_R} \brangle{\B(u-a)}^{p-1}\abss{\B (u-a)} \dif x \right).
        \end{multline} 
\end{proposition}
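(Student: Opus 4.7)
The plan is to derive \eqref{eq:BCaccioppoli_Dk} by chaining together the $\B$-level Caccioppolli estimate of Proposition~\ref{prop:Caccioppoli} with the Korn-type inequality \eqref{eq:Korn_ball}. The enabling observation is that for the natural (polynomial) choice of $a$ used in the applications in Sections~\ref{sec:higher_int} and~\ref{sec:partialreg} -- namely $a$ a polynomial of degree at most $k$ with $D^k a$ determined by the prescribed constant value of $\B a$ via the linear map $T$ -- one has $\C^{\ast} a = 0$ automatically, since $\C^{\ast}$ is a differential operator of order $l>k$ and therefore annihilates polynomials of degree $\leq k$. Combined with the standing hypothesis $\C^{\ast} u = 0$, this gives $\C^{\ast}(u-a) = 0$ in $\Omega$, so the clean $V_p$-form of the second inequality in \eqref{eq:Korn_ball} applies to $v := u-a$ with no negative Sobolev correction term.

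With this in hand, introduce an intermediate radius $\tau_1 := (1+\tau)/2 \in (\tau,1)$ and apply the second inequality of \eqref{eq:Korn_ball} to $v = u-a$ on the pair of balls $B_{\tau R} \subset B_{\tau_1 R}$. After absorbing the $\tau$- and $\tau_1$-dependent geometric factors into the constant (note that rescaling $R \mapsto \tau_1 R$ multiplies the lower-order arguments by $\tau_1^{-(k-i)} \geq 1$, and Lemma~\ref{lem:Vest}\ref{it:Vmulti} then contributes only a $\tau_1$-dependent factor), one obtains
\begin{align*}
\int_{B_{\tau R}} \abss{V_p(D^k(u-a))}^2 \dif x \leq C_{\tau} \biggl(\int_{B_{\tau_1 R}}\abss{V_p(\B(u-a))}^2 \dif x + \sum_{i=0}^{k-1}\int_{B_{\tau_1 R}}\abs{V_p\brac{\tfrac{D^i(u-a)}{R^{k-i}}}}^2 \dif x\biggr).
\end{align*}

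Next, apply Proposition~\ref{prop:Caccioppoli} with parameter $\tau_1$ in place of $\tau$ to bound the first integral on the right by exactly the right-hand side of \eqref{eq:BCaccioppoli_Dk}. The residual lower-order sum produced by the Korn step is subsumed by the corresponding sum over $B_R$ via the inclusion $B_{\tau_1 R}\subset B_R$. Combining the two inequalities yields \eqref{eq:BCaccioppoli_Dk}, with $c_{M,\tau}$ collecting the dependencies of the Caccioppolli and Korn constants on $n$, $\dim V$, $\B$, $M$, $p$, $L$, $\ell$, and $\tau$. The only non-mechanical point is verifying $\C^{\ast}(u-a)=0$ for the admissible choice of $a$; once that is secured, the argument is a routine two-step composition with no genuine obstacle, and in particular it is the upgrade in the Korn inequality -- from the general form \eqref{eq:Korn} (which would leave an uncontrolled $\dot W^{-q,p}$-term) to the clean $V_p$-bound valid under $\C^{\ast}v = 0$ -- that makes this improvement possible.
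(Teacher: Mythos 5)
Your proposal is correct and takes exactly the route the paper intends: the paper simply asserts that Proposition~\ref{prop:Caccioppoli_Dk} ``follows from Proposition~\ref{prop:Caccioppoli} and inequality~\eqref{eq:Korn_ball}'', and your chaining — apply the $V_p$-form of \eqref{eq:Korn_ball} to $v=u-a$ on $B_{\tau R}\subset B_{\tau_1 R}$ (legitimate because $\C^{\ast}(u-a)=0$, since $a$ is a polynomial of degree at most $k<l$), then bound $\int_{B_{\tau_1 R}}|V_p(\B(u-a))|^2$ by Proposition~\ref{prop:Caccioppoli} with parameter $\tau_1$, and absorb the residual Korn lower-order sum via Lemma~\ref{lem:Vest}\ref{it:Vmulti} — fills in that assertion with the right details. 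You also correctly flag the one non-mechanical point, namely that $\C^{\ast}a=0$ is what makes the clean $V_p$-Korn inequality applicable; the paper implicitly restricts to the polynomial comparison maps $a$ used in Sections~\ref{sec:higher_int}--\ref{sec:partialreg}, exactly as you note.
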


\begin{remo}\label{rmk:A_Caccioppoli}
    Notice that Proposition \textnormal{\ref{prop:Caccioppoli}} and \textnormal{\ref{prop:Caccioppoli_Dk}} also hold true for (good) minimizers of \eqref{eq:P_Omega_Afree}, which follows from Remark \textnormal{\ref{rmk:reduction}}. The extra function $\phi$ can be considered as part of the $x$-dependence. 
\end{remo}

\subsection{Higher integrability for minimizers}

\par The Caccioppoli-type inequality \eqref{eq:BCaccioppoli_Dk} implies  higher integrability of minimizers of our variational problems \eqref{eq:P_Omega} and \eqref{eq:P_Omega_Afree}. The proof in the case $1<p<2$ requires the following Poincar\'e inequality with respect to $V$. It will also be combined with the Caccioppoli inequality \eqref{eq:BCaccioppoli_Dk} in the excess decay estimate in Section \ref{sec:partialreg} and can be shown by modifying Theorem 2.4 in \cite{CFM98}.
    \begin{theorem}\label{thm:Vpoincare}
        If $1<p<2$, there exist $\alpha \in (\frac{2}{p}, 2)$ and $\sigma >0$ such that any map $u \in W^{1,p}(B_{3R},U)$ satisfies
        \begin{equation*}
            \sum_{i=0}^{k-1}\brac{\fint_{B_R} \abs{V_p\brac{\frac{D^iu-(D^i u)_R}{R^{k-i}}}}^{2(1+\sigma)}\dif x}^{1/(2(1+\sigma))} \leq C \brac{\fint_{B_{3R}} \abss{V_p (D^k u)}^{\alpha}\dif x}^{1/\alpha}
        \end{equation*} with $C=C(n,p,k,\dim U)>0$.
    \end{theorem}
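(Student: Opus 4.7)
The plan is to extend Theorem~2.4 of \cite{CFM98} (the base case $k=1$) to higher orders by induction on $k$. The three main ingredients I will use are: (i) the classical Sobolev--Poincar\'e inequality at an exponent $r$ slightly below $p$, which produces a Sobolev gain $r^{\ast} = nr/(n-r) > p$; (ii) the two-regime equivalence $\abss{V_p(z)}^2 \sim_p \min\{\abss{z}^2, \abss{z}^p\}$ from Lemma~\ref{lem:Vest}\ref{it:Vest}, which controls $V$ linearly on the set where $\abss{z}\lesssim 1$ and as $\abss{z}^{p/2}$ where $\abss{z}\gtrsim 1$; and (iii) the scaling inequality $\abss{V_p(tz)}^2 \leq \max\{t^2, t^p\}\abss{V_p(z)}^2$ from Lemma~\ref{lem:Vest}\ref{it:Vmulti}, which is needed to accommodate the factors $R^{k-i}$ appearing in the denominator.

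For $k=1$, the estimate is precisely \cite[Theorem~2.4]{CFM98}. Its argument decomposes the integration domain into a ``small gradient'' region, where $V$ is treated linearly, and a ``large gradient'' region, where $V$ behaves like $\abss{\,\cdot\,}^{p/2}$; it then combines the Sobolev--Poincar\'e gain with H\"older's inequality. The constraint $\alpha > 2/p$ arises exactly from closing H\"older in the large-gradient regime, while $\sigma > 0$ is produced by the Sobolev gain $r^{\ast} > p$.

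For the inductive step, I assume the estimate at level $k-1$ and apply it to $Du \in W^{k-1,p}(B_{3R})$. After reindexing $j \mapsto i = j+1$ (so that $R^{(k-1)-j} = R^{k-i}$ matches the scaling), this directly produces the estimate for all summands with $1 \leq i \leq k-1$. The remaining $i=0$ summand is handled by iterating the base case $k$ times: at each step $j=0,\ldots,k-1$, one bounds $V((D^j u - (D^j u)_R)/R)$ by $V(D^{j+1}u)$ in a weaker $L^q$-space; combining all $k$ steps and tracking the accumulated factors of $R^{-1}$ via the scaling inequality in (iii) yields the required control of $V((u - (u)_R)/R^k)$ by $V(D^k u)$.

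The principal technical difficulty lies in the exponent bookkeeping across the iteration. Each application of the base case erodes the integrability gain $\sigma$ and narrows the admissible interval for $\alpha$; maintaining simultaneously $\alpha \in (2/p, 2)$ and $\sigma > 0$ after $k$ iterations requires starting at parameters close to the extremes of the admissible intervals and verifying, via H\"older's inequality and the Sobolev embedding chain, that the resulting system of constraints remains consistent. Taking the worse of the parameter pairs produced by the inductive hypothesis and by the iterated base case then yields a single admissible pair $(\alpha, \sigma)$ realizing the claimed estimate.
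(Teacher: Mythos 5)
Your overall strategy — iterate Theorem 2.4 of \cite{CFM98}, using the two-regime description of $V_p$ from Lemma~\ref{lem:Vest}\ref{it:Vest} together with the scaling property \ref{it:Vmulti} — matches the paper's one-line indication that the result ``can be shown by modifying Theorem~2.4 in \cite{CFM98}.'' Two points, however, deserve attention.

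First, your concern about exponent erosion across the induction is misplaced. The base case yields a pair $(\alpha_0,\sigma_0)$ with $\alpha_0<2<2(1+\sigma_0)$, so the chaining step is a downgrade by H\"older's inequality on a bounded domain, $\normm{V(D^{j+1}u)}_{L^{\alpha_0}(B)} \lesssim \normm{V(D^{j+1}u)}_{L^{2(1+\sigma_0)}(B)}$, which is lossless in the exponents: the same $(\alpha_0,\sigma_0)$ serves at every order $k$. Only the multiplicative constant and the ratio of inner to outer radius change, both controlled by $n,p,k,N$ once one uses a chain of intermediate radii between $R$ and $3R$.

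Second, and more seriously, the iteration you describe for the $i=0$ summand does not close. Applying the base case to $w=D^j u$ controls $V\bigl((D^ju-(D^ju)_R)/R\bigr)$ by $\normm{V(D^{j+1}u)}_{L^{\alpha}}$, but the next step only controls $V\bigl((D^{j+1}u-(D^{j+1}u)_R)/R\bigr)$; the mean $(D^{j+1}u)_R$ is not governed by $D^k u$, and in fact the inequality as literally stated fails: take $k=2$ and $u$ affine with $Du\neq 0$, so that $D^2u\equiv 0$ while $\fint_{B_R}\abss{V\bigl((u-(u)_R)/R^2\bigr)}^{2(1+\sigma)}\dif x$ blows up as $R\to 0$. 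The fix — and the reading under which the result is actually invoked in the proofs of Proposition~\ref{prop:higherint} and Proposition~\ref{prop:EDESub} — is to first subtract the degree-$(k-1)$ Taylor fit $a$ of $u$ determined by $(D^ja)_R=(D^ju)_R$ for $0\le j\le k-1$, so that $(D^j(u-a))_R=0$ for all $j<k$ and $D^k(u-a)=D^k u$. With this normalization every step of the iteration has zero mean and the chain closes; equivalently, the theorem should be understood as an estimate for $D^i(u-a)$ rather than for $D^iu-(D^iu)_R$. Your write-up should make this normalization explicit, after which the argument goes through.
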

\par The higher integrability result states as follows:

\begin{proposition}\label{prop:higherint}
    Let an integrand $f$ satisfy \textnormal{\ref{it:fgrowth}, \ref{it:fcontinuity}} and \textnormal{\ref{it:f_B-qcstrong}$'$}, and that $u \in W^{k,p}(\Omega,U)$ be a generalized minimizer of \eqref{eq:P_Omega} satisfying $\C^{\ast}u=0$. Then there exists $\sigma_0 >0$ depending on $n,p$ and $c_{0,1/2}$ in \eqref{eq:BCaccioppoli_Dk} such that $D^k u \in L^{p+\sigma}_{\locc}(\Omega,(U^n)^k)$ for any $\sigma \in (0,\sigma_0)$.
\end{proposition}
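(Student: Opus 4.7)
The strategy is standard: combine the improved Caccioppoli inequality of Proposition~\ref{prop:Caccioppoli_Dk} with Sobolev--Poincaré-type inequalities to derive a reverse Hölder inequality for $|D^k u|$, and then apply Gehring's lemma. I would fix a ball $B_R$ with $B_{3R} \Subset \Omega$ of radius $R \leq R_0$, where $R_0$ will be chosen small enough in terms of $n$, $p$, $\B$, and the constant $c_{0,1/2}$ from \eqref{eq:BCaccioppoli_Dk}. Take $a$ to be the averaged Taylor polynomial of $u$ on $B_R$ of degree at most $k-1$, so that $\B a = 0$, $D^k a = 0$, and $\fint_{B_R} D^i(u-a)\dif x = 0$ for $i = 0, \dots, k-1$; then apply Proposition~\ref{prop:Caccioppoli_Dk} with $\tau = 1/2$ and $M = 0$ to this choice of $a$.

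For $p \geq 2$, I would use the equivalence $|V_p(z)|^2 \sim |z|^2 + |z|^p$ together with iterated Sobolev--Poincaré inequalities (with vanishing averages of $D^i(u-a)$) to bound the main RHS term by
\begin{equation*}
\sum_{i=0}^{k-1} \fint_{B_R} \left|V_p\left(\frac{D^i(u-a)}{R^{k-i}}\right)\right|^2 \dif x \leq C\left(\fint_{B_R}|D^k u|^q \dif x\right)^{p/q} + C,
\end{equation*}
where $q = \max\{1, pn/(n+pk)\}\in[1,p)$ is chosen so that Sobolev embedding holds at all the relevant orders. For $1 < p < 2$, I would instead apply Theorem~\ref{thm:Vpoincare} to $u - a$: since $D^k(u-a) = D^k u$ and $(D^i(u-a))_R = 0$ for $i<k$, this yields
\begin{equation*}
\sum_{i=0}^{k-1}\fint_{B_R}\left|V_p\left(\frac{D^i(u-a)}{R^{k-i}}\right)\right|^2 \dif x \leq C\left(\fint_{B_{3R}}|V_p(D^k u)|^\alpha \dif x\right)^{2/\alpha}
\end{equation*}
for some $\alpha \in (2/p, 2)$. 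In both cases this is a reverse Hölder estimate for $g:=|V_p(D^k u)|^2$ with RHS exponent $s:=q'/2$ strictly less than $1$, where $q'=2q/p$ or $q'=\alpha$ respectively.

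The correction terms in \eqref{eq:BCaccioppoli_Dk} proportional to $R$ would be handled via Young's inequality $\langle z\rangle^{p-1}|z| \leq C(1+|z|^p)$ and the pointwise bound $|\B u| \leq C|D^k u|$: up to a further application of Sobolev--Poincaré, they are dominated by $CR + CR\fint_{B_R}|V_p(D^k u)|^2$. Choosing $R_0$ small so that $CR_0$ is below the Gehring absorption threshold, we arrive at
\begin{equation*}
\fint_{B_{R/2}}|V_p(D^k u)|^2 \dif x \leq C\left(\fint_{B_{3R}}|V_p(D^k u)|^{q'} \dif x\right)^{2/q'} + CR\fint_{B_R}|V_p(D^k u)|^2\dif x + C.
\end{equation*}
A version of Gehring's lemma allowing a small perturbation term on the right-hand side then yields $|V_p(D^k u)|^2 \in L^{1+\delta}_{\locc}(\Omega)$ for some $\delta>0$ depending only on $n$, $p$ and $c_{0,1/2}$. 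Combining with the elementary bound $|z|^p \leq C(1+|V_p(z)|^2)$ valid for all $p>1$, we conclude $|D^k u|^{p(1+\delta)} \in L^1_{\locc}(\Omega)$, i.e., $D^k u \in L^{p+\sigma_0}_{\locc}(\Omega)$ with $\sigma_0 := p\delta$.

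The hard part will be absorbing the term $R\fint_{B_R}\langle\B u\rangle^{p-1}|\B u|$: since $\B u$ is not subtracted by the Taylor polynomial $a$, Sobolev--Poincaré gives no improvement to a sub-$p$ power, and the term must be treated as a small perturbation of $\fint_{B_R}|V_p(D^k u)|^2$ on the larger ball, which forces the use of a Gehring variant with a small coefficient in front of the top-order average on the RHS and the restriction to balls of radius below a threshold $R_0$. A secondary subtlety is the unified treatment of $p\geq 2$ and $1<p<2$: the subquadratic case crucially requires the $V_p$-Poincaré inequality of Theorem~\ref{thm:Vpoincare}, which itself builds in the necessary higher integrability gain of the lower order terms that is free in the superquadratic range.
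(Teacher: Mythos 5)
Your proposal matches the paper's own proof almost step for step: both choose the degree-$(k-1)$ averaged Taylor polynomial $a$ (so that $\B a=0$, hence $M=0$ in Proposition~\ref{prop:Caccioppoli_Dk}), apply that Caccioppoli inequality with $\tau=1/2$, bound the lower-order terms via iterated Sobolev--Poincar\'e when $p\geq 2$ (the paper uses $p_*=np/(n+p)$; your $q=\max\{1,pn/(n+pk)\}$ is an equally valid sub-$p$ exponent) and via Theorem~\ref{thm:Vpoincare} when $1<p<2$, and then invoke the generalized Gehring lemma of \cite{Gia83}~Ch.~V, Prop.~1.1 after shrinking $R$ so that the coefficient $CR$ multiplying $\fint_{B_R}|\B u|^p$ (resp.\ $\fint_{B_R}|V_p(D^ku)|^2$) falls below the absorption threshold. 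Your concluding remark about why the $\B u$ term resists Sobolev--Poincar\'e and therefore forces the perturbed Gehring variant with a radius cutoff is exactly the point the paper handles, albeit more tersely, by its choice of $R\ll1$ and its citation of the generalized Gehring lemma.
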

\begin{proof}
    \par It is sufficient to derive a weak reverse H\"{o}lder inequality, and then the desired integrability result follows from the generalized Gehring lemma (see, for example, $\S$V.1 in \cite{Gia83}).
    \par In the case where $p\geq 2$, we have \[ \brangle{z}^{p-1}\abss{z} \lesssim \abss{z}+\abss{z}^{p} \lesssim 1+\abss{z}^{p} \] for any vector $z$. In addition, the estimate \ref{it:Vest} in Lemma \ref{lem:Vest} implies that \[ \abss{z}^p \lesssim \abss{V_p(z)}^2 \lesssim 1+\abss{z}^p.\] Fix an arbitrary ball $B_R \Subset \Omega$ with $R\leq 1$ and take a $(k-1)$-polynomial $a$ such that \[(D^i a)_{B_R} =(D^i u)_{B_R}, \quad i=0,\dots, k-1. \] Then from \eqref{eq:BCaccioppoli_Dk} with $\tau = \frac{1}{2}$ we obtain 
    \begin{align*}
        \fint_{B_{R/2}} \abss{D^k u}^p \dif x &\leq  C\brac{\sum_{i=0}^{k-1} \fint_{B_R} \frac{\abss{D^i (u-a)}^p}{R^{p(k-i)}}\dif x  + R\fint_{B_R} \abss{\B (u-a)}^{p}\dif x +1}\\
        &\leq C\brac{ \brac{\fint_{B_R} \abss{D ^ku}^{p_{\ast}}\dif x}^{\frac{p}{p_{\ast}}} + R\fint_{B_R} \abss{\B (u-a)}^{p}\dif x+1},
    \end{align*} where the second line follows from the Sobolev-Poincar\'{e} inequality and $p_{\ast} = \frac{np}{n+p}$. Take $R \ll 1$ such that $CR <1$. Then this weak reverse H\"{o}lder inequality together with the generalized Gehring lemma (\cite{Gia83}, Chapter V, Proposition 1.1) implies the local higher integrability of $D^k u$.
    \par When $1<p<2$, we show a weak reverse H\"{o}lder inequality with respect to $V_p(z)$ instead. Notice that \[\brangle{z}^{p-1}\abss{z} \lesssim \abss{z}+\abss{z}^p \lesssim 1+\abss{V_p(z)}^2. \] Fix an arbitrary ball $B_R$ with $B_{3R} \Subset \Omega$ and $R\leq 1$, take a $(k-1)$-polynomial $a$ as above and let $\tau = \frac{1}{2}$. Then \eqref{eq:BCaccioppoli_Dk} implies
        \begin{align*}
             \fint_{B_{R/2}}\abss{V_p&(D^k u)}^2 \dif x \leq C\left(\sum_{i=0}^{k-1} \fint_{B_R} \abs{V_p\left(\frac{D^{i}(u-a)}{R^{k-i}}\right)}^2 \dif x \right. \\
          &\left.+\sum_{i=0}^{k-1}\fint_{B_R} \brac{1+\abs{V_p\left(\frac{D^i(u-a)}{R^{k-i}} \right)}^2}\dif x +R\fint_{B_R}(1+\abss{V_p(\B (u-a))}^2)\dif x \right)\\
            &\left.\leq C \brac{\brac{\fint_{B_{3R}} \abss{V_p(D^ku)}^{\beta} \dif x}^{\frac{2}{\beta}} +R\fint_{B_R} \abss{V_p(D^k u)}^2\dif x +1},\right.
        \end{align*} where the third line follows from Theorem \ref{thm:Vpoincare} and $\beta \in (\frac{2}{p},2)$. Taking $R\ll 1$ such that $CR<1$ and applying the generalized Gehring lemma again, we obtain the local higher integrability of $V_p(D^k u)$, and thus of $D^k u$.
\end{proof}

\section{Partial regularity}\label{sec:partialreg}
    \par To prove Theorem \ref{thm:main}, it is enough to prove the corresponding excess decay estimate (Proposition \ref{prop:EDE}). These imply the claimed partial H\"{o}lder regularity using a routine argument, which can be found in \cite{Gia83}, Chapter IV. We consider the super-quadratic case $p\geq 2$ and sub-quadratic one $p<2$ separately, and base our proofs on \cite{Evans86} and \cite{CFM98}. 
    \begin{proof}
        [Proof of Theorem~\ref{thm:main}]
        Follows from Propositions \ref{prop:EDE} and a standard argument (see, for instance, Section 9.5 in \cite{Giusti03}), Proposition~\ref{prop:higherint} (for higher integrability), and Proposition~\ref{prop:reduction} (to verify that minimizers can be assumed $\C^*$-free).
    \end{proof}
\begin{proof}
    [Proof of Theorem~\ref{thm:main_A-free} assuming Theorem~\ref{thm:main}]
    Follows from Remarks~\ref{rmk:reduction} and \ref{rmk:A_Caccioppoli}.
\end{proof}
To finish the paper, it remains  to prove Proposition \ref{prop:EDE}.
\par Let $\alpha \in (0,1)$ be a given exponent. We consider the following excess functional:
        \begin{equation}\label{eq:excess}
            \EE(u,x_0,R):= R^{2\alpha}+\fint_{B_R(x_0)}\abss{V_p(D^k u- (D^k u)_{x_0,R})}^2 \dif x.
        \end{equation}
\begin{proposition}\label{prop:EDE}
        Suppose that the integrand $f$ satisfies \textnormal{\ref{it:fgrowth}, \ref{it:fcontinuity}} and \textnormal{\ref{it:f_B-qcstrong}$'$} with $1<p <\infty$, and that $u \in W^{k,p}(\Omega,U)$ is such that $\B u$ a generalized minimizer of \eqref{eq:P_Omega} with $\C^{\ast} u=0$. For any given $M>0$, $\alpha \in (0,1)$ and $\tau \in (0,\frac{1}{4})$, there exists $\varepsilon>0$ such that the following holds true: for any ball $B(\bar{x},R) (=:B_R) \Subset \Omega$,  the conditions 
        \begin{equation*}
            \abss{(D^k u)_{R}}< M, \quad \EE(u,\bar{x},R)<\varepsilon
        \end{equation*} implies
        \begin{equation*}
            \EE(\tau R)\leq C_0 \tau^{2\alpha} \EE(R)
        \end{equation*} for some $C_0=C_0(n,p,\B,\dim V,L, \ell, M)>0$, where $\EE$ is defined as in \eqref{eq:excess}.
    \end{proposition}

    \begin{proof}
        The proposition is proved by contradiction and the proof is divided into four steps.
    ~\paragraph*{\bf Step 1. Contradiction assumption.} Suppose that the proposition is not true for a certain $\tau \in (0,\frac{1}{4})$. Then there exists a sequence of balls $\{B(x_m,r_m)\}_{m\in \N}$ with $B(x_m,r_m) \Subset \Omega$ such that 
    \begin{align}
        &\abss{(D^k u)_{x_m,r_m}}<M, \quad \EE(u,x_m,r_m)=\lambda_m^2 \overset{m \to \infty}{\longrightarrow} 0, \label{eq:HAcontra2}\\
        &\EE(u,x_m,\tau r_m) >C_0 \tau^{2\alpha}\EE(u,x_m, r_m), \label{eq:HAcontra1}
    \end{align} where the constant $C_0$ is to be determined. The assumptions imply $r_m \to 0$. 
    \par Define the following map on $B_1=B(0,1)$ for each $m\in \N$: 
        \begin{equation*}
            v_m(y):= \frac{1}{\lambda_m r^k_m} (u(x_m+r_m y) -r^k_m a_m(y)),
        \end{equation*} where the map $a_m$ on $B_1$ is a $k$-polynomial such that 
        \begin{equation*}
            \int_{B(x_m,r_m)} D^i(u-a_m)\dif x =0, \quad 0\leq i \leq k-1, \quad D^k a_m = (D^k u)_{x_m,r_m}:= A_m.
        \end{equation*} Notice that $\C^{\ast}u =0$ implies $\C^{\ast}a_m=0$. Then we have, by direct calculation and \eqref{eq:HAcontra1},
        \begin{align}
            &D^k v_m = \frac{1}{\lambda_m}(D^k u(x_m+r_my)-A_m), \quad \C^{\ast}v_m =0, \label{eq:HAblowup.03}\\
            &(D^i v_m)_{1}=0, \quad 0\leq i \leq k,\notag\\
            & r_m^{2\alpha}+\fint_{B_1} \abss{V(\lambda_m D^k v_m)}^2 \dif y = \EE(u,x_m,r_m)=\lambda_m^2, \label{eq:HAblowup.01}\\
            & (\tau r_m)^{2\alpha}+\fint_{B_{\tau}} \abss{V(\lambda_m (D^k v_m-(D^k v_m)_{\tau}))}^2 \dif x  >C_0 \lambda_m^2\tau^{2\alpha}. \label{eq:HAblowup.02}
        \end{align}
    \par  In addition, \eqref{eq:HAblowup.01} implies $\fint_{B_1}\abss{D^k v_m}^{p_0} \dif y \leq 1$, where $p_0 := \min\{2,p\}$. Using the choice of $a_m$ and Poincar\'e's inequality, we know \[ \normm{v_m}_{W^{k,p_0}(B_1)} \leq C \normm{D^k v_m}_{L^{p_0}(B_1)} \leq C.\]
    \paragraph*{\bf Step 2. Blow up.} Now we know that $\{v_m\}$ is uniformly bounded in $W^{k,p_0}(B_{1},U)$ with $\C^{\ast} v_m=0$, then up to a subsequence there exists a map $v\in W^{k,p_0}(B_{1})$ such that 
        \begin{equation*}
            \left\{\begin{aligned}
            &v_m \weakto v& &\mbox{in }W^{k,p_0}(B_1),\\
            &\C^{\ast}v =0& &\mbox{in }\mathscr{D}^{\prime}(B_1).
            \end{aligned}\right.
        \end{equation*} In particular, we have the strong convergence $v_m \to v$ in $W^{k-1,p_0}(B_1,U)$. Notice that $\{A_m\}$ is a bounded sequence, so without loss of generality, we can assume that $A_m \to A_0$ for some $A_0 \in \R^{N^k\times n}$ with $\abss{A_0}\leq M$. Similarly, we can assume $x_m \to x_0 \in \Omega$. Then it is easy to see 
        \begin{align*}
            &\abss{(\B v_m)_1} = \abss{(T(D^k v_m))_1} = \abss{T((D^k v_m)_1) }\leq C(\B, M),\\
            &\mbox{and}\quad  \abss{(\B v)_1} = \lim_{m\to \infty}\abss{(\B v_m)_1}  \leq C(\B, M), 
        \end{align*} where $T \col \R^{N^k\times n} \to V$ is a linear operator with $T(D^k u)=\B u$.
    ~\paragraph*{\bf Step 3. The harmonicity of $v$.} This step is to show that $v$ solves the following system on $B_1$ in a weak sense: \[ \B^{\ast}( \partial_z^2 f(x_0,T(A_0))\B v) =0. \] Given any suitable test map $\psi \in C_c^{\infty}(B_1,U)$, we test the Euler-Lagrange system satisfied by $u$ on $B(x_m,r_m)$ with $\psi_m(\cdot):= \psi((\cdot-x_m)/r_m)$:
        \begin{align}
            0 = \fint_{B(x_m,r_m)} \partial_z f(x,\B u)\cdot \B \psi_m \dif x = \fint_{B_1} \partial_z f(x_m+r_my,\lambda_m \B v_m +T(A_m))\cdot \B \psi \dif y \notag\\
            = \fint_{B_1}(\partial_z f(x_m+r_my,\lambda_m \B v_m +T(A_m)) - \partial_z f(x_m+r_m y,T(A_m)))\cdot \B \psi \dif y \hspace{.5cm} \label{eq:SupHAsys1}\\
              +\fint_{B_1}(\partial_z f(x_m+r_my, T(A_m)) - \partial_z f(x_m,T(A_m)))\cdot \B \psi \dif y := \fint_{B_1} I\dif y +\fint_{B_1} II\dif y.\notag
        \end{align} Notice that \[ \fint_{B_1} II\dif y \leq r_m L(1+\abss{T(A_m)}^2)^{\frac{p-1}{2}}\fint_{B_1}\abss{\B \psi}\dif y \leq C(p,L,M)r_m \fint_{B_1}\abss{\B \psi}\dif y \] by the Lipchitz continuity of $\partial_z f(\cdot,z)$ as in \ref{it:fcontinuity}. Since $r_m \leq \lambda_m^{\frac{1}{\alpha}}$, we have 
        \begin{equation}\label{eq:SupHAsys2}
            \lim_{m\to \infty}\frac{1}{\lambda_m} \fint_{B_1} II\dif y =0.
        \end{equation}
    \par To analyze the other term, we split $B_1$ into 
        \begin{align*}
            & E_m^+ := \{y\in B_1 \col \lambda_m \abss{\B v_m} \geq 1\},\\
            & E_m^- := \{y\in B_1 \col \lambda_m \abss{\B v_m} < 1\}.
        \end{align*} The measure of $E_m^+$ is controlled with \eqref{eq:HAblowup.01} and \ref{it:Vest}:
        \[\abss{E_m^+} \leq C\int_{E_m^+} \abss{V(\lambda_m \B v_m)}^2 \dif y \leq C\int_{E_m^+} \abss{V(\lambda_m D^k v_m)}^2 \dif y \leq C\lambda_m^2.\] It also indicates that $\abss{E_m^+} \to 0$ as $m\to \infty$. Thus, with the growth rate of $\partial_z f$ we have
        \begin{align}
            \frac{1}{\lambda_m} \int_{E_m^+}I \dif y &\leq \frac{C}{\lambda_m} \int_{E_m^+} (1+\abss{\lambda_m \B v_m}^{p-1})\dif y \notag \\
            &\leq \frac{C}{\lambda_m}\brac{\abss{E_m^+} +\brac{\int_{E_m^+} \abss{V(\lambda_m D^k v_m)}^2 \dif y}^{\frac{p-1}{p}}\abss{E_m^+}^{\frac{1}{p}}}\label{eq:SupHAsys3}\\
            &\overset{\eqref{eq:HAblowup.01}}{\leq} C \lambda_m \overset{m\to \infty}{\longrightarrow} 0. \notag
        \end{align} Notice that $\lambda_m \B v_m \to 0$ a.e. in $B_1$ up to a subsequence and $\indi_{E_m^-} \to 1$ in $L^2(B_1)$. Then considering the boundedness of the arguments and the regularity of $f$ in \ref{it:fcontinuity}, we use H\"{o}lder's inequality and the dominated convergence theorem to obtain
        \begin{multline}\label{eq:SupHAsys4}
            \frac{1}{\lambda_m}\int_{E_m^-} I \dif y = \int_{E_m^-} \int_0^1 \partial_z^2 f(x_m+r_m y,T(A_m) +t\lambda_m \B v_m)\dif t [\B v_m, \B \psi]\dif y \\
            =\int_{E_m^-} \int_0^1 (\partial_z^2 f(x_m+r_m y,T(A_m) +t\lambda_m \B v_m)-\partial_z^2 f(x_0,T(A_0)))\dif t [\B v_m, \B \psi]\dif y  \\
            + \int_{E_m^-}\partial_z^2 f(x_0,T(A_0))[\B v_m, \B \psi]\dif y \overset{m\to \infty}{\longrightarrow} \int_{B_1} \partial_z^2 f(x_0,T(A_0))[\B v, \B \psi]\dif y 
        \end{multline} With \eqref{eq:SupHAsys1}-\eqref{eq:SupHAsys4} we can conclude the following equality \[ \fint_{B_1} \partial_z^2f(x_0,T(A_0))[\B v,\B \psi] \dif x=0,\] which indicates that $v$ solves the following system
        \begin{equation}\label{eq:HAsystem}
            \left\{ \begin{aligned}
            &\B^{\ast}(\partial_z^2 f(x_0,T(A_0))\B v) =0\\
            &\C^{\ast} v=0
            \end{aligned}\right. 
        \end{equation} on the ball $B_{1}$. 
      
    ~\paragraph*{\bf Step 4. Excess decay estimate.} From Theorem \ref{thm:linear_system}, we know that $v \in C^{\infty}(B_1,U)$ and satisfies the following inequality
        \begin{equation}\label{eq:EDharmonicest}
            \sup_{B_{\frac{1}{2}}} \abss{D^{k+1} v}\leq C\fint_{B_{1}} \abss{D^k v - b}\dif x \leq C \brac{\fint_{B_{1}} \abss{D^k v - b}^{p_0}\dif x}^{\frac{1}{p_0}}
        \end{equation} with some constant $C>0$ and any constant vector $b$. For the given $\tau \in (0,\frac{1}{4})$, take $k$-polynomials $\{b_m\}$ such that 
        \begin{align}
        &\int_{B_{2\tau}} D^{i}(v_m-b_m)\dif x =0, \quad 0\leq i \leq k-1, \label{eq:k-affine-ave}\\
        &\quad \mbox{and} \quad D^k b_m = (D^k v_m)_{2{\tau}}. \label{eq:k-affine}
        \end{align} By the convergence $v_m \weakto v$ in $W^{k,2}(B_{1},U)$, we have $b_m$ converges to some $b$ pointwise with 
        \begin{align}
        &\int_{B_{2\tau}} D^{i}(v-b) \dif x =0, \quad 0\leq i \leq k-1, \label{eq:Supk-lim-ave}\\
        &\quad \mbox{and} \quad D^k b = (D^k v)_{2{\tau}}.\label{eq:Supk-lim}
        \end{align}
    \par With \eqref{eq:EDharmonicest}, we can control the mean oscillation of $D^k v$ on $B_{2\tau}$ as follows:
        \begin{align}
             &\fint_{B_{2\tau}} \abss{D^kv-(D^k v)_{2\tau}}^2 \dif x \leq 16\tau^2 \sup_{B_{2\tau}}\abss{D^{k+1} v}^2 \notag\\
            &\leq \ C\tau^2 \brac{\fint_{B_1}\abss{D^k v -(D^k v)_{1}}^{p_0} \dif x}^{2/p_0} \label{eq:HAharexcess_deg1L2}\\
            &\leq\ C\tau^2 \liminf_{m\to \infty} \brac{\fint_{B_1}\abss{D^k v_m -(D^k v_m)_{1}}^{p_0} \dif x}^{2/p_0} \leq C\tau ^2,\notag
        \end{align} where the third line is by the lower semicontinuity of the integral $\int \abss{\cdot}^{p_0}$, the weak convergence $D^{k}v_m \weakto D^{k}v$ in $L^{p_0}(B_1,(U^n)^k)$ and the boundedness of $\{D^k v_m\}$ in $L^{p_0}(B_1,(U^n)^k)$.
   \par The choice of $b_m$ and the fact $(\B v_m)_1 =0$ imply 
        \begin{align*} 
        \lambda_m\abss{D^k b_m}  &=\lambda_m\abss{(D^k v_m)_{2\tau}} =\lambda_m \abss{(D^k v_m)_{2\tau} -(D^k v_m)_1}  \\
        &\leq \frac{1}{(2\tau)^n} \brac{\fint_{B_1}\abss{\lambda_m D^k v_m -(D^k v_m)_1}^{p_0} \dif x}^{\frac{1}{p_0}} \\
        &\leq \frac{C}{(2\tau)^n} \brac{\fint_{B_1}\abss{V(\lambda_m D^k v_m -(D^k v_m)_1)}^{2} \dif x}^{\frac{1}{p_0}}
        \leq \frac{\varepsilon^{1/p_0}}{(2\tau)^{n}} <1 
        \end{align*} when $\varepsilon <(2\tau)^{p_0 n}$. It is then easy to see that \[ \lambda_m (\B v_m(y) -\B b_m(y)) = \B u(x_m+r_m y)-T(A_m) -\lambda_m (\B v_m)_{2\tau}\] with $\abss{T(A_m) +\lambda_m (\B v_m)_{2\tau}}\leq C(\B, M)$. Then \ref{it:Vsum} and \ref{it:Vcvx} in Lemma \ref{lem:Vest}, and Proposition \ref{prop:Caccioppoli_Dk} imply
        \begin{align*}
           &\fint_{B_{\tau}} \abss{V(\lambda_m (D^kv_m -(D^k v_m)_{\tau}))}^2 \dif x \leq C\fint_{B_{\tau}} \abss{V(\lambda_m (D^k(v_m -b_m)))}^2 \dif x \\
           & \hspace{2.8cm}\leq C \sum_{i=0}^{k-1}\fint_{B_{2\tau}} \abs{V\brac{\frac{\lambda_mD^i(v_m-b_m)}{(2\tau)^{k-i}}}}^2 \dif x \\
           & \hspace{2.8cm}+ C\tau r_m \sum_{i=0}^{k-1}\fint_{B_{2\tau}}\brangle{\frac{\lambda_m D^i(v_m-b_m)}{(2\tau)^{k-i}}}^{p-1}\abs{\frac{\lambda_m D^i(v_m-b_m)}{(2\tau)^{k-i}}}\dif x \\
           & \hspace{2.8cm}+C\tau r_m \fint_{2\tau} \brangle{\lambda_m \B(v_m-b_m)}^{p-1}\abss{\lambda_m \B(v_m-b_m)} \dif x =:I+II+III.
        \end{align*} The term $II$ can be estimated as follows
        \begin{align*}
            II &\leq C\tau r_m  \sum_{i=0}^{k-1} \fint_{B_{2\tau}} \brac{\frac{\lambda_m\abss{D^i(v_m-b_m)}}{(2\tau)^{k-i}}+\frac{\lambda_m^p\abss{D^i(v_m-b_m)}^p}{(2\tau)^{p(k-i)}}} \dif x  \\
            &= C\tau r_m  \fint_{B_{2\tau}} (\lambda_m\abss{D^k v_m -(D^k v_m)_{2\tau}}+\lambda_m^p\abss{D^k v_m -(D^k v_m)_{2\tau}}^p)\dif x  \\
            &\leq C\tau^{1-2n}  r_m  \brac{\brac{\fint_{B_{1}} \lambda_m^{p_0}\abss{D^k v_m}^{p_0}\dif x}^{\frac{1}{p_0}} +\fint_{B_{1}} \lambda_m^p\abss{D^k v_m}^p\dif x},
        \end{align*} where the second line follows from Poincar\'{e}'s inequality and the choice of $\{b_m\}$. When $p\geq 2$, by \eqref{eq:HAblowup.01} we know 
        \begin{equation}\label{eq:EDEfinal1}
        II \leq C \tau^{1-2n}r_m (\lambda_m + \lambda_m^2) \leq C \tau^{1-2n}r_m \lambda_m; 
        \end{equation} and if $1<p<2$ it follows from the boundedness of $\{v_m\}$ in $W^{k,p}(B_1,U)$ that
        \begin{equation} \label{eq:EDEfinal2}
        II \leq C \tau^{1-2n}r_m (\lambda_m + \lambda_m^p) \leq C \tau^{1-2n}r_m \lambda_m. 
        \end{equation} The term $III$ can be estimated similarly:
        \begin{equation}\label{eq:EDEfinal3}
        III \leq C \tau^{1-2n}r_m \lambda_m. 
        \end{equation}
    \par To estimate $I$, we control each term ($i=0,\dots,k-1$) with \ref{it:Vsum} in Lemma \ref{lem:Vest}  
        \begin{align*}
            \fint_{B_{2\tau}} \abs{V\brac{\frac{\lambda_m D^i \tilde{v}_m}{(2\tau)^{k-i}}}}^2 \dif x 
            &\leq C \left(\fint_{B_{2\tau}} \abs{V\brac{\frac{\lambda_m D^i (\tilde{v}_m-\tilde{v})}{(2\tau)^{k-i}}}}^2  \dif x \right. \\
            &+\left.\fint_{B_{2\tau}} \abs{V\brac{\frac{\lambda_m D^i \tilde{v}}{(2\tau)^{k-i}}}}^2 \dif x\right):=I_1 +I_2 ,
        \end{align*} where $\tilde{v}_m = v_m-b_m$ and $\tilde{v}=v-b$. It is easy to see $\tilde{v}_m \weakto \tilde{v}$ in $W^{k,p_0}(B_1,U)$. The estimate \ref{it:Vest} in Lemma \ref{lem:Vest}, Poincar\'{e}'s inequality and \eqref{eq:HAharexcess_deg1L2} imply
        \begin{equation}\label{eq:EDEfinal4}
            I_2 \leq C\lambda_m^2 \fint_{B_{2\tau}} \frac{\abss{D^i \tilde{v}}^2}{(2\tau)^{2(k-i)}} \dif x \leq C\lambda_m^2 \fint_{B_{2\tau}} \abss{D^k v-(D^k v)_{2\tau}}^2 \dif x \leq C \lambda_m^2 \tau^2.
        \end{equation}
        Take $\theta \in (0,1)$ such that \[ \frac{1}{2} = \theta + \frac{1-\theta}{2(1+\sigma)},\] where $\sigma$ is as in Theorem \ref{thm:Vpoincare}. Then with H\"{o}lder's inequality we have
        \begin{align*}
            I_1 &\leq C\brac{\fint_{B_{2\tau}} \abs{V\brac{\frac{\lambda_m D^i (\tilde{v}_m-\tilde{v})}{(2\tau)^{k-i}}}}  \dif x}^{2\theta} \brac{\fint_{B_{2\tau}} \abs{V\brac{\frac{\lambda_m D^i (\tilde{v}_m-\tilde{v})}{(2\tau)^{k-i}}}}^{2(1+\sigma)}  \dif x }^{\frac{1-\theta}{1+\sigma}} \\
            &\eqqcolon E_1^{2\theta}\cdot E_2^{1-\theta}.
        \end{align*} Apply \ref{it:Vest} in Lemma \ref{lem:Vest} again to obtain
        \[ E_1 \leq C \lambda_m \fint_{B_{2\tau}} \frac{\abss{D^i(\tilde{v}_m-\tilde{v})}}{(2\tau)^{k-i}} \dif x = \lambda_m \tau^{-k}\cdot o(1) \] as $m \to \infty$. The term $E_2$ can be controlled with Theorem \ref{thm:Vpoincare}:
        \begin{align*}
            E_2 &\leq C \fint_{B_{6\tau}} \abss{V(\lambda_m D^k(\tilde{v}_m-\tilde{v})}^2 \dif x\\
            &\leq C\fint_{B_{6\tau}} (\abss{V(\lambda_m D^k v_m)}^2 + \abss{V(\lambda_m D^k (b_m-b))}^2 +\abss{V(\lambda_m D^k v)}^2) \dif x \\
            &\leq C\brac{\tau^{-n} \fint_{B_1} \abss{V(\lambda_m D^k v_m)}^2 \dif x + \lambda_m^2 \fint_{B_{6\tau}} \abss{D^k (b_m-b)}^2 \dif x + \lambda_m^2\fint_{B_{6\tau}} \abss{D^k v}^2 \dif x}\\
            &\leq C\lambda_m^2 (\tau^{-n} +o(1)) + C\lambda_m^2 \fint_{B_{6\tau}} \abss{D^k v}^2 \dif x.
        \end{align*} With \eqref{eq:EDharmonicest} it is easy to obtain
        \begin{align*}
        \fint_{B_{6\tau}} \abss{D^k v}^2 \dif x &\leq 2\fint_{B_{6\tau}} \abss{D^k v-(D^k v)_{\frac{1}{2}}}^2 \dif x + 2\abss{(D^k v)_{\frac{1}{2}}}^2 \\
        &\leq C \brac{\fint_{B_1} \abss{D^k v}^{p_0} \dif x}^{\frac{2}{p_0}} \leq C,
        \end{align*} which helps us conclude 
        \begin{equation}\label{eq:EDEfinal5}
        I_1 \leq \lambda_m^2 \tau^{-2\theta k}(\tau^{-n}+1)^{1-\theta}\cdot o(1) 
        \end{equation} as $m\to \infty$. Combining \eqref{eq:EDEfinal1}-\eqref{eq:EDEfinal5} and $r_m<\lambda_m^{\frac{1}{\alpha}}$, we have 
        \begin{multline}
            \fint_{B_{\tau}} \abss{V(\lambda_m D^k v_m-(D^k v_m)_{\tau})}^2 \dif x \\
            \leq C\lambda_m^2(\tau^{1-2n}\lambda_m^{\frac{1}{\alpha}-1} +\tau^{-2\theta k}(\tau^{-n}+1)^{\theta}\cdot o(1)) \leq C_1 \lambda_m^2 \tau^{2\alpha}
        \end{multline} when $\varepsilon (\geq \lambda_m^2)$ is taken to be small enough and $m$ is large enough. This contradicts \eqref{eq:HAblowup.02} if we take $C_0>C_1+1$. 
     The proof of the proposition is now complete.
    \end{proof}


\printbibliography
\end{document}